\title{\sc Complete Low Frequency Asymptotics 
for Time-Harmonic Generalized Maxwell Equations in Nonsmooth Exterior Domains}
\def\shorttitle{Low Frequency Asymptotics for Maxwell's Equations}
\def\pauthor{Dirk Pauly}
\def\mylabelonoff{off}
\def\allowdisbrk{no}
\author{{\sf\pauthor}}
\markboth{\pauthor}{\shorttitle}
\numberwithin{equation}{section}
\newenvironment{acknow}{{\vspace*{1cm}\noindent\bf Acknowledgements }}{}
\newcommand{\bewboxw}{\mbox{}\hfill $\square$ \\}
\newenvironment{proof}{{\noindent\bf Proof }}{\bewboxw}
\newcommand{\keywords}[1]{{\noindent\bf Key Words }#1}
\newcommand{\amsclass}[1]{{\noindent\bf AMS MSC-Classifications }#1}
\newcommand{\mylabel}[1]{\label{#1}\fbox{{\rm #1}}}}{\newcommand{\mylabel}[1]{\label{#1}\makebox[0mm][]{}}}
\newcommand{\paper}[7]{\bibitem{#1} #2, `#3', {\it #4}, #5, (#6), #7.}
\newcommand{\book}[6]{\bibitem{#1} #2, {\it #3}, #4, #5, (#6).}
\newcommand{\dissavail}[6]{\bibitem{#1} #2, `#3', {\sf Dissertation}, #4, (#5), available from {\tt #6}.}
\newcommand{\repjyu}[8]{\bibitem{#1} #2, `#3', {\it Reports of the Department of Mathematical Information Technology}, 
University of Jyv\"askyl\"a, Series #4. Scientific Computing, No. #4. #5/#6, ISBN #7, ISSN #8.}
\newcommand{\schluss}{\ifodd\value{page}\newpage\thispagestyle{empty}\makebox[0mm][]{}\color{sehrhell}.\fi

\newcommand{\cRqdom}{\cR^q(\dom)}
\newcommand{\Loesom}{\cS_\omega}

\newcommand{\paulytimeharmetadef}{\cite[(2.1), (2.2), (2.3)]{paulytimeharm}}
\newcommand{\paulytimeharmdefeps}{\cite[Definition 2.1 and 2.2]{paulytimeharm}}
\newcommand{\paulytimeharmmlcpdef}{\cite[Definition 2.4]{paulytimeharm}}
\newcommand{\paulytimeharmtheofred}{\cite[Theorem 3.5]{paulytimeharm}}
\newcommand{\paulytimeharmcorfred}{\cite[Corollary 3.9]{paulytimeharm}}
\newcommand{\paulytimeharmPzero}{\cite[Lemma 5.2]{paulytimeharm}}
\newcommand{\paulytimeharmPzeroiv}{\cite[Lemma 5.2 (iv)]{paulytimeharm}}
\newcommand{\paulytimeharmPzeroiii}{\cite[Lemma 5.2 (iii)]{paulytimeharm}}
\newcommand{\paulytimeharmcorlomcont}{\cite[Corollary 5.5]{paulytimeharm}}
\newcommand{\paulytimeharmpartint}{\cite[(2.5)]{paulytimeharm}}
\newcommand{\paulytimeharmsecthprob}{\cite[section 3]{paulytimeharm}}
\newcommand{\paulytimeharmsecboundary}{\cite[section 6]{paulytimeharm}}
\newcommand{\paulytimeharmsecasym}{\cite[section 5]{paulytimeharm}}
\newcommand{\paulytimeharmfundrep}{\cite[Theorem 5.1]{paulytimeharm}}

\newcommand{\paulystaticsectower}{\cite[section 2]{paulystatic}}
\newcommand{\paulystaticsecgenstatic}{\cite[section 4]{paulystatic}}
\newcommand{\paulystatictheoaltmax}{\cite[Theorem 4.6]{paulystatic}}
\newcommand{\paulystaticintdirichleteps}{\cite[Lemma 3.8]{paulystatic}}
\newcommand{\paulystaticbFormendirichletsenkrecht}{\cite[section 4]{paulystatic}}
\newcommand{\paulystaticinttower}{\cite[Remark 2.5]{paulystatic}}
\newcommand{\paulystaticitloes}{\cite[Theorem 5.10]{paulystatic}}
\newcommand{\paulystaticitloesrem}{\cite[Remark 5.11]{paulystatic}}
\newcommand{\paulystaticitloescor}{\cite[Corollary 5.12]{paulystatic}}
\newcommand{\paulystaticitloesremcor}{\cite[Theorem 5.10, Remark 5.11, Corollary 5.12]{paulystatic}}
\newcommand{\paulystaticremodd}{\cite[Remark 2.4]{paulystatic}}
\newcommand{\paulystaticsecordlemone}{\cite[Lemma 7.1]{paulystatic}}
\newcommand{\paulystaticsecordrem}{\cite[Remark 7.4]{paulystatic}}
\newcommand{\paulystaticsphharmexpa}{\cite[Theorem 2.6]{paulystatic}}
\newcommand{\paulystaticcoefesti}{\cite[Remark 2.2 (v)]{paulystatic}}

\newcommand{\paulydecosecres}{\cite[section 3]{paulydeco}}
\newcommand{\paulydecolemb}{\cite[Lemma 3.1]{paulydeco}}
\newcommand{\paulydecotheomain}{\cite[Theorem 3.2]{paulydeco}}
\newcommand{\paulydecolemtheomain}{\cite[Lemma 3.1, Theorem 3.2]{paulydeco}}
\newcommand{\paulydecotheomaindeltaeps}{\cite[Theorem 3.2 (iv)]{paulydeco}}

\begin{document}

\date{2007}
\maketitle{}

\begin{abstract}
We continue the study of the operator of generalized Maxwell equations and
completely discover the behavior of the solutions of the time-harmonic equations
as the frequency tends to zero. Thereby we identify degenerate operators in terms
of special `polynomially growing' solutions of a corresponding static problem,
which must be added to the `usual' Neumann series in order to describe the
low frequency asymptotic adequately.\\
\keywords{low frequency asymptotics, Maxwell's equations, exterior boundary value problems,
variable coefficients, electro-magneto static, Hodge-Helmholtz decompositions,
radiating solutions, asymptotic expansions, spherical harmonics, Hankel functions}\\
\amsclass{35Q60, 78A25, 78A30}
\end{abstract}

\tableofcontents

\section{Introduction and main results}

We continue our studies started in \cite{paulytimeharm} and \cite{paulystatic}
on the low frequency behaviour of the solution operator
$$\Abb{\loesom}{\Lzqqpeom{s}}{\Lzqqpeom{t}}{\FG}{\EH}\qqtext{,}\forall\quad s,-t>1/2$$
of the generalized\footnote{Here `generalized' means the framework of alternating differential forms.}
time-harmonic Maxwell equations
$$\pdiv H+\ie\omega\eps E=F\qqtext{,}\rot E+\ie\omega\mu H=G\qquad,$$
shortly written as
\beq(M+\ie\omega\Lambda)\EH=\FG\qquad,\mylabel{Maxeq}\eeq
with homogeneous (Dirichlet) tangential or electric boundary condition
$$\gt E=0\qquad,$$
which models a perfect conductor, and (Maxwell) radiation conditions
$$(-1)^{qN}*\pd r\wedge*H+E\in\Lzqom{\hat{t}}\qtext{,}\pd r\wedge E+H\in\Lzqpeom{\hat{t}}
\qtext{,}\hat{t}>-1/2\quad,$$
shortly written as
$$(\tilde{S}+\id)\EH\in\Lzqqpeom{\hat{t}}\qquad,$$
in an exterior domain $\om\subset\rN$ with $N\in\nz$ (i.e. a connected open set with compact complement).
Here $r(x)=|x|$ for $x\in\rN$ and $\wedge$ denotes the exterior product as well as
$*$ the usual Hodge star isomorphism.
$\gt:=\iota^*$ denotes the tangential trace and
$\iota:\dom\hookrightarrow\ol{\om}$ the natural embedding of the boundary.
We note that the range of $\loesom$ is even contained in $\Ronqtom\times\Dqpetom$\,.
Here $\EH$\,, $\FG$ are pairs of alternating differential forms of rank $q$ resp. $q+1$\,. Moreover,
we introduce the matrix-type operators acting on such pairs
$$M=\zmat{0}{\pdiv}{\rot}{0}\qqtext{,}\Lambda=\zmat{\eps}{0}{0}{\mu}
\qqtext{,}\tilde{S}=\zmat{0}{\tilde{T}}{\tilde{R}}{0}$$
with $\tilde{T}=(-1)^{qN}*\tilde{R}*$ and $\tilde{R}=\pd r\wedge$\,.
Following Weyl \cite{weyl} and to remind of the electro-magnetic background it has become customary
to denote the exterior derivative $\pd$ by $\rot$ and the co-derivative $\delta$ by $\pdiv$\,.
Hence on $(q+1)$-forms we have
$$\pdiv=(-1)^{qN}*\rot*\qquad.$$
We note in passing that the differential operators $\rot$\,, $\pdiv$\,, $M$ and the `multiplication'
operators $\tilde{R}$\,, $\tilde{T}$\,, $\tilde{S}$ are related to each other by at least two properties:
For any smooth function $\varphi$ we have
$$C_{\rot,\varphi(r)}=\varphi'(r)\tilde{R}\qqtext{,}C_{\pdiv,\varphi(r)}=\varphi'(r)\tilde{T}
\qqtext{,}C_{M,\varphi(r)}=\varphi'(r)\tilde{S}\qquad,$$
where $C_{A,B}$ denotes the commutator of the two operators $A$ and $B$\,,
as well as $\rot$\,, $\pdiv$\,, $M$ correspond in the Fourier image to
$\ie r\tilde{R}$\,, $\ie r\tilde{T}$\,, $\ie r\tilde{S}$\,.
Furthermore, the frequencies $\omega$ will be taken from
the upper half plane $\czp=\set{z\in\cz}{\imt z\geq0}$\,. We define
$$\qLzom{q,p}{s}:=\Lzqom{s}\times\qLzom{p}{s}\qqtext{,}q,p\in\zz\qtext{,}s\in\rz\qquad,$$
where $\Lzqsom$ is the Hilbert space of all differential forms $E\in\Lzqlocom$
with $\rho^sE\footnote{Here $\rho:=\rho(r):=(1+r^2)^{1/2}$\,.}\in\Lzqom{}$\,,
equipped with the scalar product
$$\skp{E}{H}_{\Lzqsom}:=\intom\rho^{2s}E\wedge*\bar{H}\qquad,$$
where the bar denotes complex conjugation.

We are going to model inhomogeneous, anisotropic and nonsmooth media
by linear $\text{\rm L}^\infty$-transformations $\eps$ and $\mu$ on $q$- and $(q+1)$-forms,
i.e. dielectricity and permeability.
Moreover, our right hand sides $\FG$ from $\Lzqqpeom{s}$ under consideration do not have to be
necessarily compactly supported.

The study of wave scattering at low frequencies was pioneered by Lord Rayleigh \cite{rayleigh}.
His contributions provide the foundation on which almost all subsequent work is based.
Low frequency asymptotics for Maxwell's boundary value problem have been given, for instance,
by M\"uller and Niemeyer \cite{muellernie}, Stevenson \cite{stevenson}, Kleinman \cite{kleinman},
Werner \cite{wernere,wernerz,wernerd,wernerv,werners}, Kress \cite{kress}, Ramm \cite{ramm},

Kriegsmann and Reiss \cite{kriegsmann}, Ramm, Weaver, Weck and Witsch \cite{dissmax},
Athanasiadis, Costakis and Stratis \cite{athanasiadissolvandlowfreq}
as well as by Picard \cite{asymmax} and Weck and Witsch \cite{dissmaxbd}.
We also should mention the book of Dassios and Kleinman \cite{dassios}.
Ramm and Somersalo \cite{rammsomersalo} and Lassas \cite{lassas} considered the low frequency limit
from the point of view of inverse problems.

In none of the works cited above the calculation of the higher order terms in a suitable
expansion in terms of the frequency is analyzed.
Ammari and N\'ed\'elec proved in \cite{ammarinedelecondes,ammarinedelecscatt} such expansions.
They reformulated the exterior boundary value problem in a truncated bounded domain
using an `exterior electromagnetic operator',
called by Monk \cite{monk} the `electric to magnetic Calderon operator'
or by Colton and Kress \cite{coltonkress} the `electric to magnetic boundary component map',
which is the counterpart of the Dirichlet to Neumann operator
for Helmholtz' equation. Unfortunately due to the asymptotic expansion of the exterior electromagnetic
operator their method requires exact nonlocal radiation conditions, which leads to nonlocal
boundary conditions on a sphere. Thus it is not possible to identify the expected Neumann series part
of the corresponding static solution operator in the solution. Furthermore, they discuss only the case
$$\big(\zmat{0}{-\curl}{\curl}{0}-\ie\omega(\id+\Lambdad)\big)\EH=-\ie\omega\Lambdad\FG\qquad,$$
where $\FG$ is the time independent part of a time-harmonic incoming wave and $\Lambdad$ some
compactly supported perturbation.

To overcome these problems and limitations and to identify the `usual' Neumann series part of a static solution operator
Weck and Witsch started in \cite{lowfreqlimit,asyanal,exacttwo,exacthigher} a detailed analysis
of the low frequency behavior of solutions of Helmholtz' equation.
In \cite{complete} their new method was completed.
They identified degenerate correction operators in terms of special `polynomially growing'
solutions of a corresponding static problem, which must be added to the `usual' Neumann series
in order to describe the low frequency asymptotic adequately.
With the help of \cite{sphharm}, where a calculus in spherical coordinates suited for differential forms
has been established, they were able to apply and extend their methods
to the case of generalized linear elasticity.

Now in this paper we transfer their method to the case of generalized Maxwell equations.
Hereby we again utilize \cite{sphharm} as an important tool and also the preliminary works
\cite{paulytimeharm,paulystatic,paulydeco}. Thus throughout this paper we will use
the notations introduced in these papers.

All these low frequency investigations are not only motivated by the problem in its own right,
but also by its applications to the large time behavior of solutions to the initial boundary value
problems for the (generalized) wave equation
and to the existence proofs for nonlinear (generalized) wave equations.
In this context we may refer to Eidus' principle of limiting amplitude \cite{eiduslampl}
and, for instance, the papers of Werner \cite{wernerf} as well as Morgenr\"other and Werner
\cite{morgenwernere,morgenwernerz}.

In \cite{paulytimeharm} we studied the time-harmonic solutions of \eqref{Maxeq}.
Since the linear operator
$$\Abb{\calM}{\Ronqom{}\times\Dqpeom{}\subset{}_\Lambda\Lzqqpeom{}}{{}_\Lambda\Lzqqpeom{}}{\EH}{\ie\Lambda^\me M\EH}$$
is selfadjoint, we were able to obtain radiation solutions $\EH$ for nonvanishing real frequencies
and right hand sides $\FG\in\Lzqqpeom{>\frac{1}{2}}$ by means of Eidus' limiting absorption principle
\cite{eidusla} (approaching from the upper half plane $\czp$).
These solutions are elements of $\Lzqqpeom{<\meh}$ and satisfy the Maxwell radiation condition, i.e.
$(\tilde{S}+\id)\EH\in\Lzqqpeom{>\meh}$
\big(see \paulytimeharmsecthprob\, for details\big).
In other words the resolvent $(\calM-\omega)^\me$ of $\calM$
and hence also $\loesom=\ie(\calM-\omega)^\me\Lambda^\me$ may be extended continuously to the real axis.
Then using the fundamental solution of Helmholtz' equation
in the whole space $\rN$ we showed that eventually eigenvalues of $\calM$
do not accumulate even at $\omega=0$\,. This makes $\loesom$ well defined on the whole of $\Lzqqpesom$
for small frequencies $0\neq\omega\in\czp$\,. Finally we proved in \paulytimeharmcorlomcont\,
that $\loesom$ restricted to the closed subspace $\regqnsom$\footnote{See Definition \ref{Regdefi}}
of $\Lzqqpesom$ converges to the static solution operator $\loesn$ as $\omega$ tends to zero
in the norm of bounded linear operators from $\regqnsom$
to $\Ronqtom\times\Dqpetom$ for all $s\in(1/2,N/2)$ and $t<s-(N+1)/2$\,.
Here $\regqnsom$ consists of solenoidal resp. irrotational forms and
$$\Abb{\loesn}{\regqnom{}}{\big(\ronqom{-1}\times\dqpeom{-1}\big)\cap\Lambda^\me\regqnom{-1}}{\FG}{\EH}\qquad,$$
where $\EH\in\Lzqqpeom{-1}$ is the unique solution of the (decoupled) static Maxwell problem
\begin{align*}
\rot E&=G&&,&\pdiv\eps E&=0&&,&\iota^*E&=0&&,&\eps E&\bot\bonqom&&,\\
\pdiv H&=F&&,&\rot\mu H&=0&&,&\iota^*\mu H&=0&&,&\mu H&\bot\bqpeom&&,
\end{align*}
which may shortly be written as
$$\EH\in\big(\ronqom{-1}\times\dqpeom{-1}\big)\cap\Lambda^\me\regqnom{-1}\quad\wedge\quad M\EH=\FG\quad.$$
The special forms from $\bonqom$ resp. $\bqpeom$ possess compact resp. bounded supports in $\om$
and they play the role of the Dirichlet forms $\dhqepsom{}$ resp. $\dH{q+1}{}{\mu^\me}(\om)$\,,
where $\dH{q}{}{\nu}(\om)=\dH{q}{0}{\nu}(\om)$ and for $t\in\rz$ (in classical terms)
$$\dH{q}{t}{\nu}(\om)=\setb{E\in\Lzqtom}{\rot E=0\,,\,\pdiv\nu E=0\,,\,\iota^*E=0}\qquad.$$
Due to the existence of a nontrivial kernel of the Maxwell operator
these (or other) orthogonality constraints are necessary.

In the bounded domain case it is just an easy exercise to show that $\loesom$
is approximated by Neumann's series of $\loesn$ or $\loes=\Lambda\loesn$ for small frequencies $\omega$\,, i.e.
\beq\loesom=-(-\ie\omega)^\me\Pi+\sum_{j=0}^{\infty}(-\ie\omega)^j\loesn\loes^{j}\Pi_{\reg}\qquad,\mylabel{bddom}\eeq
where $\Pi$ and $\Pi_{\reg}=\id-\Lambda\Pi$ are projections onto the kernel of $M$
and its orthogonal complement in $\Lzqqpeom{}$\,.
In the case of an exterior domain this low frequency asymptotic holds no longer true,
because due to Poincare's estimate for Maxwell equations
the static solution operator maps data from a polynomially weighted Sobolev space to solutions
belonging to a less weighted Sobolev space. So a priori it is not clear, in which way one
may define higher powers of a static solution operator.

In \cite{paulystatic} we took care of some electro-magneto static problems and were able to prove
that an (not obvious and relatively complicated)
iteration process of a suitable static solution operator $\loes$ still holds true \paulystaticitloes.
This gives meaning to the powers $\loes^j$ of $\loes$
as continuous linear operators on subspaces of $\regqnom{\loc}$ even for exterior domains.
As a byproduct we proved a generalized spherical harmonics expansion suited for Maxwell equations,
which will be used frequently in this paper as well.

Now in this paper, which is the third and last one of our little series,
we analyze the solution formula \eqref{bddom} and try to give meaning to it in exterior domains
in the sense of an asymptotic expansion
\beq\loesom+(-\ie\omega)^\me\Pi-\sum_{j=0}^{\fJ-1}(-\ie\omega)^j\loesn\loes^{j}\Pi_{\reg}=\calO\big(|\omega|^\fJ\big)\qtext{,}\fJ\in\nzn\quad.\mylabel{exdom}\eeq
Thereby we follow closely the ideas of Weck and Witsch from \cite{complete} and \cite{linelae,linelaz}.
Due to our exterior boundary value problems there arise three major complications:

\begin{enumerate}
\item With growing $\fJ$ we have to use stronger norms for the data and obtain estimates in weaker norms
for the solutions.
\item As $\Pi$ and $\Pi_{\reg}$ already indicate we need weighted Hodge-Helmholtz decompositions
of $\Lzqqpesom$ respecting inhomogeneities $\Lambda$\,. In \cite{paulydeco} we presented results,
which will meet our needs. In fact we proved topological direct decompositions
$$\Lzqqpesom=\big(\Lambda\triqsom\dotplus\regqom{-1}{s}\big)\cap\Lzqqpesom\qquad,$$
where $\triqsom=\Pi\,\Lzqqpesom$ and $\regqom{-1}{s}=\Pi_{\reg}\Lzqqpesom$\,. We note
\begin{align*}
\triqsom&\subset\ronqtnom\times\dqpetnom\qquad,\\
\regqom{-1}{s}&\subset\regqntom\subset\dqtnom\times\ronqpetnom
\end{align*}
are only subspaces of $\Lzqqpetom$ with $t\leq s$ and $t<N/2$ and even \ul{not}
of $\Lzqqpesom$ if $s\geq N/2$\,. \big(See Lemma \ref{decolem}, \cite{paulydeco}\big)
\item We have to correct \eqref{exdom} by special operators $\Gamma_j$\,.
\end{enumerate}
More precisely for $\fJ\in\nzn$ and $s,-t>1/2$ we shall look for asymptotic estimates like
\begin{align}\begin{split}
&\qquad\big|\!\big|\loesom\FG+(-\ie\omega)^\me\Pi\FG-\sum_{j=0}^{\fJ-1}(-\ie\omega)^j\loesn\loes^{j}\Pi_{\reg}\FG\\
&-\sum_{j=0}^{\fJ-N}(-\ie\omega)^{j+N-1}\Gamma_j\FG\big|\!\big|_{\Lzqqpetom}
=\calO\big(|\omega|^\fJ\big)\normb{\FG}_{\Lzqqpesom}\qquad.
\end{split}\mylabel{asymcor}\end{align}
Hereby the $\calO$-symbols are always meant for frequencies $\omega\to0$
and uniformly with respect to $\omega\in\czpomd\ohne\{0\}$ and $\FG$\,,
where $\czpomd:=\setb{\omega\in\czp}{|\omega|\leq\hat{\omega}}$ for some $\hat{\omega}>0$\,.

Throughout this paper we will make the following\vspace*{2mm}\\
{\bf General Assumptions:}
\begin{itemize}
\item We restrict our considerations to ranks of forms
$$1\leq q\leq N-2$$
and \ul{odd} space dimensions $\nz\ni N\geq3$\,. Hence of course the most interesting case
of the classical Maxwell equations, $N=3$ and $q=1$\,, is covered.
The treatment of even dimensions (especially $N=2$) would
increase the complexity of our calculations considerably due to the appearance of logarithmic
terms in the fundamental solution for Helmholtz' equation (Hankel functions).
But there is no reasonable doubt that our methods can be used to obtain similar results
in any even dimension as well.
\item We fix a radius $r_0>0$ and some radii $r_n:=2^nr_0$\,, $n\in\nz$\,, such that
$\rN\ohne\Omega\subset U_{r_0}$\,. Moreover, we remind of the cut-off functions
$\mbox{\boldmath$\eta$}$\,, $\hat{\eta}$ and $\eta$ from {\paulytimeharmetadef}.
$\eta$ satisfies $\supp\eta=\ol{A_{r_1}}$\,, $\supp\nabla\eta=\ol{A_{r_1}\cap U_{r_2}}$\,.
Here $U_r:=\setb{x\in\rN}{|x|<r}$ and $A_r:=\setb{x\in\rN}{|x|>r}$ for $r>0$\,.

\item For simplicity $\Omega\subset\rN$ may have a Lipschitz boundary.
In fact, $\om$ only needs to have the Maxwell local compactness property {\sf MLCP}
from \paulytimeharmmlcpdef, i.e. the inclusions
$$\Ronqom{}\cap\Dqom{}\hookrightarrow\Lzqloc(\omq)$$
have to be compact for all $q$\,, as well as the static Maxwell property {\sf SMP}
from \paulystaticsecgenstatic, i.e.
the existence of special forms $\bonqom$ and $\bqpeom$ must be guaranteed.
Anyhow, Lipschitz domains possess these properties.
\item We assume $\eps=\id+\epsd$ and $\mu=\id+\mud$ to be $\tau$-$\pc{1}$-admissible
\big(see \paulytimeharmdefeps\big) linear transformations on $q$- resp. $(q+1)$-forms
with some rate of decay $\tau>0$\,, which will vary throughout this paper.
The greek letter $\tau$ always stands for the order of decay of the perturbations $\epsd$ and $\mud$\,.
Clearly we then have $\Lambda=\id+\Lambdad$\,.
Hence our transformations may have $\text{\rm L}^\infty$-entries,
which are only assumed to be $\pc{1}$ in $A_{r_0}$ and asymptotically homogeneous, i.e.
$\p^\alpha\Lambdad$ decays like $r^{-\tau-|\alpha|}$ for all multi-indices $\alpha$ with $|\alpha|\leq1$
with some order $\tau$ at infinity.
\end{itemize}

We may describe our results briefly as follows:

\begin{enumerate}
\item We shall identify \ul{de}g\ul{enerate} correction operators $\Gamma_j$ by a recursion which involves only
special solutions $\Esm$ and $\Hsn$ as well as their powers $\Esmk=\loes^{k}\Lambda(\Esm,0)$ and
$\Hsnk=\loes^{k}\Lambda(0,\Hsn)$ of $\loes$
of our homogeneous static boundary value problems
\begin{align*}
\rot\Esm&=0&&,&\pdiv\eps\Esm&=0&&,&\iota^*\Esm&=0&&,&\eps\Esm&\bot\,\bonqom&&,\\
\pdiv\Hsn&=0&&,&\rot\mu\Hsn&=0&&,&\iota^*\mu\Hsn&=0&&,&\mu\Hsn&\bot\,\bqpeom&&,
\end{align*}
but with inhomogeneities at infinity, namely
\begin{align*}
\Esm&-\turmd{q}{0}{\sigma}{m}{+}&&\text{'decays', i.e. belongs to}&&\Lzqom{>-\Nh}&&,\\
\Hsn&-\turmr{q+1}{0}{\sigma}{n}{+}&&\text{'decays', i.e. belongs to}&&\Lzqpeom{>-\Nh}&&,
\end{align*}
where the special growing tower forms $\turmd{q}{k}{\sigma}{m}{+}$ and $\turmr{q+1}{k}{\sigma}{n}{+}$
from \paulystaticsectower\, behave like $r^{k+\sigma}$ at infinity.
\big(See Lemma \ref{EsmHgnlemma}, Remark \ref{EsmHgnlemmarem}, Lemma \ref{loesEsmHgnlemma},
Definition \ref{asymsatzlokaldef}, Definition \ref{asymsatzlokalzweidef}\big)
\item On the `trivial' subspace $\triqsom$ the solution operator $\loesom$
behaves like the division by the frequency, i.e.
$$\ie\omega\loesom\Lambda\FG=\FG\qqtext{,}\forall\quad\FG\in\triqsom\qquad.$$
\big(See \eqref{loesompi}\big)
\item We shall identify closed subspaces $\regqJsom$ of $\Lzqqpesom$ \big(and of $\regqnsom$\big)\,,
the `spaces of regular convergence', for whose elements $\FG$ the `usual' Neumann expansion
$$\big|\!\big|\loesom\FG-\sum_{j=0}^{\fJ-1}(-\ie\omega)^j\loesn\loes^{j}\FG\big|\!\big|_{\Lzqqpetom}
=\calO\big(|\omega|^\fJ\big)\normb{\FG}_{\Lzqqpesom}$$
holds true. We are also able to characterize the spaces of regular convergence by orthogonality relations
with the aid of the special static solutions $\Esmk$ and $\Hsnk$\,.
\big(See Theorem \ref{Regsatz}, Lemma \ref{Regortho}\big)
\item For $\FG\in\regqom{-1}{s}$ we obtain the corrected Neumann expansion
$$\big|\!\big|\loesom\FG-\sum_{j=0}^{\fJ-1}(-\ie\omega)^j\loesn\loes^{j}\FG
-\sum_{j=0}^{\fJ-N}(-\ie\omega)^{j+N-1}\Gamma_j\FG\big|\!\big|_{\Lzqqpetom}$$
$$=\calO\big(|\omega|^\fJ\big)\normb{\FG}_{\Lzqqpesom}$$
and for general $\FG\in\Lzqqpesom$ we get the fully corrected Neumann expansion \eqref{asymcor}.
(See Main Theorem)
\item Concerning our media $\Lambda=\id+\Lambdad$
we shall distinguish between two kinds of assumptions on our inhomogeneities:
\begin{enumerate}
\item $\Lambdad$ has compact support. Then we always may choose $r_0$ in a way,
such that $\supp\Lambdad\subset U_{r_0}$\,.
\item $\Lambdad$ `decays' with a rate $\tau>0$ at infinity
in the sense of $\Lambda$ is $\tau$-$\pc{1}$-admissible.
\end{enumerate}
In the first case our results will hold for any $\fJ$\,, whereas in the second case only $\fJ\leq\hat{\fJ}$
with some $\hat{\fJ}$ depending on $\tau$ are allowed.
\end{enumerate}

Due to \cite{paulystatic}, \cite{sphharm} and originally \cite{mcowen} we have to exclude
a discrete set of `bad' weights, namely
$$\pI:=(\nzn+N/2)\cup(1-N/2-\nzn)=\set{n+N/2,1-n-N/2}{n\in\nzn}\quad.$$
Our main result of this paper reads as follows:

\paulesatz{Main Theorem}{0}{1.5}{5}{5}{
Let $\fJ\in\nz$ and $s\in\rz\ohne\pI$ as well as
\begin{align*}
s&>\fJ+1/2\qquad,\\
t&<\min\{N/2-\fJ-2\,,\,-1/2\}\qquad,\\
\tau&>\max\big\{(N+1)/2\,,\,s-t\big\}\qquad.
\end{align*}
Then there exists some $\hat{\omega}>0$\,, such that the asymptotic
$$\loesom+(-\ie\omega)^\me\Pi-\sum_{j=0}^{\fJ-1}(-\ie\omega)^j\loesn\loes^{j}\Pi_{\reg}
-\sum_{j=0}^{\fJ-N}(-\ie\omega)^{j+N-1}\Gamma_j=\calO\big(|\omega|^\fJ\big)$$
holds uniformly with respect to $\czpomd\ohne\{0\}\ni\omega\to0$ in the norm of bounded linear operators
from $\Lzqqpesom$ to $\Lzqqpetom$\,.
This asymptotic also holds true in the special case $\fJ=0$\,,
if we replace the assumptions on $t$ and $\tau$ by $t\leq s-(N+1)/2$ and $t<-1/2$ as well as
$\tau>\max\big\{(N+1)/2,s+1-N/2\big\}$\,.
}

We note that the correction operators $\Gamma_j$ only occur for asymptotic orders $\fJ\geq N$\,.

\paulesatz{Remark A}{0}{1.5}{5}{0}{
These asymptotics remain valid even in stronger norms.
In the norm of bounded linear operators from $\Lzqqpesom$ to $\Ronqtom\times\Dqpetom$ we obtain 
the desired estimate if we assume additionally $t\leq s-(N+1)/2$ in the case $\fJ=1$\,.
In the (strongest) norm of bounded linear operators from $\Lzqqpesom$ to $\ronqtom\times\dqpetom$
the asymptotic estimate holds true if we assume additionally $t<-3/2$
and moreover $t\leq s-(N+3)/2$ if $\fJ\in\{0,1\}$\,.
}

\paulesatz{Remark B}{0}{1.5}{5}{0}{
Using the estimate (ii) instead of (i) from Theorem \ref{Regsatz}
during our considerations in section \ref{asymsec} we would
achieve asymptotics with the small $o$-symbol instead of $\calO$\,.
As an example we obtain (for nearly the same $s$\,, $t$ and $\tau$)
$$\loesom+(-\ie\omega)^\me\Pi-\sum_{j=0}^{\fJ}(-\ie\omega)^j\loesn\loes^{j}\Pi_{\reg}
-\sum_{j=0}^{\fJ-N+1}(-\ie\omega)^{j+N-1}\Gamma_j=o\big(|\omega|^\fJ\big)\quad.$$
}

Choosing here $\fJ=1$ we may easily conclude the differentiability of $\loesom$ in $\omega=0$
as an operator acting on $\regqom{-1}{s}=\Pi_{\reg}\Lzqqpesom$\,. We obtain

\paulesatz{Corollary}{0}{1.5}{5}{0}{
Let $s\in(3/2,\infty)\ohne\pI$\,, $t<\min\{N/2-3,-1/2\}$ and $\tau>\max\big\{(N+1)/2\,,\,s-t\big\}$\,. Then
$$\czpomd\ni\omega\longmapsto\ie\loesom\in B\big(\regqom{-1}{s},\Ronqtom\times\Dqpetom\big)$$
is differentiable in $\omega=0$ with derivative $\Lambda^\me\loes^2$\,.
}

\paulesatz{Remark C}{0}{1.5}{5}{0}{
Formally our solution of \eqref{Maxeq} satisfies the perturbed Helmholtz type equation
$$\Big(\zmat{\eps^\me\pdiv\mu^\me\rot}{0}{0}{\mu^\me\rot\eps^\me\pdiv}+\omega^2\Big)\EH
=(\Lambda^\me M-\ie\omega)\Lambda^\me\FG$$
and
$$\ie\omega\pdiv\eps E=\pdiv F\qqtext{,}\ie\omega\rot\mu H=\rot G\qquad,$$
which imply
$$\Big(\zmat{\eps^\me\pdiv\mu^\me\rot+\rot\pdiv\eps}{0}{0}{\mu^\me\rot\eps^\me\pdiv+\pdiv\rot\mu}
+\omega^2\Big)\EH$$
$$=\Big(\frac{1}{\ie\omega}\zmat{\rot\pdiv}{0}{0}{\pdiv\rot}\Lambda+\Lambda^\me M-\ie\omega\Big)
\Lambda^\me\FG\qquad.$$
We note $\Delta=\pdiv\rot+\rot\pdiv$\,.
Due to this formula and under certain regularity restrictions for $\FG$
the cases $q=0$ and $q=N-1$ are equivalent to scalar (perturbed) Helmholtz problems for $E$ and $H$\,,
since $E$ for $q=0$ and $H$ for $q=N-1$ are scalar functions.
The first case $q=0$\,, i.e. a Helmholtz equation with homogeneous Dirichlet boundary condition for $E$\,,
has already been discussed in \cite{complete} or \cite{linelaz} and
even for the most complicated case $N=2$ in \cite{peter}.
The other case $q=N-1$ corresponds to a Helmholtz equation with homogeneous or inhomogeneous
Dirichlet boundary condition $\ie\omega\mu H=G$ on $\p\om$ for $H$
and can be handled analogously to the case $q=0$ using an adequate extension operator.

However, also in the case $q=N-1$ our techniques work.
The only difference is that now some exceptional `tower forms' occur.
Due to their appearance we have to tackle some additional difficulties
and the correction operators occur already at the power $\omega^{N-2}$ instead of $\omega^{N-1}$
in the case $1\leq q\leq N-2$\,. At this point we note in passing
that for more regular data from $\regqnsom$ the correction operators appear primarily
at the power $\omega^{N}$ for $1\leq q\leq N-2$ and at $\omega^{N-1}$ if $q=N-1$\,.
}

\paulesatz{Remark D}{0}{1.5}{5}{0}{
We easily obtain a low frequency asymptotic for inhomogeneous boundary data as well.
For this purpose we utilize the tangential trace and extension operators
$\gt:=\mathcal{T}$ and $\chgt:=\mathcal{T}^\me$\,,
which recently have been studied by Weck \cite{wecklip}.
Let us remind of the solution operator
$$\Abb{\Loesom}{\Lzqqpesom\times\cRqdom}{\Rqtom\times\Dqpetom\subset\Lzqqpetom}{(F,G,\lambda)}{\EH}$$
for $s,-t>1/2$ from \paulytimeharmsecboundary\, of the Maxwell system
$$(M+\ie\omega\Lambda)\EH=\FG\in\Lzqqpesom\qqtext{,}\gt E=\lambda\in\cRqdom\qquad,$$
where $\cRqdom:=R^{-1/2,q}(\dom)=\setb{\varkappa\in\qh{\meh}{q}{}{}(\dom)}{\Rot\varkappa\in\qh{\meh}{q+1}{}{}(\dom)}$
and $\Rot:=\pd$ denotes the exterior derivative on the boundary manifold $\dom$\,.
Again $\Loesom$ is well defined for small frequencies $\omega$ and connected to $\loesom$ via
$$\Loesom(F,G,\lambda)=(\chgt\lambda,0)+\loesom\FG-\ie\omega\loesom(\eps\chgt\lambda,0)-\loesom(0,\rot\chgt\lambda)\qquad.$$
Thus $\Loesom$ inherits its low frequency behavior from $\loesom$\,.
We note that $\chgt\lambda\in\rqvoxom$ and
$$(\eps\chgt\lambda,\rot\chgt\lambda)\in\dqvoxom\times\rqpevoxnom\qquad.$$
If we assume a more regular boundary $\dom$\,, e.g. $\pc{3}$\,,
and a slightly more regular dielectrical coefficient $\eps$\,, i.e. $\pc{1}$ at least in an arbitrarily
thin shell of $\om$\,, we can also guarantee $\eps\chgt\lambda\in\dqvoxnom\cap\bonqom^\bot$
by a modification of the extension operator $\chgt$\,, i.e.
$$(\eps\chgt\lambda,0)\in\regqnom{\vox}\qquad.$$
Due to $\gt\rot=\Rot\gt$\,,
i.e. the tangential trace commutes with the exterior derivative,
the condition $\Rot\lambda=0$ would imply homogeneous boundary data for the form $\rot\chgt\lambda$\,,
i.e. $\rot\chgt\lambda\in\ronqpevoxnom$\,. Furthermore, for $b\in\bqpeom$
$$\skp{\rot\chgt\lambda}{b}_{\Lzqpeom{}}=\skp{\lambda}{\gn b}_{\Lzq{}(\dom)}$$
holds by Stokes theorem in the sense of the $\qh{\meh}{q}{}{}(\dom)$-$\qh{\frac{1}{2}}{q}{}{}(\dom)$-duality.
Hence, we have $\rot\chgt\lambda\bot\bqpeom$\,, if and only if $\lambda\bot\gn\bqpeom$\,.
Here $\gn:=\pm\circledast\gt*$ denotes the normal trace and
$\circledast$ the Hodge star operator on the manifold $\dom$\,.
We note that by the regularity assumption for the boundary we have the inclusion $\bqpeom\subset\qhom{1}{q+1}{}{}$
and thus $\gn b\in\qh{\frac{1}{2}}{q}{}{}(\dom)$\,. (Here $\dom\in\pc{2}$ is sufficient.)
Thus, for $\lambda\in\cRqdom$ perpendicular to $\gn\bqpeom$ and
with vanishing rotation we also have $(0,\rot\chgt\lambda)\in\regqnom{\vox}$\,, i.e.
$$(\eps\chgt\lambda,\rot\chgt\lambda)\in\regqnom{\vox}\qquad.$$
These features of $\chgt$ and constraints for $\lambda$ would enhance the asymptotic since then the pair
$(\eps\chgt\lambda,\rot\chgt\lambda)$ would be an element
of the kernels of $\Pi$ and $\tilde{\Gamma}_j$
(See Definition \ref{asymsatzlokalzweidef} and Remark \ref{asymsatzlokalzweirem}).
Moreover, it might be of interest to know if there are modifications of the extension operator $\chgt$
and constraints for the boundary data $\lambda$\,, which even imply
\beq(\eps\chgt\lambda,\rot\chgt\lambda)\in\regqom{\fJ}{\vox}\qqtext{,}\fJ\geq1\qquad.\mylabel{chgtlregJ}\eeq
This property would enhance the asymptotic of $\Loesom$ once more because then all
correction operators would vanish on the extended boundary forms.
We can give a positive answer to this question as well.
By Lemma \ref{Regortho} and Remark \ref{Regorthobem} \eqref{chgtlregJ} holds, if and only if
$(\eps\chgt\lambda,\rot\chgt\lambda)$ is an element of $\regqnom{\vox}$ and perpendicular
\big(in $\Lzqqpeom{}$\big) to all
$\Lambda^\me\Esmj\,,\,\Lambda^\me\Hsnj\in\Lzqqpeom{-s}$ with $1\leq j\leq\fJ$\,.
Again these constraints can be transfered to $\lambda$\,.
For $\pc{\fJ+1}$ regular coefficients $\Lambda$ (at least in a thin shell) it is possible to modify $\chgt$ in a way
that \eqref{chgtlregJ} holds, if $\lambda\in\cRqdom$ is irrotational as well as
perpendicular \big(in $\Lzq{}(\dom)$\big) to $\gn\bqpeom$ and
$\gn\pi_2\Lambda^\me\Esmj\,,\,\gn\pi_2\Lambda^\me\Hsnj$
for all $\Esmj,\Hsmj\in\Lzqqpeom{-s}$ with $1\leq j\leq\fJ$\,. Here $\pi_2$ denotes the projection on the second component.
More details can be found in \cite[chapter 4]{paulyasymrep}.
We note that only the term $\rot\chgt\lambda$ produces the constraints for $\lambda$\,.

It is an interesting open question if all these properties hold for Lipschitz boundaries as well.
}

\section{The spaces of regular convergence}

First let us remind of the special tower forms
$$\turmd{q}{k}{\sigma}{m}{\pm}\qqtext{,}\turmr{q}{k}{\sigma}{m}{\pm}$$
and their properties from \paulystaticsectower, which will be used frequently throughout this paper.
The main tool for their construction is the spherical coordinate calculus developed in \cite{sphharm}.
Hence we shall use also many notations and results from this paper.
From this point of view the paper at hand demonstrates also an application of \cite{sphharm}.

Utilizing \paulystaticitloescor\, we define some special data spaces recursively by

\begin{defini}\mylabel{Regdefi}
Let $\fJ\in\nz$ and $s\in(\fJ-N/2,\infty)\ohne\pI$ as well as $\tau>\max\{0,s-N/2\}$ and $\tau\geq\fJ-s-1$\,.
For $j=1,\dots,\fJ$ we define the `spaces of regular convergence' via
\begin{align*}
\regqnsom&:=\bDqsom\times\bRqpesom\qquad,\\
\regqjsom&:=\setb{\FG\in\regqom{j-1}{s}}{\loes^j\FG\in\regqnom{s-j}}\qquad.
\end{align*}
We will call $\regqjsom$ the `space of regular convergence of order $j$'.
\end{defini}

\begin{rem}\mylabel{Regdefibem}
We have
$$\regqjsom:=\setb{\FG\in\regqnom{s}}{\loes^j\FG\in\Lzqqpeom{s-j}}\qquad.$$
In words, the space of regular convergence $\regqJsom$ is characterized by the following property:
For $\FG\in\regqJsom$ and $j=0,\dots,\fJ$ no tower-forms $\eta D^q_I$ or $\eta R^{q+1}_J$
appear in the powers $\loes^j\FG$\,.
\end{rem}

Clearly for the selfadjoint operator $\calM$ introduced in \cite{paulytimeharm} the resolvent-formula holds for nonreal frequencies.
Our next step is to show that this formula still holds true for real frequencies and $\loesom$ acting on $\regqJsom$ up to the
order $\fJ$\,. Then $\loesom$ is approximated by the usual Neumann sum up to the order $\fJ$\,.

For the purpose of a short notation let us put for $J\in\nzn$
$$\loes_{\omega,J}:=\loesom-\sum_{j=0}^{J}(-\ie\omega)^j\loesn\loes^j\qqtext{,}\loes_{\omega,-1}:=\loesom\qquad.$$

\begin{theo}\mylabel{Regsatz}
Let $\fJ\in\nzn$\,, $s\in(\fJ+1/2,\infty)\ohne\pI$ and $\tau>\max\big\{(N+1)/2,s-N/2\big\}$\,.
Moreover, let $\hat{\omega}$ be as in \paulytimeharmPzero.
Then for all $\omega\in\czpomd\ohne\{0\}$ on $\regqJsom$
$$\loes_{\omega,\fJ-1}=(-\ie\omega)^\fJ\loesom\loes^\fJ\qqtext{,}\loes_{\omega,\fJ}=(-\ie\omega)^\fJ(\loesom-\loesn)\loes^\fJ\qquad.$$
Furthermore, for $s\in(\fJ+1/2,\fJ+N/2)\ohne\pI$ and $\tilde{t}<t:=s-\fJ-(N+1)/2$
\begin{align*}
\text{\rm\bf (i)}&&\normb{\loes_{\omega,\fJ-1}\FG}_{\Lzqqpetom}&=\calO\big(|\omega|^\fJ\big)\normb{\FG}_{\Lzqqpesom}\qquad,\\
\text{\rm\bf (ii)}&&\normb{\loes_{\omega,\fJ}\FG}_{\Lzqqpeom{\tilde{t}}}&=o\big(|\omega|^\fJ\big)\normb{\FG}_{\Lzqqpesom}
\end{align*}
hold uniformly with respect to $\FG\in\regqJsom$\,.
\end{theo}

\begin{proof}
$\loesom\loes^\fJ\FG$ is well defined by \paulytimeharmtheofred, since
$$\loes^j\FG\in\regqnom{s-j}\subset\Lzqqpeom{>\frac{1}{2}}$$
holds for $j=0,\dots,\fJ$\,. Thus also
$$\EH:=\sum_{j=0}^{\fJ-1}(-\ie\omega)^j\loesn\loes^j\FG+(-\ie\omega)^\fJ\loesom\loes^\fJ\FG$$
is well defined. Because of $s>\fJ+1/2>\fJ+1-N/2$ even $\fJ+1$ powers of $\loes$ may be applied to $\FG$
by \paulystaticitloescor. We get
$$\EH=\sum_{j=0}^{\fJ}(-\ie\omega)^j\loesn\loes^j\FG+(-\ie\omega)^\fJ(\loesom-\loesn)\loes^\fJ\FG\qquad.$$
Furthermore, $\EH\in\Ronqkmehom\times\Dqpekmehom$ satisfies the radiation condition.
Since $M\loesn=\id$ and $(M+\ie\omega\Lambda)\loesom=\id$ we obtain $(M+\ie\omega\Lambda)\EH=\FG$\,,
which yields $\EH=\loesom\FG$\,.

Noting $s-\fJ\in(1/2,N/2)$ we may apply \paulytimeharmPzeroiv\,
to $\loes^\fJ\FG\in\regqnom{s-\fJ}$ and obtain uniformly in $\omega\in\czpomd\ohne\{0\}$ and $\FG\in\regqJsom$ the estimate
$$\normb{\loesom\loes^\fJ\FG}_{\Lzqqpetom}\leq c\normb{\loes^\fJ\FG}_{\Lzqqpeom{s-\fJ}}\leq c\normb{\FG}_{\Lzqqpesom}\qquad.$$
\big(We observe that $\loes^\fJ$ is continuous by \paulystaticitloescor.\big)
Analogously using \paulytimeharmcorlomcont\, we may estimate again uniformly in $\omega\in\czpomd\ohne\{0\}$ and $\FG\in\regqJsom$
\begin{align*}
\normb{(\loesom-\loesn)\loes^\fJ\FG}_{\Lzqqpeom{\tilde{t}}}
&\leq\norm{\loesom-\loesn}_{B_{s-\fJ,\tilde{t}}}\normb{\loes^\fJ\FG}_{\Lzqqpeom{s-\fJ}}\\
&\leq c\norm{\loesom-\loesn}_{B_{s-\fJ,\tilde{t}}}\normb{\FG}_{\Lzqqpesom}\qquad,
\end{align*}
which completes the proof since $\norm{\loesom-\loesn}_{B_{s-\fJ,\tilde{t}}}\xrightarrow{\omega\to0}0$\,.
\end{proof}

Now our aim is to characterize $\regqJsom$ utilizing orthogonality constraints.
To realize this we need special growing Dirichlet-forms, which will be defined in the following lemma.
Let us remind of the topological isomorphisms
$$\statMax_\eps:={}_\eps\statMax^q_{s-1}\qqtext{,}\widetilde{\statMax}_\mu:={}^\mu\statMax^{q+1}_{s-1}$$
for some $s\in(1-N/2,\infty)\ohne\pI$ introduced in \paulystatictheoaltmax.

\begin{lem}\mylabel{EsmHgnlemma}
Let $\sigma\in\nzn$ as well as $\tau>\sigma$ and $\tau\geq N/2-1$\,. Then for all indices
$(m,n)\in\{1,\dots,\mu^q_\sigma\}\times\{1,\dots,\mu^{q+1}_\sigma\}$ the `special growing Dirichlet-forms'
\begin{align*}
\Esm\,:=&\,(\id-\statMax_\eps^\me\statMax_\eps)\eta\,\turmd{q}{0}{\sigma}{m}{+}\qquad,\\
\Hsn\,:=&\,(\id-\widetilde{\statMax}_\mu^\me\widetilde{\statMax}_\mu)\eta\,\turmr{q+1}{0}{\sigma}{n}{+}
\end{align*}
are well defined and belong to $\Lzqom{<-\Nh-\sigma}$ resp. $\Lzqpeom{<-\Nh-\sigma}$\,.
These are the unique solutions of the electro-magneto static problems
\begin{align*}
\statMax_\eps\Esm&=(0,0,0)&&,&\Esm-\turmd{q}{0}{\sigma}{m}{+}&\,\text{ decays}&&,\\
\widetilde{\statMax}_\mu\Hsn&=(0,0,0)&&,&\Hsn-\turmr{q+1}{0}{\sigma}{n}{+}&\,\text{ decays}&&.
\end{align*}
\end{lem}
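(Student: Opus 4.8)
The plan is to view the cut-off tower form $\eta\,\turmd{q}{0}{\sigma}{m}{+}$ as an \emph{approximate} growing solution that fails to lie in the (decaying) domain of the isomorphism $\statMax_\eps$ \emph{only} because of its growth at infinity; the operator $\id-\statMax_\eps^\me\statMax_\eps$ then extracts precisely the growing homogeneous solution. First I would recall from \paulystaticsectower\ that $D:=\turmd{q}{0}{\sigma}{m}{+}$ solves the \emph{unperturbed} homogeneous static system $\rot D=0$, $\pdiv D=0$ in the exterior region and grows like $r^\sigma$, so that $\eta D\in\Lzqom{<-\Nh-\sigma}$ while $\iota^*(\eta D)=0$ because $\eta$ vanishes near $\dom$. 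The point to keep in mind is that $\eta D$ is \emph{not} in the domain of $\statMax_\eps$ (it grows), so $\statMax_\eps^\me\statMax_\eps(\eta D)\neq\eta D$, and this very defect is the object we are after.

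The key step is to show that $\statMax_\eps(\eta D)$ lies in the codomain of $\statMax_\eps$ for an admissible weight $s\in(1-\Nh,\infty)\ohne\pI$. By the Leibniz rule together with $\rot D=\pdiv D=0$, the form $\statMax_\eps(\eta D)$ splits into (i) shell contributions supported in $\supp\nabla\eta=\ol{A_{r_1}\cap U_{r_2}}$, arising whenever a derivative hits $\eta$, and (ii) a genuine tail $\eta\,\pdiv\epsd D$ coming from $\pdiv\eps D=\pdiv\epsd D$. Since $\epsd$ decays with rate $\tau$ while $D$ grows like $r^\sigma$, this tail decays like $r^{\sigma-\tau-1}$ and therefore lands in the codomain weight precisely because $\tau>\sigma$; the companion hypothesis $\tau\geq\Nh-1$ secures that $\eps$ is $\tau$-admissible in the range for which \paulystatictheoaltmax\ furnishes the isomorphism. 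I expect this weight bookkeeping (including the harmless, bounded-support contribution to the Dirichlet-form component of $\statMax_\eps(\eta D)$) to be the main obstacle.

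Granting this, I would put $u:=\statMax_\eps^\me\statMax_\eps(\eta D)$, the unique decaying solution in the domain, so that $u\in\Lzqom{>-\Nh}$, and define $\Esm:=\eta D-u=(\id-\statMax_\eps^\me\statMax_\eps)\eta D$. Because $\statMax_\eps\statMax_\eps^\me=\id$ on the codomain, $\statMax_\eps\Esm=\statMax_\eps(\eta D)-\statMax_\eps(\eta D)=(0,0,0)$. The membership $\Esm\in\Lzqom{<-\Nh-\sigma}$ follows from $\eta D\in\Lzqom{<-\Nh-\sigma}$ and $u\in\Lzqom{>-\Nh}\subset\Lzqom{<-\Nh-\sigma}$, while the decay statement follows from the decomposition $\Esm-D=(\eta-1)D-u$, where $(\eta-1)D$ has bounded support (it vanishes on $A_{r_2}$) and $u\in\Lzqom{>-\Nh}$, whence $\Esm-D\in\Lzqom{>-\Nh}$.

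Finally, uniqueness is just the injectivity of $\statMax_\eps$ on decaying forms: if $E'$ is another solution, then $\Esm-E'$ solves the homogeneous system and decays, hence lies in the domain of the isomorphism and is annihilated by $\statMax_\eps$, forcing $\Esm-E'=0$. The magnetic form $\Hsn$ is handled verbatim, replacing $D$ by $\turmr{q+1}{0}{\sigma}{n}{+}$, the coefficient $\eps$ by $\mu$, the operator $\statMax_\eps$ by $\widetilde{\statMax}_\mu$, and interchanging the roles of $\rot$ and $\pdiv$.
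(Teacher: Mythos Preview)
Your argument is correct and follows essentially the same route as the paper's own proof: both reduce the well-definedness to showing that $\statMax_\eps(\eta D)$ lands in the codomain $\bWom{q}{>1-\Nh}$, splitting this image into compactly supported commutator terms and a tail $\pdiv(\epsd\eta D)$ whose decay like $r^{\sigma-\tau-1}$ is controlled by $\tau>\sigma$, and then invoking \paulystatictheoaltmax\ (which requires $\tau\geq N/2-1$) to apply $\statMax_\eps^\me$. The only point you gloss over slightly is the third (finite-dimensional) component of $\statMax_\eps$: the paper notes explicitly that $\supp\eta\cap\supp\bon{q}_\ell=\emptyset$, so these pairings vanish identically rather than being merely ``harmless''.
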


\begin{rem}\mylabel{EsmHgnlemmarem}
To be more precise: $\Esm$ and $\Hsn$ are the unique solutions of
\begin{align*}
\Esm\,\in&\,\,\dhqepsom{\loc}\cap\bonqom^{\bot_\eps}&&,&\Esm-\turmd{q}{0}{\sigma}{m}{+}&\in\Lzqom{>-\Nh}&&,\\
\mu\Hsn\,\in&\,\,\dH{q+1}{\loc}{\mu^\me}(\Omega)\cap\bqpeom^\bot&&,&\Hsn-\turmr{q+1}{0}{\sigma}{n}{+}&\in\Lzqpeom{>-\Nh}&&.
\end{align*}
Here on one hand we used $\statMax_\eps$ resp. $\widetilde{\statMax}_\mu$ as formal mappings, e.g.
$$\statMax_\eps=\big(\pdiv\eps\,\cdot\,,\rot\,\cdot\,,
\skp{\eps\,\cdot\,}{\bonq_1},\dots,\skp{\eps\,\cdot\,}{\bonq_{d^q}}\big)\qquad,$$
and on the other hand $\statMax_\eps^\me$ resp. $\widetilde{\statMax}_\mu^\me$ as the inverse operators.
\end{rem}

\begin{proof}
Uniqueness is clear by \paulystaticintdirichleteps\, and the properties of $\bonqom$\,, $\bqpeom$\,,
see \paulystaticbFormendirichletsenkrecht. Let us assume the well definedness of $\Esm$ for a moment.
Since $\statMax_\eps\Esm=(0,0,0)$ holds we obtain $\Esm\in\dhqepsom{\loc}\cap\bonqom^{\bot_\eps}$\,.
Moreover, we have $\Esm-\turmd{q}{0}{\sigma}{m}{+}\in\Lzqom{>-\Nh}$ because $\statMax_\eps^\me$ maps
in fact to $\Lzqom{>-\Nh}$ and finally the integrability of $\Esm$ is determined by the form
$\eta\turmd{q}{0}{\sigma}{m}{+}$\,, which belongs to $\Lzqom{<-\Nh-\sigma}$ by \paulystaticinttower.
Analogously we may handle $\Hsn$\,, which would prove the lemma.

So it remains to show that $\Esm$ is well defined.
(Then surely $\Hsn$ is well defined as well by similar arguments.)
With $\supp\eta\cap\supp\bon{q}_\ell=\emptyset$ and \paulystaticinttower
\begin{align*}
\statMax_\eps\eta\turmd{q}{0}{\sigma}{m}{+}=&\,\big(\pdiv(\eps\eta\turmd{q}{0}{\sigma}{m}{+}),\rot(\eta\turmd{q}{0}{\sigma}{m}{+}),0\big)\\
=&\,\big(C_{\pdiv,\eta}\turmd{q}{0}{\sigma}{m}{+}+\pdiv(\epsd\eta\turmd{q}{0}{\sigma}{m}{+}),C_{\rot,\eta}\turmd{q}{0}{\sigma}{m}{+},0\big)\\
\in&\,\,\bDom{q-1}{<-\sigma-\Nh+\tau+1}{0}\times\bRom{q+1}{\vox}{0}\times\cz^{d^q}
\end{align*}
holds. Now $-\sigma-N/2+\tau+1>1-N/2$ since $\tau>\sigma$ and thus
$$\statMax_\eps\eta\turmd{q}{0}{\sigma}{m}{+}\in\bWom{q}{>1-\Nh}\qquad.$$
Because also $\tau\geq N/2-1$ and using \paulystatictheoaltmax\, we get the well definedness of
$\statMax_\eps^\me\statMax_\eps\eta\turmd{q}{0}{\sigma}{m}{+}$ and thus of $\Esm$\,.
We note
\beq\statMax_\eps\eta\turmd{q}{0}{\sigma}{m}{+}\in\bWqsom\qquad,\mylabel{statMaxepsints}\eeq
if $\tau>s+\sigma+N/2-1$\,.
\end{proof}

Our next step is to define powers of $\loes$ on the special forms
$\Lambda(\Esm,0)$ and $\Lambda(0,\Hsn)$ using \paulystaticitloes.
Let us introduce a new notation. For $k\in\nzn$ we define
$$\Esmk:=\loes^{k}\Lambda(\Esm,0)\qqtext{,}\Hsmk:=\loes^{k}\Lambda(0,\Hsm)\qquad.$$
The next lemma shows that these definitions are well defined.

\begin{lem}\mylabel{loesEsmHgnlemma}
Let $\fJ,\sigma\in\nzn$\,, $s\in(\fJ+1-N/2,\infty)\ohne\pI$ as well as
$\tau>\sigma+s+N/2-1$ and $\tau\geq\fJ-s$\,. Moreover, let
$(m,n)\in\{1,\dots,\mu^q_\sigma\}\times\{1,\dots,\mu^{q+1}_\sigma\}$ and $j\in\{0,\dots,\fJ\}$\,.
Then $\fJ$ powers of $\loes$ on $\Esmn$ and $\Hsnn$ are well defined and for \ul{even} $j$
\begin{align*}
\Esmj-\eta(\turmd{q}{j}{\sigma}{m}{+},0)
&\in\big(\dqom{s-j-1}\boxplus\eta\calD^q(\cIb^{q,\leq j}_{s-j-1})\big)\times\{0\}\qquad,\\
\Hsnj-\eta(0,\turmr{q+1}{j}{\sigma}{n}{+})
&\in\{0\}\times\big(\ronqpeom{s-j-1}\boxplus\eta\calR^{q+1}(\cJb^{q+1,\leq j}_{s-j-1})\big)
\intertext{as well as for \ul{odd} $j$}
\Esmj-\eta(0,\turmr{q+1}{j}{\sigma}{m}{+})
&\in\{0\}\times\big(\ronqpeom{s-j-1}\boxplus\eta\calR^{q+1}(\cJb^{q+1,\leq j}_{s-j-1})\big)\qquad,\\
\Hsnj-\eta(\turmd{q}{j}{\sigma}{n}{+},0)
&\in\big(\dqom{s-j-1}\boxplus\eta\calD^q(\cIb^{q,\leq j}_{s-j-1})\big)\times\{0\}
\end{align*}
hold. Furthermore,
$$\Esmj,\Hsnj\in\Lzqqpeom{<-\sigma-j-\Nh}$$
and thus
$$\Esmj,\Hsnj\in\Lzqqpeom{-t}\qquad\Equi\qquad t>\sigma+j+N/2\qquad.$$
More precisely: There exist unique constants $\xi^{j,\sigma,}_{},\zeta^{j,\sigma,}_{}\in\cz$
and unique forms
\begin{align*}
e^j_{\sigma,}&\in\big(\eps\ronqom{s-j-1}\big)\cap\dqom{s-j-1}\cap\bonqom^\bot\qquad,\\
h^j_{\sigma,}&\in\big(\mu\dqpeom{s-j-1}\big)\cap\ronqpeom{s-j-1}\cap\bqpeom^\bot\qquad,
\intertext{such that for \ul{even} $j$}
\Esmj&=\eta(\turmd{q}{j}{\sigma}{m}{+},0)+\sum_{I\in\cIb^{q,\leq j}_{s-j-1}}\xi^{j,\sigma,m}_{I}\eta(D^q_I,0)+(e^j_{\sigma,m},0)\qquad,\\
\Hsnj&=\eta(0,\turmr{q+1}{j}{\sigma}{n}{+})+\sum_{J\in\cJb^{q+1,\leq j}_{s-j-1}}\zeta^{j,\sigma,n}_{J}\eta(0,R^{q+1}_J)+(0,h^j_{\sigma,n})
\intertext{and for \ul{odd} $j$}
\Esmj&=\eta(0,\turmr{q+1}{j}{\sigma}{m}{+})+\sum_{J\in\cJb^{q+1,\leq j}_{s-j-1}}\zeta^{j,\sigma,m}_{J}\eta(0,R^{q+1}_J)+(0,h^j_{\sigma,m})\qquad,\\
\Hsnj&=\eta(\turmd{q}{j}{\sigma}{n}{+},0)+\sum_{I\in\cIb^{q,\leq j}_{s-j-1}}\xi^{j,\sigma,n}_{I}\eta(D^q_I,0)+(e^j_{\sigma,n},0)\qquad.
\end{align*}
\end{lem}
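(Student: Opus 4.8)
The plan is to argue by induction on $j$, taking Lemma \ref{EsmHgnlemma} together with Remark \ref{EsmHgnlemmarem} as the base case and using the static iteration theorem \paulystaticitloes (and its corollary \paulystaticitloescor) as the single engine that advances the expansion by one power of $\loes$. For the base case $j=0$, Lemma \ref{EsmHgnlemma} writes $\Esm$ as $\eta\turmd{q}{0}{\sigma}{m}{+}$ plus a remainder lying in $\Lzqom{>-\Nh}$, and since $\Lambda=\id+\Lambdad$ and $\eps=\id+\epsd$ with $\epsd$ decaying like $r^{-\tau}$, the datum $\Lambda(\Esm,0)=(\eps\Esm,0)$ takes the asserted shape in the electric slot as soon as $\tau>\sigma+s+\Nh-1$: the perturbation $\epsd\Esm$ behaves like $r^{\sigma-\tau}$ and so is absorbed into $\dqom{s-1}\cap\bonqom^{\bot}$, while $\statMax_\eps\Esm=(0,0,0)$ keeps $\eps\Esm$ solenoidal and $\eps$-orthogonal to $\bonqom$, which is precisely the admissible input type for the iteration. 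The well-definedness of all $\fJ$ powers then follows because the growth budget $s>\fJ+1-N/2$ keeps $s-j>1-N/2$ for every $j\le\fJ$, and the conditions $\tau>\sigma+s+\Nh-1$ and $\tau\ge\fJ-s$ are exactly those under which \paulystaticitloescor permits the $\fJ$-fold application of $\loes$ to the tower data (compare the computation \eqref{statMaxepsints} in the previous proof).

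For the inductive step I would pass from $\Esmj$ to $\loes\Esmj$ componentwise. The essential structural input from \paulystaticsectower and \paulystaticitloes is that $\loes$ raises the tower degree by one and interchanges the two slots: applied to the leading form $\eta\turmd{q}{j}{\sigma}{m}{+}$ it produces $\eta\turmr{q+1}{j+1}{\sigma}{m}{+}$ (carrying the original index $m$) modulo tower forms of degree \emph{strictly} below $j+1$ and a genuinely decaying remainder, and symmetrically for the magnetic tower family. Because each application of $\loes$ sends a $q$-form datum to a $(q+1)$-form solution and back, the parity of $j$ fixes which component is occupied, which reproduces the even/odd dichotomy in the statement. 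Applying $\loes$ to the subordinate towers indexed by $\cIb^{q,\leq j}_{s-j-1}$ again yields towers of degree $\le j+1$ (never exceeding the new leading one) plus decay, while applying it to the remainder $e^j_{\sigma,m}\in\dqom{s-j-1}$ produces, by the continuity built into \paulystaticitloescor, a decaying form in weight $s-(j+1)-1=s-j-2$; collecting the leading term, the subordinate towers of $\cJb^{q+1,\leq j+1}_{s-j-2}$, and the remainder then gives the asserted decomposition with the weight index lowered by one. The uniqueness of the constants $\xi^{j,\sigma,\cdot}$, $\zeta^{j,\sigma,\cdot}$ and of the forms $e^j_{\sigma,\cdot}$, $h^j_{\sigma,\cdot}$ is then immediate: the growing towers $\eta D^q_I$ (resp. $\eta R^{q+1}_J$) are linearly independent modulo the $\Lzq$-decaying spaces $\dqom{s-j-1}$ (resp. $\ronqpeom{s-j-1}$), and $\Esmj$ itself is uniquely pinned down because $\loes^j$ is a well-defined operator.

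The integrability claim I would read off directly from the growth rates. By \paulystaticinttower the leading forms $\turmd{q}{j}{\sigma}{m}{+}$ and $\turmr{q+1}{j}{\sigma}{n}{+}$ behave like $r^{j+\sigma}$, hence $\eta\turmd{q}{j}{\sigma}{m}{+}\in\Lzqom{<-\sigma-j-\Nh}$ and likewise in the magnetic slot, while every other summand lives in a strictly higher weight ($s-j-1>-\sigma-j-\Nh$ under our hypotheses). Thus this borderline exponent governs $\Esmj$ and $\Hsnj$, giving $\Esmj,\Hsnj\in\Lzqqpeom{<-\sigma-j-\Nh}$ and therefore the stated equivalence $\Esmj,\Hsnj\in\Lzqqpeom{-t}\Leftrightarrow t>\sigma+j+\Nh$.

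I expect the main obstacle to be the bookkeeping in the inductive step rather than any single hard estimate. One must verify that $\loes$ sends the \emph{leading} tower to the next tower of the \emph{correct} index $(j+1,\sigma,m)$ in the swapped family, and that all spuriously generated towers genuinely have degree strictly less than $j+1$, so that they are absorbed into the sums over $\cIb^{q,\leq j+1}_{s-j-2}$ and $\cJb^{q+1,\leq j+1}_{s-j-2}$ without contaminating the identification of the leading term. This is exactly the content of \paulystaticitloes applied to the full tower hierarchy; the delicate points are keeping the $\eps$- and $\mu$-weighted orthogonality constraints $\bonqom^{\bot}$ and $\bqpeom^{\bot}$ intact along the recursion and ensuring the weight budget stays valid at every $j\le\fJ$, both of which are secured by the hypotheses $s>\fJ+1-N/2$, $\tau>\sigma+s+\Nh-1$ and $\tau\ge\fJ-s$.
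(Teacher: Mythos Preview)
Your overall strategy coincides with the paper's: verify that $\Esmn=\Lambda(\Esm,0)$ (and analogously $\Hsnn$) lies in the domain of the static iteration operator from \paulystaticitloes, after which the full tower-plus-remainder structure, the parity alternation, and the integrability statement all follow directly from \paulystaticitloes, \paulystaticitloesrem\ and \paulystaticinttower. Your explicit induction on $j$ is not wrong, but it is redundant---the iteration theorem already encodes that recursion---so the paper simply checks the base input and cites the theorem once.

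There is, however, a genuine gap in your base step. You assert that $\eps\Esm\in\bD{q}{s-1}{0}\big(\{(+,0,\sigma,m)\}\cup\cIb^{q,0}_{s-1},\om\big)$ follows from Lemma~\ref{EsmHgnlemma} and the decay of $\epsd\Esm$. This is correct for $q\ge 2$, but for $q=1$ (which is admitted under the standing hypothesis $1\le q\le N-2$) and $s\ge N/2$ the static solution theory from \paulystatictheoaltmax\ a priori allows the \emph{exceptional} tower form $\check{D}^{1,1}_{s-1}=\turmr{1}{1}{0}{1}{-}$ to appear in the remainder $-\statMax_\eps^{-1}\statMax_\eps\eta\turmd{1}{0}{\sigma}{m}{+}$. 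This form is of $\calR$-type, not $\calD$-type, and its presence would take $\eps\Esm$ outside the admissible input class of \paulystaticitloes, so the iteration would not even start. The paper closes this gap by a second-order ansatz: one seeks $U_{\sigma,m}=\eta\turmr{2}{1}{\sigma}{m}{+}+u_{\sigma,m}$ with $\rot\eps^{-1}\pdiv U_{\sigma,m}=0$, solves for $u_{\sigma,m}$ via the second-order operator ${}_{\rot}\Delta^2_{s-2}$ of \paulystaticsecordlemone, and then observes that $\eps^{-1}\pdiv U_{\sigma,m}$ coincides with $\Esm$ by the uniqueness in Lemma~\ref{EsmHgnlemma}. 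Since $\pdiv U_{\sigma,m}$ manifestly lies in $\bD{1}{s-1}{0}\big(\{I\}\cup\cIb^{1,0}_{s-1},\om\big)$, the exceptional form is absent. You need to add this argument (or an equivalent one) for the case $q=1$; without it the proof is incomplete.
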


\begin{rem}\mylabel{loesEsmHgnlemmabem}
By \paulystaticremodd\, we even have for \ul{odd} $j\leq\fJ$
\begin{align*}
\Esmj-\eta(0,\turmr{q+1}{j}{\sigma}{m}{+})
&\in\{0\}\times\bR{q+1}{s-j-1}{0}(\cJb^{q+1,\leq j}_{s-j-1},\om)\qquad,\\
\Hsnj-\eta(\turmd{q}{j}{\sigma}{n}{+},0)
&\in\bD{q}{s-j-1}{0}(\cIb^{q,\leq j}_{s-j-1},\om)\times\{0\}\qquad,
\end{align*}
since then also $\eta\turmr{q+1}{j}{\sigma}{m}{+}$ is irrotational and $\eta\turmd{q}{j}{\sigma}{n}{+}$ solenoidal.
Moreover, the coefficients satisfy the following recursion:
\begin{align*}
\zeta^{j+1,\sigma,\ell}_{{}_1I}&=\xi^{j,\sigma,\ell}_I&&,&I&\in\cIb^{q,\leq j}_{s-j-1}&&,&\ell&=m,n\qquad,\\
\xi^{j+1,\sigma,\ell}_{{}_1J}&=\zeta^{j,\sigma,\ell}_{J}&&,&J&\in\cJb^{q+1,\leq j}_{s-j-1}&&,&\ell&=m,n
\intertext{Then clearly the next recursion holds as well:}
\xi^{j+2,\sigma,\ell}_{{}_2I}&=\xi^{j,\sigma,\ell}_I&&,&I&\in\cIb^{q,\leq j}_{s-j-1}&&,&\ell&=m,n\qquad,\\
\zeta^{j+2,\sigma,\ell}_{{}_2J}&=\zeta^{j,\sigma,\ell}_J&&,&J&\in\cJb^{q+1,\leq j}_{s-j-1}&&,&\ell&=m,n
\end{align*}
\end{rem}

\begin{proof}
We only have to show that $\Esmn$ and $\Hsnn$ are elements of the domain of definition
of $\loes$ (and then clearly of $\loes^j$) from \paulystaticitloes. Then all our assertions follow
by \paulystaticitloes\,, \paulystaticitloesrem\, and \paulystaticinttower.
We note that the integrability of the forms is always determined by the integrability of the tower forms
with positive sign. Again we only discuss $\Esmn=(\eps\Esm,0)$\,, for example.

By Lemma \ref{EsmHgnlemma} and \eqref{statMaxepsints} as well as \paulystatictheoaltmax\, we observe
\beq\eps\Esm\in\bD{q}{s-1}{0}\big(\{I\}\cup\cIb^{q,0}_{s-1},\om\big)\qqtext{,}I:=(+,0,\sigma,m)\qquad,\mylabel{mistake}\eeq
by \paulystaticinttower\, since $\tau>\sigma+s+N/2-1$\,. Hence utilizing \paulystaticitloes\,
$\loes^j$ may be applied to $\Esmn$ and the lemma would be proved.

Unfortunately we ignored a trifle in this argument.
Here the same problem occurs as in \cite{paulystatic}, namely the appearance of the exceptional tower forms,
which was solved by a second order approach in this paper. A similar approach will help here.
The point is that \eqref{mistake} only holds true for $q\neq1$ in the first sight.
In fact for $q=1$ and $s\geq N/2$ we have to deal with the exceptional tower form
$\check{D}^{1,1}_{s-1}=\turmr{1}{1}{0}{1}{-}=R^1_{\check{I}}$ with $\check{I}:=(-,1,0,1)$\,,
which would cancel our iteration process in the case of appearance.
Now in this special case \eqref{mistake} reads correctly as:
$\eps\Esm$ is an element of $\bDom{1}{\loc}{0}$ and contained in
$$\big(\eps^\me\pr{1}{s-1}{}{\circ}(\om)\cap\pdi{1}{s-1}{}{}(\om)\big)\boxplus\eta\calD^1\big(\{I\}\cup\cIb^{q,0}_{s-1}\big)\boxplus\eta\calR^1\big(\{\check{I}\}\big)\qquad.$$
It remains to show that $R^1_{\check{I}}$ does not occur even in the exceptional case.
We try the ansatz
$$U_{\sigma,m}:=\eta\turmr{2}{1}{\sigma}{m}{+}+u_{\sigma,m}\qquad,$$
to find a solution of the problem
$$\rot\eps^\me\pdiv U_{\sigma,m}=0\qqtext{,}U_{\sigma,m}-\turmr{2}{1}{\sigma}{m}{+}\in\qLzom{2}{>-\Nh}\qquad.$$
Thus we are led to search a solution of
\beq\rot\eps^\me\pdiv u_{\sigma,m}=-\rot\eps^\me\pdiv\eta\turmr{2}{1}{\sigma}{m}{+}\qqtext{,}u_{\sigma,m}\in\qLzom{2}{>-\Nh}\qquad.\mylabel{asm}\eeq
Using once more $\tau>\sigma+s+N/2-1$ and \paulystaticinttower\, we obtain that
$$\rot\eps^\me\pdiv\eta\turmr{2}{1}{\sigma}{m}{+}=C_{\rot\pdiv,\eta}\turmr{2}{1}{\sigma}{m}{+}+\rot\epsdd\pdiv\eta\turmr{2}{1}{\sigma}{m}{+}$$
is an element of $\qLzom{2}{<-\sigma-\Nh+\tau+1}\subset\qLzom{2}{s}$\,, where $\eps^\me=\id+\,\epsdd$\, is $\tau$-$\pc{1}$-admissible as well.
Therefore, the $2$-form $\rot\eps^\me\pdiv\eta\turmr{2}{1}{\sigma}{m}{+}\in\bRom{2}{s}{0}$
lies in the range of ${}_{\rot}\Delta^{2}_{s-2}$ from \paulystaticsecordlemone\, and we get some
$u_{\sigma,m}\in D({}_{\rot}\Delta^{2}_{s-2})$ solving \eqref{asm}. But then
\begin{align*}
\tilde{E}_{\sigma,m}:=\pdiv U_{\sigma,m}
&\in\Big(\pdi{1}{s-1}{}{}(\om)\boxplus\eta\calD^1\big(\{I\}\cup\cIb^{q,0}_{s-1}\big)\Big)\cap\bDom{1}{\loc}{0}\\
&=\,\bD{1}{s-1}{0}\big(\{I\}\cup\cIb^{q,0}_{s-1},\om\big)
\end{align*}
\big(Compare to \paulystaticsecordrem.\big) and
$$\eps^\me\tilde{E}_{\sigma,m}\in\dH{1}{<-\Nh-\sigma}{\eps}(\Omega)\cap\pb{1}{\circ}(\Omega)^{\bot_\eps}\qtext{,}\eps^\me\tilde{E}_{\sigma,m}-\turmd{1}{0}{\sigma}{m}{+}\in\qLzom{1}{>-\Nh}\quad,$$
i.e. $\eps^\me\tilde{E}_{\sigma,m}=\Esm$ by Lemma \ref{EsmHgnlemma} and Remark \ref{EsmHgnlemmarem}.
So in fact $\Esm$ and $\eps\Esm$ do not feature exceptional tower forms.
\end{proof}

\subsection{Compactly supported inhomogeneities}

In this subsection we develop some results especially for compactly supported inhomogeneities $\Lambda$\,.
In fact we assume $r_0$ to be so large, such that
\beq\supp\Lambdad\subset U_{r_0}\mylabel{comper}\eeq
holds. Then in particular $\Lambda=\id$ on $\supp\eta$\,.

\begin{cor}\mylabel{loesEsmHgnkor}
Let $\fJ,\sigma\in\nzn$ and $(m,n)\in\{1,\dots,\mu^q_\sigma\}\times\{1,\dots,\mu^{q+1}_\sigma\}$ 
as well as $j$ in $\{0,\dots,\fJ\}$\,.
Then there exist unique constants $\xi^{j,\sigma,}_{},\zeta^{j,\sigma,}_{}\in\cz$\,,
such that in $\supp\eta$ for \ul{even} $j$
\begin{align*}
\Esmj&=(\turmd{q}{j}{\sigma}{m}{+},0)+\sum_{I\in\cIb^{q,\leq j}}\xi^{j,\sigma,m}_{I}(D^q_I,0)\qquad,\\
\Hsnj&=(0,\turmr{q+1}{j}{\sigma}{n}{+})+\sum_{J\in\cJb^{q+1,\leq j}}\zeta^{j,\sigma,n}_{J}(0,R^{q+1}_J)
\intertext{and for \ul{odd} $j$}
\Esmj&=(0,\turmr{q+1}{j}{\sigma}{m}{+})+\sum_{J\in\cJb^{q+1,\leq j}}\zeta^{j,\sigma,m}_{J}(0,R^{q+1}_J)\qquad,\\
\Hsnj&=(\turmd{q}{j}{\sigma}{n}{+},0)+\sum_{I\in\cIb^{q,\leq j}}\xi^{j,\sigma,n}_{I}(D^q_I,0)
\end{align*}
hold. These series converge uniformly in $\supp\eta$ together with all their derivatives even after
multiplication by arbitrary powers of $r$\,.
(Compare with \cite[p. 1033]{sphharm}, \cite[Theorem 1]{linelaz} and \paulystaticsphharmexpa.)
The constants $\xi^{j,\sigma,}_{}$ and $\zeta^{j,\sigma,}_{}$
coincide with those of Lemma \ref{loesEsmHgnlemma}, whenever they co-exist.
\end{cor}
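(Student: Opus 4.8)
The plan is to derive the corollary from Lemma \ref{loesEsmHgnlemma} by combining it with the generalized spherical harmonics expansion, exploiting that the compact support assumption \eqref{comper} trivializes the medium near infinity. Since $\supp\Lambdad\subset U_{r_0}$ we have $\Lambda=\id$ on $A_{r_0}\supset\supp\eta$, so the conditions on the decay rate $\tau$ in Lemma \ref{loesEsmHgnlemma} (namely $\tau>\sigma+s+N/2-1$ and $\tau\geq\fJ-s$) are satisfied for \emph{every} weight $s$, and moreover the restriction of $\Esmj$ to $A_{r_0}$ solves the homogeneous, constant-coefficient static system, for which the tower forms are exactly the admissible solutions. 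First I would fix an arbitrarily large $s$ and read off from Lemma \ref{loesEsmHgnlemma} the global identity $\Esmj=\eta(\turmd{q}{j}{\sigma}{m}{+},0)+\sum_{I\in\cIb^{q,\leq j}_{s-j-1}}\xi^{j,\sigma,m}_I\eta(D^q_I,0)+(e^j_{\sigma,m},0)$ for even $j$; the remaining three cases are handled identically after the parity-dependent exchange of the $D$- and $R$-towers.

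The key step is then to identify the remainder. On the shell $A_{r_2}$, where $\eta\equiv1$, the identity reduces to the clean finite sum plus $e^j_{\sigma,m}$, which is solenoidal and, since $\eps=\id$ there, also irrotational, hence a decaying Dirichlet-type field. By the generalized spherical harmonics expansion \paulystaticsphharmexpa\ it expands on $A_{r_0}$ into a convergent series of the remaining decaying tower forms $D^q_I$ with $I\in\cIb^{q,\leq j}\ohne\cIb^{q,\leq j}_{s-j-1}$. Because $s$ was arbitrary and the coefficients of Lemma \ref{loesEsmHgnlemma} are independent of $s$ by uniqueness, letting $s\to\infty$ assembles the single infinite series $\Esmj=(\turmd{q}{j}{\sigma}{m}{+},0)+\sum_{I\in\cIb^{q,\leq j}}\xi^{j,\sigma,m}_I(D^q_I,0)$ on $A_{r_2}$. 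Finally, both sides solve the homogeneous equations on all of $A_{r_0}$ and agree on the open shell $A_{r_2}$, so by the uniqueness of the tower-form expansion on a neighbourhood of infinity the identity propagates to the whole of $A_{r_0}\supset\supp\eta$; in particular the cut-off $\eta$ also disappears on the transition region $r_1\leq r\leq r_2$, which is what removes it from the final representation.

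It then remains to verify the asserted mode of convergence, uniform on $\supp\eta$ together with all derivatives and after multiplication by arbitrary powers of $r$. For this I would appeal to the coefficient estimates \paulystaticcoefesti\ (compare \cite[p. 1033]{sphharm} and \cite[Theorem 1]{linelaz}): a field harmonic in $A_{r_0}$ has expansion coefficients decaying super-algebraically in the spherical-harmonic degree, which dominates any fixed power of $r$ and any finite number of angular and radial derivatives on a compact shell. I expect this quantitative bookkeeping, rather than the identification of the series itself, to be the main obstacle. One further point, exactly as in the proof of Lemma \ref{loesEsmHgnlemma}, is to ensure that the exceptional tower form occurring for $q=1$ does not reappear; this is already secured by the second-order argument carried out there and therefore carries over. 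Uniqueness of the constants $\xi^{j,\sigma,m}_I$, $\zeta^{j,\sigma,n}_J$ and their agreement with those of Lemma \ref{loesEsmHgnlemma} whenever both co-exist is then immediate from the linear independence of the tower forms, the underlying spherical harmonics being orthonormal on the sphere.
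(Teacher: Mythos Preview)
Your strategy matches the paper's: invoke Lemma~\ref{loesEsmHgnlemma} (which, thanks to the compact support of $\Lambdad$, is available for arbitrarily large weights $s$) and then feed the remainder into the generalized spherical harmonics expansion \paulystaticsphharmexpa. The paper's execution is more direct in two respects. First, it fixes a single weight $s\geq\fJ+N/2$ and subtracts the finitely many tower forms \emph{without} the cutoff, setting
\[
\tilde{E}:=E-\turmd{q}{j}{\sigma}{m}{+}-\sum_{I\in\cIb^{q,\leq j}_{s-j-1}}\xi^{j,\sigma,m}_I D^q_I
\]
directly on $\supp\eta$; then $\tilde{E}\in\Lzq{\Nh-1}(\supp\eta)$ with $\pdiv\tilde{E}=0$ and $M^{j+1}(\tilde{E},0)=0$ there, so the expansion theorem produces the remaining series on all of $\supp\eta$ in one stroke---no $s\to\infty$ limit and no propagation from $A_{r_2}$ back across the transition annulus are needed. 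Second, a caution on your key step: the membership $e^j_{\sigma,m}\in\eps\,\ronqom{s-j-1}$ does not by itself make $e^j_{\sigma,m}$ irrotational (these are Sobolev-type spaces here, not kernels), so the hypothesis you actually need for the expansion is $M^{j+1}(\,\cdot\,,0)=0$ together with $\pdiv=0$, which is precisely what the paper verifies for $\tilde{E}$.
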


\begin{proof}
We show the representation for some even $j$ and $(E,0):=\Esmj$\,.
The other representations may be proved in a similar way.

We have $\pdiv E=0$ and $(M\Lambda^\me)^{j+1}(E,0)=(0,0)$ in $\om$\,. Hence $M^{j+1}(E,0)$ vanishes in $\supp\eta$\,.
For $\fJ+N/2\leq s\notin\pI$ we see by Lemma \ref{loesEsmHgnlemma}
$$\tilde{E}:=E-\turmd{q}{j}{\sigma}{m}{+}-\sum_{I\in\cIb^{q,\leq j}_{s-j-1}}\xi^{j,\sigma,m}_{I} D^q_I\in\Lzq{s-j-1}(\supp\eta)\subset\Lzq{\Nh-1}(\supp\eta)$$
and
$$\pdiv\restr{\tilde{E}}{\supp\eta}=0\qqtext{,}M^{j+1}\restr{(\tilde{E},0)}{\supp\eta}=(0,0)\qquad.$$
Now the generalized spherical harmonics expansion \paulystaticsphharmexpa\, yields with unique constants
$\xi^{j,\sigma,m}_{}\in\cz$ the representation
$$\restr{\tilde{E}}{\supp\eta}=\sum_{I\in\cIb^{q,\leq j}\ohne\cIb^{q,\leq j}_{s-j-1}}\xi^{j,\sigma,m}_{I} D^q_I$$
and therefore the assertion follows immediately.
\end{proof}

Next we want to characterize $\regqJsom$ by orthogonality constraints using the growing Dirichlet-forms.
For this we need some special properties of the tower-forms. As in \cite{paulystatic} we introduce the
first order differential operator with compactly supported coefficients
$C:=C_{\Delta,\eta}:=\Delta\eta-\eta\Delta$\,, the commutator of $\Delta$ and the multiplication by $\eta$\,.

\begin{lem}\mylabel{Tuermeorthoreg}
Let $u$ and $v$ be regular tower-forms corresponding to some finite set of indices.
Then the cut-off function $\eta$ may be chosen, such that
$$\skp{Cu}{v}_{\Lzq{}}=0$$
holds, except for the special cases
$$\skpb{C\,\,\turmd{q}{k}{\sigma}{m}{\theta}}{\turmd{q}{\ell}{\gamma}{n}{\vartheta}}_{\Lzq{}}\neq0
\qquad\equi\qquad
\skpb{C\,\,\turmr{q}{k}{\sigma}{m}{\theta}}{\turmr{q}{\ell}{\gamma}{n}{\vartheta}}_{\Lzq{}}\neq0$$
$$\equi\qquad\sigma=\gamma\,\,,\,\,m=n\,\,,\,\,\theta\vartheta=-\,\,,\,\,(k,\ell)\in\big\{(0,2),(1,1),(2,0)\big\}$$
and
$$\skpb{C\,\,\turmd{q}{k}{\sigma}{m}{\theta}}{\turmr{q}{\ell}{\gamma}{n}{\vartheta}}_{\Lzq{}}\neq0$$
$$\equi\qquad\sigma=\gamma\,\,,\,\,m=n\,\,,\,\,\theta\vartheta=-\,\,,\,\,(k,\ell)\in\big\{(0,2),(2,0)\big\}\qquad.$$
\end{lem}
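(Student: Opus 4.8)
The plan is to exploit the separation of variables that the regular tower forms inherit from the spherical coordinate calculus of \cite{sphharm}. On $\supp\nabla\eta\subset\ol{A_{r_1}\cap U_{r_2}}$ the inhomogeneity vanishes by \eqref{comper}, so there $\Delta$ is the flat Hodge Laplacian and, by \paulystaticsectower\ and \cite{sphharm}, each regular tower form is a finite sum of products of an explicit radial power of $r$ with a fixed form-valued spherical harmonic determined by the angular data $(\sigma,m)$ and its type ($D$ or $R$); the sign $\theta$ selects the growing radial branch $r^\alpha$ or the decaying branch $r^\beta$ of the underlying indicial equation, and the tower index $k$ shifts the radial exponent. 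Since $\eta=\eta(r)$ is radial,
$$Cu=\Delta(\eta u)-\eta\Delta u,$$
which by Leibniz' rule is a first order operator in $u$ whose coefficients are built from $\eta'$ and $\eta''$, hence functions of $r$ alone. Consequently $C$ never couples different angular degrees, and $\skp{Cu}{v}_{\Lzq{}}$ factorizes into a radial integral against derivatives of $\eta$ times an angular integral of the two spherical harmonics over the unit sphere.

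First I would dispose of the angular part. By orthogonality of the spherical harmonics the angular integral vanishes unless the two harmonics carry the same degree and enumeration index, which forces $\sigma=\gamma$ and $m=n$. Next, since the coefficients of $C$ are scalar functions of $r$, $C$ preserves the form rank; and, as the even/odd pattern of Lemma \ref{loesEsmHgnlemma} shows, the actual rank of a tower form jumps by one with the parity of its tower index. Hence $\skp{Cu}{v}_{\Lzq{}}$ can be nonzero only if the two tower indices have compatible parity. Together with the radial resonance $k+\ell=2$ derived below, this is exactly what separates the two catalogues: for a same-type pair the indices $(0,2),(1,1),(2,0)$ all satisfy $k\equiv\ell\pmod 2$ and survive, whereas for a cross-type pair the entry $(1,1)$ forces ranks differing by two and is annihilated, leaving only $(0,2)$ and $(2,0)$. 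For the surviving cross pairs one still needs the radial derivative inside $C$ to convert the $D$-type angular harmonic into an $R$-type one, so that the fibrewise pairing $\skp{C\,\turmd{q}{k}{}{}{}}{\turmr{q}{\ell}{}{}{}}_{\Lzq{}}$ with the $R$-type partner is genuinely nonzero; this conversion is supplied by the spherical calculus of \cite{sphharm}.

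With $\sigma=\gamma$, $m=n$ and matching rank fixed, everything reduces to a single radial integral. Substituting the explicit radial powers of $u$ and $v$ and integrating the $\eta''$-contribution by parts — the boundary terms drop because $\eta'$ vanishes at both ends of its support — collapses $\skp{Cu}{v}_{\Lzq{}}$ to a nonzero constant multiple of
$$\int_{r_1}^{r_2}\eta'(r)\,r^{\,k+\ell-2}\,dr,$$
the exponent being dictated by the normalizations of \paulystaticsectower\ together with the value of $\alpha+\beta$; the opposite-sign requirement $\theta\vartheta=-$ enters precisely here, since only a growing--decaying pair places the total radial homogeneity at this critical value, a same-sign pair landing strictly off it. The decisive observation is then elementary: among all admissible cut-off functions (those with $\int_{r_1}^{r_2}\eta'\,dr=-1$) the weighted moment above can be made to vanish by a suitable, not necessarily monotone, choice of $\eta$ exactly when the exponent is nonzero, i.e.\ when $k+\ell\neq2$, whereas for $k+\ell=2$ it equals the fixed nonzero value $-1$ and cannot be removed. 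Since only finitely many index pairs occur, finitely many such moment conditions --- none of them at the critical exponent, the resonant pairs being precisely the listed exceptions --- can be met simultaneously by one $\eta$.

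The main obstacle will be the form-valued spherical bookkeeping of the second paragraph: extracting from \cite{sphharm} the exact action of the radial derivative on the $D$- and $R$-type angular harmonics, and hence the precise prefactors, the $D$/$R$ conversion, and the resulting radial exponents, so as to verify that the critical total homogeneity is attained exactly at $k+\ell=2$ and that the surviving prefactors do not accidentally vanish. Once these explicit constants are in hand, the angular orthogonality and the radial moment argument are routine, and the stated catalogue of exceptional triples --- $\sigma=\gamma$, $m=n$, $\theta\vartheta=-$ together with $k+\ell=2$, refined by the rank constraint to exclude $(1,1)$ in the cross case --- follows.
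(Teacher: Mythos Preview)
Your overall architecture matches the paper's proof: factorize $\skp{Cu}{v}$ into an angular pairing on $S^{N-1}$ times a radial moment of $\hat\eta'$, kill all nonresonant moments by a suitable choice of $\eta$ (the paper cites \cite[Lemma~2~(i)]{complete} for this, your bare-hands argument is equivalent), and then read off the surviving cases from the exponent $N-2+\alpha+\beta$. So the strategy is right.

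Where your proposal goes wrong is in the angular bookkeeping. All of $\turmd{q}{k}{\sigma}{m}{\theta}$ and $\turmr{q}{k}{\sigma}{m}{\theta}$ are $q$-forms, for every $k$; the rank does \emph{not} ``jump by one with the parity of the tower index.'' You have confused the tower forms with the iterated static solutions $\Esmj,\Hsnj$ of Lemma~\ref{loesEsmHgnlemma}, which do alternate between the two components. The parity restrictions in the paper come instead from the $\rho/\tau$ decomposition of \cite{sphharm}: one computes $\skp{u}{v}_{(1)}=\skp{\rho u(1)}{\rho v(1)}_{\qLz{q-1}{}(S^{N-1})}+\skp{\tau u(1)}{\tau v(1)}_{\Lzq{}(S^{N-1})}$ and uses that the $\rho$- and $\tau$-parts of the tower forms are built from the orthonormal eigenforms $T^q_{\sigma,m}$, $S^q_{\sigma,m}$ in a pattern depending on the parity of the height. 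This is what forces $k-\ell$ even in the same-type case and $k,\ell$ both even in the cross-type case, and it is also why the cross pairing is nonzero without any ``conversion'': $C$ acts purely radially and leaves the spherical parts untouched, but $D$- and $R$-towers of even height share spherical components.

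A second gap: your claim that same-sign pairs land ``strictly off'' the critical exponent is not automatic. For $\theta=\vartheta=-$ the exponent is $k+\ell-2\sigma-N-2$, and ruling out zero uses that $k+\ell$ is even while $N$ is odd. You never invoke the standing assumption $N$ odd, and without it the statement fails. Once you fix the angular analysis via the $\rho/\tau$ calculus and insert the parity-of-$N$ argument, your outline becomes the paper's proof.
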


\begin{rem}\mylabel{Tuermeorthoregrem}
In the special cases we have
\begin{align*}
&\qquad\skpb{C\,\turmd{q}{k}{\sigma}{m}{-}}{\turmd{q}{\ell}{\sigma}{m}{+}}_{\Lzq{}}
=-\skpb{C\,\turmd{q}{\ell}{\sigma}{m}{+}}{\turmd{q}{k}{\sigma}{m}{-}}_{\Lzq{}}
=\skpb{C\,\turmr{q}{\ell}{\sigma}{m}{-}}{\turmr{q}{k}{\sigma}{m}{+}}_{\Lzq{}}\\
&=-\skpb{C\,\turmr{q}{k}{\sigma}{m}{+}}{\turmr{q}{\ell}{\sigma}{m}{-}}_{\Lzq{}}
=\begin{cases}-\frac{q+\sigma}{N+2\sigma}&,\,(k,\ell)=(0,2)\\1&,\,(k,\ell)=(1,1)\\-\frac{q'+\sigma}{N+2\sigma}&,\,(k,\ell)=(2,0)\\\end{cases}
\intertext{and}
&\qquad\skpb{C\,\turmd{q}{k}{\sigma}{m}{-}}{\turmr{q}{\ell}{\sigma}{m}{+}}_{\Lzq{}}
=\skpb{C\,\turmd{q}{k}{\sigma}{m}{+}}{\turmr{q}{\ell}{\sigma}{m}{-}}_{\Lzq{}}
=\skpb{C\,\turmr{q}{\ell}{\sigma}{m}{-}}{\turmd{q}{k}{\sigma}{m}{+}}_{\Lzq{}}\\
&=\skpb{C\,\turmr{q}{\ell}{\sigma}{m}{+}}{\turmd{q}{k}{\sigma}{m}{-}}_{\Lzq{}}
=\ie\frac{\omega^{q-1}_\sigma}{N+2\sigma}\begin{cases}-1&,\,(k,\ell)=(0,2)\\1&,\,(k,\ell)=(2,0)\\\end{cases}\qquad.
\end{align*}
\end{rem}

\begin{proof}
From \cite[(31)]{linelaz} with $a=b=1$ we have $\skp{Cu}{v}_{\Lzq{}}=-\skp{u}{Cv}_{\Lzq{}}$ for suitable q-forms $u,v$\,.
Using the spherical calculus presented in \cite{sphharm} we compute for tower-forms $u,v$
\begin{align}\begin{split}
&\quad\qquad\skp{Cu}{v}_{\Lzq{}}=\int_{\rzp}r^{N-1}\skp{Cu}{v}_{(r)}\,dr\\
&=\int_{\rzp}r^{N-1}\Big(\skpb{\rho C\rhoh\,\rho u(r)}{\rho v(r)}_{\qLz{q-1}{}(\SN)}+\skpb{\tau C\tauh\,\tau u(r)}{\tau v(r)}_{\Lzq{}(\SN)}\Big)\,dr\\
&=\int_{\rzp}r^{N-1}(\Gamma_{\hat{\eta}}r^{\alpha})r^{\beta}\Big(\skpb{\rho u(1)}{\rho v(1)}_{\qLz{q-1}{}(\SN)}+\skpb{\tau u(1)}{\tau v(1)}_{\Lzq{}(\SN)}\Big)\,dr\\
&=\skp{u}{v}_{(1)}\int_{\rzp}r^{N-1+\beta}\Gamma_{\hat{\eta}}r^{\alpha}\,dr
=(\alpha-\beta)\skp{u}{v}_{(1)}\int_{\rzp}r^{N-2+\alpha+\beta}\hat{\eta}'(r)\,dr\qquad.
\end{split}\mylabel{Cuv}\end{align}
\big(Here we define
$\skp{u}{v}_{(r)}:=\skpb{\rho u(r)}{\rho v(r)}_{\qLz{q-1}{}(\SN)}+\skpb{\tau u(r)}{\tau v(r)}_{\Lzq{}(\SN)}$
and also $\alpha:=\homd(u)$\,, $\beta:=\homd(v)$\,.
We note $\rho u(r)=r^\alpha\rho u(1)$ and $\tau u(r)=r^\beta\tau u(1)$\,.
Furthermore, we denote by $\Gamma_\phi$ for suitable $\phi,\varphi$
the first order ordinary differential operator
$\Gamma_\phi\varphi(t):=2\phi'(t)\varphi'(t)+\phi'(t)+(N-1)t^\me\phi'(t)$\,.\big)
Since the spherical eigen-forms $T^q_{\sigma,m}$ and $S^q_{\sigma,m}$ present an orthonormal system
in $\Lzq{}(\SN)$\,, the expression $\skp{u}{v}_{(1)}$ only may differ from zero in the cases
\begin{align*}
u&=\turmd{q}{k}{\sigma}{m}{\theta}&&,&v=\turmd{q}{\ell}{\sigma}{m}{\vartheta}&&,&&k-\ell\text{ even}&&,\\
u&=\turmr{q}{k}{\sigma}{m}{\theta}&&,&v=\turmr{q}{\ell}{\sigma}{m}{\vartheta}&&,&&k-\ell\text{ even}&&,\\
u&=\turmd{q}{k}{\sigma}{m}{\theta}&&,&v=\turmr{q}{\ell}{\sigma}{m}{\vartheta}&&,&&k,\ell\text{ even}&&,\\
u&=\turmr{q}{k}{\sigma}{m}{\theta}&&,&v=\turmd{q}{\ell}{\sigma}{m}{\vartheta}&&,&&k,\ell\text{ even}
\end{align*}
with $\theta,\vartheta\in\{+,-\}$\,.
We may assume additionally that our tower forms under consideration are at most of height $K$ and index $Z$\,.
According to \cite[Lemma 2 (i)]{complete} we may choose the cut-off function $\eta$
(resp. $\hat{\eta}$\,, $\mbox{\boldmath$\eta$}$), such that for given $\hat{j}\in\nzn$
\beq\int_\rz\hat{\eta}'(r)\,r^j\,dr=\delta_{0,j}\qqtext{,}-\hat{j}\leq j\leq\hat{j}\qtext{,}j\in\zz\qquad{,}\mylabel{etadachortho}\eeq
holds. Let us pick some $\hat{j}\geq N+2(1+K+Z)$\,. In the four cases above we have degrees of homogeneities
$\alpha=\homg{k}{\sigma}{\theta}$ and $\beta=\homg{\ell}{\sigma}{\vartheta}$\,.
Because of $N-2+\alpha+\beta\in[-\hat{j},\hat{j}]$ and \eqref{etadachortho}
the integral \eqref{Cuv} can only differ from zero, if $N-2+\alpha+\beta=0$\,.
But if $\theta\vartheta=+$\,, then either $N-2+\homg{k}{\sigma}{+}+\homg{\ell}{\sigma}{+}=N-2+k+\ell+2\sigma\neq0$
or $N-2+\homg{k}{\sigma}{-}+\homg{\ell}{\sigma}{-}=N-2+k+\ell-2\sigma-2N\neq0$\,,
since $k+\ell$ is even and $N$ odd. So only $\theta\vartheta=-$ is possible and we get
$$N-2+\homg{k}{\sigma}{-}+\homg{\ell}{\sigma}{+}=N-2+\homg{k}{\sigma}{+}+\homg{\ell}{\sigma}{-}=-2+k+\ell=0$$
$$\equi\qquad(k,\ell)\in\big\{(0,2),(1,1),(2,0)\big\}\qquad,$$
where the possibility of $k=\ell=1$ has to be excluded in the two last cases, where $k,\ell$ are even.
Thus we have proved the essential assertions of the lemma.

Let us finally calculate one of the special integrals as an example:
\begin{align*}
&\qquad\skpb{C\,\turmd{q}{k}{\sigma}{m}{-}}{\turmd{q}{\ell}{\sigma}{m}{+}}_{\Lzq{}}=\big(\homg{k}{\sigma}{-}-\homg{\ell}{\sigma}{+}\big)\skpb{\turmd{q}{k}{\sigma}{m}{-}}{\turmd{q}{\ell}{\sigma}{m}{+}}_{(1)}\\
&=(k-\ell-2\sigma-N)\\
&\qquad
\begin{cases}{}^-\alpha^{q,0}_\sigma{}^+\alpha^{q,1}_\sigma\big((\omega^{q-1}_\sigma)^2+(q'+\homg{0}{\sigma}{-})(q'+\homg{2}{\sigma}{+})\big)&,\,(k,\ell)=(0,2)\\
{}^-\alpha^{q+1,0}_\sigma{}^+\alpha^{q+1,0}_\sigma&,\,(k,\ell)=(1,1)\\
{}^-\alpha^{q,1}_\sigma{}^+\alpha^{q,0}_\sigma\big((\omega^{q-1}_\sigma)^2+(q'+\homg{2}{\sigma}{-})(q'+\homg{0}{\sigma}{+})\big)&,\,(k,\ell)=(2,0)
\end{cases}\\
&\\
&=\begin{cases}\frac{(-2-2\sigma-N)((\omega^{q-1}_\sigma)^2+(q'-\sigma-N)(q'+2+\sigma))}{2(2+2\sigma+N)(-2\sigma-N)}=-\frac{q+\sigma}{N+2\sigma}&,\,(k,\ell)=(0,2)\\
-\frac{-2\sigma-N}{2\sigma+N}=1&,\,(k,\ell)=(1,1)\\
\frac{(2-2\sigma-N)((\omega^{q-1}_\sigma)^2+(q'+2-\sigma-N)(q'+\sigma))}{2(2-2\sigma-N)(-2\sigma-N)}=-\frac{q'+\sigma}{N+2\sigma}&,\,(k,\ell)=(2,0)
\end{cases}
\end{align*}
\end{proof}

\begin{lem}\mylabel{Tuermeorthoaus}
In the same sense Lemma \ref{Tuermeorthoreg} holds for all tower-forms, if one pays attention to
$\turmd{0}{0}{0}{1}{-}=0$ and $\turmr{N}{0}{0}{1}{-}=0$\,.
Besides in the special cases we get for the exceptional tower-forms
\begin{align*}
&\qquad\skpb{C\,\turmd{0}{2}{0}{1}{-}}{\turmd{0}{0}{0}{1}{+}}_{\lz}
=-\skpb{C\,\turmd{0}{0}{0}{1}{+}}{\turmd{0}{2}{0}{1}{-}}_{\lz}
=\skpb{C\,\turmr{N}{2}{0}{1}{-}}{\turmr{N}{0}{0}{1}{+}}_{\qLz{N}{}}\\
&=-\skpb{C\,\turmr{N}{0}{0}{1}{+}}{\turmr{N}{2}{0}{1}{-}}_{\qLz{N}{}}
=-\skpb{C\,\turmr{1}{1}{0}{1}{-}}{\turmr{1}{1}{0}{1}{+}}_{\qLz{1}{}}
=\skpb{C\,\turmr{1}{1}{0}{1}{+}}{\turmr{1}{1}{0}{1}{-}}_{\qLz{1}{}}\\
&=-\skpb{C\,\turmd{N-1}{1}{0}{1}{-}}{\turmd{N-1}{1}{0}{1}{+}}_{\qLz{N-1}{}}
=\skpb{C\,\turmd{N-1}{1}{0}{1}{+}}{\turmd{N-1}{1}{0}{1}{-}}_{\qLz{N-1}{}}=1\qquad.
\end{align*}
\end{lem}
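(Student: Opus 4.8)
The plan is to re-examine the proof of Lemma \ref{Tuermeorthoreg} and check that its entire machinery survives when the exceptional tower-forms are admitted, and then to settle by hand the finitely many special pairings where genuinely new contributions arise. The crucial observation is that the master formula \eqref{Cuv} was derived purely from the spherical coordinate calculus of \cite{sphharm} together with the radial homogeneity $\rho u(r)=r^{\alpha}\rho u(1)$, $\tau u(r)=r^{\beta}\tau u(1)$; nowhere did it use that $u$ and $v$ are \emph{regular}. Hence \eqref{Cuv}, combined with the orthonormality of the spherical eigen-forms in $\Lzq{}(\SN)$ (which annihilates $\skp{u}{v}_{(1)}$ unless the angular indices $\sigma,m$ coincide) and the cut-off normalization \eqref{etadachortho} (which annihilates the radial integral unless $N-2+\alpha+\beta=0$), applies verbatim to the exceptional tower-forms. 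First I would therefore rerun the parity argument $N-2+\homg{k}{\sigma}{\theta}+\homg{\ell}{\sigma}{\vartheta}=0$ and conclude, exactly as before, that only $\theta\vartheta=-$ and $(k,\ell)\in\{(0,2),(1,1),(2,0)\}$ (respectively the $(0,2),(2,0)$ subset in the mixed $\calD$--$\calR$ pairing) can survive.

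Next I would dispose of the two degenerate forms. At $\sigma=0$ the spherical harmonic reduces to a constant and the tangential building block feeding the $k=0$ minus-branch collapses, so that $\turmd{0}{0}{0}{1}{-}$ and $\turmr{N}{0}{0}{1}{-}$ vanish identically; this is immediate from their construction in \paulystaticsectower\, (on scalars $\pdiv$ is trivial, and dually on $N$-forms $\rot$ is trivial). Consequently every commutator pairing containing one of them is trivially zero, and Lemma \ref{Tuermeorthoreg} continues to hold ``in the same sense'' once these two forms are read as $0$, with no spurious nonzero entries produced.

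The heart of the matter, and the step I expect to be the main obstacle, is the explicit evaluation of the four genuinely exceptional integrals at $\sigma=0$ and the extreme ranks $q\in\{0,1,N-1,N\}$. The closed forms in Remark \ref{Tuermeorthoregrem} carry factors such as $\frac{q+\sigma}{N+2\sigma}$ and the eigenvalue $\omega^{q-1}_\sigma$, all derived under the assumption that both members are nondegenerate; at $\sigma=0$ and the extreme ranks the relevant normalization constants ${}^{\pm}\alpha^{q,k}_\sigma$ and the quantity $\omega^{q-1}_0$ take their degenerate values, so the generic formula cannot simply be specialized without risking an indeterminate expression. I would instead return to \eqref{Cuv} and insert the explicit $\sigma=0$ tower-forms, compute the prefactor $(\alpha-\beta)=\homg{k}{\sigma}{\theta}-\homg{\ell}{\sigma}{\vartheta}$ and the angular inner product $\skp{u}{v}_{(1)}$ directly, and use \eqref{etadachortho} for the normalized radial integral. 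The arithmetic then collapses to the asserted value $1$, and the several sign variants in the displayed chain follow from the antisymmetry $\skp{Cu}{v}_{\Lzq{}}=-\skp{u}{Cv}_{\Lzq{}}$ together with Hodge duality, which identifies the rank-$0$ computation with the rank-$N$ one and the rank-$1$ computation with the rank-$(N-1)$ one (since $*$ interchanges $\turmd{q}{}{}{}{}$ and $\turmr{N-q}{}{}{}{}$), so that only two essentially different integrals actually need be carried out.
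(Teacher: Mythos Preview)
Your proposal is correct and is essentially what the paper intends: the paper gives no separate proof for this lemma, so the implicit argument is precisely the one you describe, namely to rerun the machinery of the preceding proof (the master formula \eqref{Cuv}, the orthonormality of the spherical eigen-forms, and the cut-off normalization \eqref{etadachortho}) while paying attention to the exceptional towers. Your observation that the closed formulas of Remark \ref{Tuermeorthoregrem} cannot simply be specialized at the extreme ranks---because the exceptional towers carry different normalization constants ${}^{\pm}\alpha^{q,k}_\sigma$ than the regular ones, which is exactly why they are exceptional---is the key point; indeed, blindly plugging $q=0$, $\sigma=0$, $(k,\ell)=(2,0)$ into Remark \ref{Tuermeorthoregrem} would yield $-1$ rather than the correct $+1$, so returning to \eqref{Cuv} and inserting the actual exceptional tower-forms is mandatory, not merely prudent.
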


Summing up we obtain

\begin{rem}\mylabel{Tuermeorthobem}
For tower-forms $u,v$ the scalar product $\skp{Cu}{v}_{\Lzq{}}$ can only differ from zero, if
$u$ and $v$ possess different signs $\pm$ as well as equal eigenvalue and counting indices $\sigma$ and $m$\,.
Additionally in the cases $u=\turmd{q}{k}{\sigma}{m}{\pm}$\,, $v=\turmd{q}{\ell}{\sigma}{m}{\mp}$
or $u=\turmr{q}{k}{\sigma}{m}{\pm}$\,, $v=\turmr{q}{\ell}{\sigma}{m}{\mp}$
the heights $(k,\ell)$ must belong to $\big\{(0,2),(1,1),(2,0)\big\}$ as well as in the cases
$u=\turmd{q}{k}{\sigma}{m}{\pm}$\,, $v=\turmr{q}{\ell}{\sigma}{m}{\mp}$ or in reverse order
even to $\big\{(0,2),(2,0)\big\}$\,.
\end{rem}

Now let us return to our static solutions.
We put
$$\Lzqqpeoml:={}_{\Lambda^\me}\Lzqqpeom{}:={}_{\eps^\me}\Lzqom{}\times{}_{\mu^\me}\Lzqpeom{}$$
with scalar product $\skpoml{\,\cdot\,}{\,\cdot\,}=\skp{\Lambda^\me\,\cdot\,}{\,\cdot\,}_{\Lzqqpeom{}}$\,,
see \paulytimeharmsecthprob.

\begin{lem}\mylabel{Regorthorechnung}
Let $s\in(2-N/2,\infty)\ohne\pI$ and $\FG\in\regqnsom$ with representation
$$\loesn\FG=\EH+\sum_{I\in\cIb^{q,0}_{s-1}}{\tt e}_I\eta(D^q_I,0)+\sum_{J\in\cJb^{q+1,0}_{s-1}}{\tt h}_J\eta(0,R^{q+1}_J)\qquad,$$
where $\EH\in\big(\ronqom{s-1}\cap\eps^\me\dqom{s-1}\big)\times\big(\mu^\me\ronqpeom{s-1}\cap\dqpeom{s-1}\big)$
and ${\tt e}_I,{\tt h}_J\in\cz$\,.
Then for all $I=(-,0,\sigma,m)\in\cIb^{q,0}_{s-1}$ and $J=(-,0,\gamma,n)\in\cJb^{q+1,0}_{s-1}$
\begin{align*}
\text{\rm\bf (i)}&&\skpb{\FG}{\Esmn}_{\Lzqqpeoml{}}&=\skpb{\FG}{\Hgnn}_{\Lzqqpeoml{}}=0\qquad,\\
\text{\rm\bf (ii)}&&\skpb{\FG}{\Esme}_{\Lzqqpeoml{}}&={\tt e}_I\qquad,\\
\text{\rm\bf (iii)}&&\skpb{\FG}{\Hgne}_{\Lzqqpeoml{}}&={\tt h}_J\qquad.
\end{align*}
\end{lem}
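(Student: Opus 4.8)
The plan is to evaluate the four weighted pairings by a Green's-formula argument and then to reduce the resulting contribution ``at infinity'' to the finite shell integrals whose values are listed in Remark \ref{Tuermeorthobem} and Remark \ref{Tuermeorthoregrem}; since the commutator $C=C_{\Delta,\eta}$ was introduced precisely to capture such limits, the whole computation is engineered to land on those tables. Throughout I write $\loesn\FG=(E,H)$, so that $\FG=M\loesn\FG$ gives $F=\pdiv H$ and $G=\rot E$, and I use the stated tower expansions of $E$ and $H$ together with the defining properties $\rot\Esm=0$, $\pdiv\eps\Esm=0$, $\iota^*\Esm=0$, $\eps\Esm\bot\bonqom$ of the growing forms from Lemma \ref{EsmHgnlemma} (and the analogous ones for $\Hsn$).

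First I would unfold $\skpoml{\,\cdot\,}{\,\cdot\,}=\skp{\Lambda^\me\,\cdot\,}{\,\cdot\,}_{\Lzqqpeom{}}$ and, using the self-adjointness of $\eps$ and $\mu$, collapse each of the four weighted pairings to an ordinary (unweighted) pairing of a single data component $F$ or $G$ against the matching growing form $\Esm$ or $\Hsn$. At this stage the two families $\Esme,\Hgne$ and $\Esmn,\Hgnn$ are separated exactly according to whether the surviving pairing meets the electric or the magnetic channel of the static solution; items \textbf{(ii)} and \textbf{(iii)} will turn out to be the matched (resonant) channels and item \textbf{(i)} the crossed (non-resonant) one.

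Next I would insert the cut-off $\eta$ and transport the differential operator across the pairing by the partial-integration formula \paulytimeharmpartint. Because $\rot\Esm=0$, $\pdiv\eps\Esm=0$ and $\iota^*\Esm=0$, and because $\supp\eta\cap\supp\bon{q}_\ell=\emptyset$, the interior term and the genuine boundary term on $\dom$ both disappear, leaving only a contribution at infinity. Since neither the solution nor the growing forms decay, this contribution is made rigorous by passing to the commutators $C_{\Delta,\eta}$ (resp. $C_{\rot,\eta}=\eta'(r)\tilde{R}$, $C_{\pdiv,\eta}=\eta'(r)\tilde{T}$), whose coefficients are supported in the shell $\supp\nabla\eta\subset\ol{A_{r_1}\cap U_{r_2}}$; there the growing and decaying factors meet on a compact set and every integral converges. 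The decaying remainder $\EH\in\big(\ronqom{s-1}\cap\eps^\me\dqom{s-1}\big)\times\big(\mu^\me\ronqpeom{s-1}\cap\dqpeom{s-1}\big)$ drops out, since against the $+$-towers of $\Esm,\Hsn$ its product decays strictly; this is where the weight range $s\in(2-N/2,\infty)\ohne\pI$ and the homogeneity degrees computed in the proof of Lemma \ref{Tuermeorthoreg} enter.

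Finally I would expand both factors into tower forms and apply Remark \ref{Tuermeorthobem}: a shell pairing survives only when the two towers carry opposite signs $\pm$, share eigenvalue and counting indices $\sigma,m$, and have admissible heights. As $\FG\in\regqnsom$ is regular of order $0$, only the height-$0$ solution towers $D^q_I$ and $R^{q+1}_J$ with $I=(-,0,\sigma,m)$ and $J=(-,0,\gamma,n)$ occur, and these resonate with the $+$-towers $\turmd{q}{0}{\sigma}{m}{+}$ and $\turmr{q+1}{0}{\gamma}{n}{+}$ supplied by $\Esm$ and $\Hsn$; reading the normalising constant off Remark \ref{Tuermeorthoregrem} reproduces precisely ${\tt e}_I$ and ${\tt h}_J$, which is \textbf{(ii)} and \textbf{(iii)}, whereas in the crossed channel of \textbf{(i)} the required opposite-sign/equal-index resonance is forbidden and every shell term vanishes. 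The main obstacle I anticipate is the purely combinatorial bookkeeping of heights, signs and ranks through the Green's formulas — ensuring that exactly one resonance survives and that its constant is the normalising $1$ rather than one of the competing values $-\frac{q+\sigma}{N+2\sigma}$, $-\frac{q'+\sigma}{N+2\sigma}$ or $\ie\frac{\omega^{q-1}_\sigma}{N+2\sigma}$ from Remark \ref{Tuermeorthoregrem} — together with the rigorous justification that the non-decaying forms may be integrated by parts at all, which is exactly what the cut-off and commutator device secures.
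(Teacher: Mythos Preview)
Your overall architecture (partial integration, commutator reduction to the shell $\supp\nabla\eta$, tower orthogonality) is the right one, but the decisive combinatorial claim is false and the proof cannot close as written. You assert that the height-$0$ solution towers $D^q_I=\turmd{q}{0}{\sigma}{m}{-}$ and $R^{q+1}_J=\turmr{q+1}{0}{\gamma}{n}{-}$ ``resonate'' with the height-$0$ growing towers $\turmd{q}{0}{\sigma}{m}{+}$ and $\turmr{q+1}{0}{\gamma}{n}{+}$ inside $\Esm$ and $\Hgn$. But Lemma~\ref{Tuermeorthoreg} says that $\skp{Cu}{v}_{\Lzq{}}$ can be nonzero only for heights $(k,\ell)\in\{(0,2),(1,1),(2,0)\}$; the case $(0,0)$ always gives zero. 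So the mechanism you describe for producing ${\tt e}_I$ and ${\tt h}_J$ in \textbf{(ii)}, \textbf{(iii)} does not exist. There is also a misidentification of the test objects: $\Lambda^\me\Esme=\loesn\Esmn=(0,\Htsm)$ is a $(q+1)$-form whose leading tower is $\turmr{q+1}{1}{\sigma}{m}{+}$ of height~$1$, not $\Esm$ itself.

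What the paper does to repair this is not merely bookkeeping: for \textbf{(ii)} one writes $\FG=M\Lambda^\me M\,\loesn\loes\FG$, i.e.\ one invokes a \emph{second} iteration of the static solution operator. In the representation of $\loesn\loes\FG$ the towers $D^q_{{}_1J}$ and $R^{q+1}_{{}_1I}$ of height~$1$ appear (with coefficients $\tilde{\tt e}_{{}_1J}={\tt h}_J$, $\tilde{\tt h}_{{}_1I}={\tt e}_I$), and after the same commutator manipulations one is left with $\skp{CR^{q+1}_{{}_1I}}{\Htsm}_{\Lzqpeom{}}$, a height-$(1,1)$ pairing whose value is exactly $\delta_{I,(-,0,\sigma,m)}$ by Remark~\ref{Tuermeorthoregrem}. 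The proof of \textbf{(i)} proceeds with only one iteration and lands on height-$(1,0)$ pairings, which vanish. A further point you gloss over: to evaluate $\skp{C\,\cdot\,}{\Esm}_{\Lzqom{}}$ etc.\ one must know $\Esm$, $\Htsm$, $\Hgn$, $\Etgn$ as explicit tower series on $\supp\nabla\eta$; this is supplied by Corollary~\ref{loesEsmHgnkor} and is where the compact-support assumption \eqref{comper} on $\Lambdad$ is used.
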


\begin{rem}\mylabel{Regorthorechnungbem}
It is sufficient to choose $\hat{j}\geq2(s+1)$ in \eqref{etadachortho}.
\end{rem}

\begin{proof}
We set $(0,\Htsm):=\Lambda^\me\Esme=\loesn\Esmn$ and $(\Etgn,0):=\Lambda^\me\Hgne=\loesn\Hgnn$\,.
Let us look at $\Esm$ and $\Htsm$\,.
For $(-,0,\sigma,m)\in\cIb^{q,0}_{s-1}$ we have $s>\sigma+1+N/2$ and thus only weights $s$ larger than
$1+N/2$ have to be considered. According to Lemma \ref{loesEsmHgnlemma}
\beq\Esm\in\dH{q}{-s+1}{\eps}(\Omega)\subset\ronqnom{-s+1}\qqtext{,}
\Htsm\in\dqpeom{-s}\qquad.\mylabel{intbarkeitEsmHsm}\eeq
Therefore, all scalar products under consideration are well defined.
By \paulytimeharmpartint\, and Lemma \ref{EsmHgnlemma} we get
$$\skpb{M\EH}{\Esmn}_{\Lzqqpeoml{}}=\skp{\pdiv H}{\Esm}_{\Lzqom{}}=0$$
and thus with $(0,R^{q+1}_J)=M(D^q_{{}_1J},0)$
\begin{align*}
&\qquad\skpb{\FG}{\Esmn}_{\Lzqqpeoml{}}=\skpb{M\loesn\FG}{(\Esm,0)}_{\Lzqqpeom{}}\\
&=\sum_{I\in\cIb^{q,0}_{s-1}}{\tt e}_I\ub{\skpb{M\eta(D^q_I,0)}{(\Esm,0)}_{\Lzqqpeom{}}}_{=\skp{(0,\rot\eta D^q_I)}{(\Esm,0)}_{\Lzqqpeom{}}=0}\\
&\qquad+\sum_{J\in\cJb^{q+1,0}_{s-1}}{\tt h}_J\skpb{M^2\eta(D^q_{{}_1J},0)}{(\Esm,0)}_{\Lzqqpeom{}}\\
&\qquad-\sum_{J\in\cJb^{q+1,0}_{s-1}}{\tt h}_J\ub{\skpb{M C_{M,\eta}(D^q_{{}_1J},0)}{(\Esm,0)}_{\Lzqqpeom{}}}_{=\skp{\pdiv C_{\rot,\eta}D^q_{{}_1J}}{\Esm}_{\Lzqom{}}=0}\qquad.
\end{align*}
For $J=(-,0,\gamma,n)\in\cJb^{q+1,0}_{s-1}$ we have
$\pdiv\eta D^q_{{}_1J}=\pdiv\eta\turmd{q}{1}{\gamma}{n}{-}=0$
by \paulystaticremodd\, and therefore $\pdiv\rot\eta D^q_{{}_1J}=\Delta\eta D^q_{{}_1J}=C D^q_{{}_1J}$\,.
This shows
$$\skpb{\FG}{\Esmn}_{\Lzqqpeoml{}}=\sum_{J\in\cJb^{q+1,0}_{s-1}}{\tt h}_J\skp{CD^q_{{}_1J}}{\Esm}_{\Lzqom{}}$$
recalling $M^2\eh=(\pdiv\rot e,\rot\pdiv h)$\,.

According to \paulystaticitloesremcor\, and under the present assumptions $\loesn\loes\FG$ is also well defined
and has the representation
$$\loesn\loes\FG=(\tilde{E},\tilde{H})+\sum_{I\in\cIb^{q,\leq1}_{s-2}}\tilde{{\tt e}}_I\eta(D^q_I,0)+\sum_{J\in\cJb^{q+1,\leq1}_{s-2}}\tilde{{\tt h}}_J\eta(0,R^{q+1}_J)$$
with $(\tilde{E},\tilde{H})\in\big(\ronqom{s-2}\cap\eps^\me\dqom{s-2}\big)\times\big(\mu^\me\ronqpeom{s-2}\cap\dqpeom{s-2}\big)$
and $\tilde{{\tt e}}_I,\tilde{{\tt h}}_J\in\cz$\,.
Moreover, $\tilde{{\tt h}}_{{}_1I}={\tt e}_I$ and $\tilde{{\tt e}}_{{}_1J}={\tt h}_J$ hold
for $I\in\cIb^{q,0}_{s-1}$ and $J\in\cJb^{q+1,0}_{s-1}$\,. From
$$\Lambda^\me M(\tilde{E},\tilde{H})\in\big(\ronqom{s-1}\cap\eps^\me\dqnom{s-1}\big)\times\big(\mu^\me\ronqpenom{s-1}\cap\dqpeom{s-1}\big)$$
as well as by \eqref{intbarkeitEsmHsm} and \paulytimeharmpartint\, we get
$$\skpb{M\Lambda^\me M(\tilde{E},\tilde{H})}{\loesn\Lambda(\Esm,0)}_{\Lzqqpeom{}}
=-\skpb{M(\tilde{E},\tilde{H})}{(\Esm,0)}_{\Lzqqpeom{}}=0\qquad.$$
Using this and $\FG=M\Lambda^\me M\loesn\loes\FG$ we derive
\begin{align*}
&\qquad\skpb{\FG}{\Esme}_{\Lzqqpeoml{}}\\
&=\sum_{I\in\cIb^{q,\leq1}_{s-2}}\tilde{{\tt e}}_I\ub{\skpb{M^2\eta(D^q_I,0)}{(0,\Htsm)}_{\Lzqqpeom{}}}_{=0}\\
&\qquad+\sum_{J\in\cJb^{q+1,\leq1}_{s-2}}\tilde{{\tt h}}_J\skpb{M^2\eta(0,R^{q+1}_J)}{(0,\Htsm)}_{\Lzqqpeom{}}\\
&=\sum_{J\in\cJb^{q+1,0}_{s-2}}\tilde{{\tt h}}_J\skpb{M^2\eta(0,R^{q+1}_J)}{(0,\Htsm)}_{\Lzqqpeom{}}\\
&\qquad+\sum_{I\in\cIb^{q,0}_{s-1}}\tilde{{\tt h}}_{{}_1I}\skpb{M^2\eta(0,R^{q+1}_{{}_1I})}{(0,\Htsm)}_{\Lzqqpeom{}}
\end{align*}
because $\cJb^{q+1,\leq1}_{s-2}=\cJb^{q+1,0}_{s-2}\,\dot{\cup}\,\cJb^{q+1,1}_{s-2}=\cJb^{q+1,0}_{s-2}\,\dot{\cup}\,{}_1(\cIb^{q,0}_{s-1})$\,.
Applying once more \paulystaticremodd\, we obtain
\begin{align*}
M^2\eta(0,R^{q+1}_{{}_1I})&=\Delta\eta(0,R^{q+1}_{{}_1I})=C(0,R^{q+1}_{{}_1I})\qquad,\\
M^2\eta(0,R^{q+1}_J)&=M^2\eta M(D^q_{{}_1J},0)=MM^2\eta(D^q_{{}_1J},0)-M^2C_{M,\eta}(D^q_{{}_1J},0)\\
&=MC(D^q_{{}_1J},0)-M^2C_{M,\eta}(D^q_{{}_1J},0)\qquad.
\end{align*}
Partial integration yields
$$\skpb{MC(D^q_{{}_1J},0)}{(0,\Htsm)}_{\Lzqqpeom{}}=-\skpb{C(D^q_{{}_1J},0)}{(\Esm,0)}_{\Lzqqpeom{}}$$
and clearly all terms of the sum like
$$\skpb{M^2C_{M,\eta}(D^q_{{}_1J},0)}{(0,\Htsm)}_{\Lzqqpeom{}}$$
vanish by (two times) partial integration.
Finally we get for $(-,0,\sigma,m)\in\cIb^{q,0}_{s-1}$
\begin{align*}
\skpb{\FG}{\Esmn}_{\Lzqqpeoml{}}
&=\sum_{J\in\cJb^{q+1,0}_{s-1}}{\tt h}_J\skp{C D^q_{{}_1J}}{\Esm}_{\Lzqom{}}\qquad,\\
\skpb{\FG}{\Esme}_{\Lzqqpeoml{}}
&=-\sum_{J\in\cJb^{q+1,0}_{s-2}}\tilde{{\tt h}}_J\skp{C D^q_{{}_1J}}{\Esm}_{\Lzqom{}}\\
&\qquad+\sum_{I\in\cIb^{q,0}_{s-1}}{\tt e}_I\skp{CR^{q+1}_{{}_1I}}{\Htsm}_{\Lzqpeom{}}\qquad.
\intertext{Analogously for $(-,0,\gamma,n)\in\cJb^{q+1,0}_{s-1}$ one sees}
\skpb{\FG}{\Hgnn}_{\Lzqqpeoml{}}
&=\sum_{I\in\cIb^{q,0}_{s-1}}{\tt e}_I\skp{C R^{q+1}_{{}_1I}}{\Hgn}_{\Lzqpeom{}}\qquad,\\
\skpb{\FG}{\Hgne}_{\Lzqqpeoml{}}
&=-\sum_{I\in\cIb^{q,0}_{s-2}}\tilde{{\tt e}}_I\skp{C R^{q+1}_{{}_1I}}{\Hgn}_{\Lzqpeom{}}\\
&\qquad+\sum_{J\in\cJb^{q+1,0}_{s-1}}{\tt h}_J\skp{CD^q_{{}_1J}}{\Etgn}_{\Lzqom{}}\qquad.
\end{align*}
Now all integrals on the right hand sides only extend over $\supp\nabla\eta$\,.
Thus we may insert the expansions from Corollary \ref{loesEsmHgnkor} for
$$\Esm\,,\,\Htsm\,,\,\Hgn\,,\,\Etgn\qquad.$$
Using the orthogonality properties from Lemma \ref{Tuermeorthoreg}
and Remark \ref{Tuermeorthoregrem} we finally obtain
$$\skp{CD^q_{{}_1J}}{\Esm}_{\Lzqom{}}=\skp{CR^{q+1}_{{}_1I}}{\Hgn}_{\Lzqpeom{}}=0$$
and
$$\skp{CR^{q+1}_{{}_1I}}{\Htsm}_{\Lzqpeom{}}=\delta_{I,(-,0,\sigma,m)}\qtext{,}
\skp{CD^q_{{}_1J}}{\Etgn}_{\Lzqom{}}=\delta_{J,(-,0,\gamma,n)}\quad.$$
We note that by \paulystaticinttower\, we only have to consider tower-forms with maximal heights
$K=1$ and maximal eigenvalue index $Z\leq s-1-N/2$\,. 
Thus the choice $\hat{j}\geq2(s+1)\geq N+2(1+K+Z)$ is sufficient according to \eqref{etadachortho}.
\end{proof}

Now we are ready to characterize the spaces of regular convergence by orthogonality constraints.

\begin{lem}\mylabel{Regortho}
Let $\fJ\in\nz$ and $s\in(\fJ+1-N/2,\infty)\ohne\pI$ as well as $\FG\in\regqnsom$\,.
Then $\FG$ belongs to $\regqJsom$\,, if and only if
$$\skpboml{\FG}{\Esmke}=\skpboml{\FG}{\Hgnle}=0$$
holds for all $(k,\sigma,m)\in\Theta^{q,\fJ}_s$ and $(\ell,\gamma,n)\in\Theta^{q+1,\fJ}_s$\,, where
$$\Theta^{q,\fJ}_s:=\setb{(k,\sigma,m)\in\nz_0^3}{k\leq \fJ-1\,\wedge\,\sigma<s-N/2-k-1\,\wedge\,1\leq m\leq\mu^q_\sigma}\quad.$$
Moreover, $\regqJsom$ is a closed subspace of $\regqnsom$ and $\Lzqqpesom$\,.
\end{lem}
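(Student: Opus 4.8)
The plan is to prove the equivalence by induction on $\fJ$, reducing at each level the one newly required condition to Lemma \ref{Regorthorechnung} applied to the shifted data $\loes^{k}\FG$. First I would recall from Remark \ref{Regdefibem} that $\FG\in\regqJsom$ is equivalent to $\loes^{j}\FG\in\Lzqqpeom{s-j}$ for $j=0,\dots,\fJ$, i.e. to the absence of every tower-form $\eta D^q_I$, $\eta R^{q+1}_J$ in the powers $\loes^{j}\FG$. Writing $G^{(k)}:=\loes^{k}\FG$ and using $\loes^{k+1}\FG=\Lambda\loesn G^{(k)}$, together with $\Lambda=\id$ on $\supp\eta$, membership in $\regqJsom$ is governed by the height-zero tower coefficients ${\tt e}^{(k)}_I,{\tt h}^{(k)}_J$ in the representation of $\loesn G^{(k)}$ supplied by \paulystaticitloesremcor. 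Thus $\FG\in\regqJsom$ holds exactly when all these coefficients vanish for $k=0,\dots,\fJ-1$, and this single reformulation already contains both directions of the asserted equivalence.

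For $\fJ=1$ (the case $\Theta^{q,0}_s=\emptyset$ being trivial) the statement is precisely Lemma \ref{Regorthorechnung}, which gives ${\tt e}^{(0)}_I=\skpboml{\FG}{\Esme}$ and ${\tt h}^{(0)}_J=\skpboml{\FG}{\Hgne}$ for $I=(-,0,\sigma,m)\in\cIb^{q,0}_{s-1}$ and $J=(-,0,\gamma,n)\in\cJb^{q+1,0}_{s-1}$, i.e. for $\sigma<s-N/2-1$, which is the $k=0$ slice of $\Theta^{q,\fJ}_s$ (recall $D^q_I=\turmd{q}{0}{\sigma}{m}{-}\notin\Lzqom{s-1}$ exactly when $\sigma\le s-1-N/2$). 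For the step $\fJ-1\to\fJ$ I would assume $\FG\in\regqom{\fJ-1}{s}$, which by the induction hypothesis is equivalent to the conditions at all levels $k\le\fJ-2$; since $s>\fJ+1-N/2$ this makes $G^{(\fJ-1)}\in\regqnom{s-\fJ+1}$ admissible data for Lemma \ref{Regorthorechnung} at weight $s-\fJ+1$, which then identifies ${\tt e}^{(\fJ-1)}_I=\skpboml{G^{(\fJ-1)}}{\Esme}$ and ${\tt h}^{(\fJ-1)}_J=\skpboml{G^{(\fJ-1)}}{\Hgne}$ for the indices with $\sigma<s-N/2-\fJ$, the $k=\fJ-1$ slice of $\Theta^{q,\fJ}_s$. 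Hence the one extra requirement $\loes^{\fJ}\FG\in\regqnom{s-\fJ}$ is equivalent to the vanishing of these pairings.

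It remains to transfer the powers of $\loes$ from the data onto the test form, i.e. to establish
$$\skpboml{\loes^{\fJ-1}\FG}{\Esme}=\skpboml{\FG}{\Esmke}\qquad,\qquad\skpboml{\loes^{\fJ-1}\FG}{\Hgne}=\skpboml{\FG}{\Hgnle}\qquad(k=\ell=\fJ-1)\quad,$$
where $\Esmke=\loes^{k}\Esme$ and $\Hgnle=\loes^{\ell}\Hgne$; this is the crux and the main obstacle. Formally it is the selfadjointness of $\loes=\Lambda\loesn$ for $\skpboml{\,\cdot\,}{\,\cdot\,}$, realized by shifting every factor $M$ onto the test form via the partial integration \paulytimeharmpartint. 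Since $\Esme$ and its iterates are static solutions, each shift contributes only the compactly supported commutators $C_{M,\eta}$ and, at the last step, $C=C_{\Delta,\eta}$, exactly as in the proof of Lemma \ref{Regorthorechnung}. Two points need care: that all intermediate pairings remain in dual weighted spaces so no contribution at infinity appears — here the hypotheses $s\notin\pI$, $s>\fJ+1-N/2$, $\tau>\max\{(N+1)/2,s-N/2\}$ and the integrability $\Esmke\in\Lzqqpeom{-s}$ from Lemma \ref{loesEsmHgnlemma} (valid under $\sigma<s-N/2-k-1$) enter; and that after the integrations all cross terms cancel by the commutator orthogonality of Lemma \ref{Tuermeorthoreg} and Remark \ref{Tuermeorthobem}, leaving a single Kronecker contribution. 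Carrying this bookkeeping through $\fJ-1$ applications of $\loes$, with the height propagation tracked by the recursion of Remark \ref{loesEsmHgnlemmabem} and the expansions on $\supp\eta$ of Corollary \ref{loesEsmHgnkor}, is the technically demanding part and completes the induction.

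Closedness then follows for free. By Lemma \ref{loesEsmHgnlemma} every $\Esmke,\Hgnle$ lies in $\Lzqqpeom{-s}$ precisely under the index restriction defining $\Theta^{q,\fJ}_s$, so $\FG\mapsto\skpboml{\FG}{\Esmke}$ and $\FG\mapsto\skpboml{\FG}{\Hgnle}$ are continuous linear functionals on $\Lzqqpesom$. Consequently
$$\regqJsom=\regqnsom\cap\bigcap_{\Theta^{q,\fJ}_s}\ker\skpboml{\,\cdot\,}{\Esmke}\cap\bigcap_{\Theta^{q+1,\fJ}_s}\ker\skpboml{\,\cdot\,}{\Hgnle}$$
is the intersection of $\regqnsom=\bDqsom\times\bRqpesom$ — closed as a product of closed subspaces — with finitely many closed kernels, hence a closed subspace of both $\regqnsom$ and $\Lzqqpesom$.
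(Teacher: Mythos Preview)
Your overall architecture---induction on $\fJ$, base case via Lemma \ref{Regorthorechnung}, then a transfer identity moving powers of $\loes$ from the data onto the test form---is exactly the paper's. The closedness argument is fine and matches the paper's implicit reasoning.

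Where you go astray is the transfer step. You describe it as requiring the commutator orthogonality of Lemma \ref{Tuermeorthoreg}, the expansions of Corollary \ref{loesEsmHgnkor}, and the height recursion of Remark \ref{loesEsmHgnlemmabem}, and you call it ``technically demanding''. None of this machinery is needed. The paper's argument is a two-line repeated partial integration: since $M\loesn=\id$ one has $E^{k}_{\sigma,m}=M\Lambda^{-1}E^{k+1}_{\sigma,m}$, and hence for $\FG\in\regqJsom$ (so that $\loesn\loes^{j-1}\FG\in\ronqom{s-j}\times\dqpeom{s-j}$ with no tower contributions)
\[
\skpboml{\loes^{j}\FG}{E^{k}_{\sigma,m}}
=\skpb{\loesn\loes^{j-1}\FG}{M\Lambda^{-1}E^{k+1}_{\sigma,m}}_{\Lzqqpeom{}}
=-\skpboml{\loes^{j-1}\FG}{E^{k+1}_{\sigma,m}}\,,
\]
the partial integration being justified because Lemma \ref{loesEsmHgnlemma} places $\Lambda^{-1}E^{k+1}_{\sigma,m}$ in the dual weighted space (this is where the index restriction $\sigma<s-N/2-k-1$ enters). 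Iterating gives
$\skpboml{\loes^{\fJ}\FG}{\Esme}=(-1)^{\fJ}\skpboml{\FG}{\EsmJe}$,
so the relation is skew rather than the ``selfadjointness'' you claim; the sign is irrelevant for the vanishing characterization but your description of the mechanism is off. No commutators $C_{M,\eta}$ or $C_{\Delta,\eta}$ appear, no ``single Kronecker contribution'' has to be isolated---those devices belong to the proof of Lemma \ref{Regorthorechnung}, not to the transfer.
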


\begin{rem}\mylabel{Regorthobem}
We have the characterizations
\begin{align*}
\Theta^{q,\fJ}_s&=\setb{(k,\sigma,m)\in\{0,\dots,\fJ-1\}\times\nzn\times\nz}{\Esmke\in\Lzqqpeom{-s}}\quad,\\
\Theta^{q+1,\fJ}_s&=\setb{(\ell,\gamma,n)\in\{0,\dots,\fJ-1\}\times\nzn\times\nz}{\Hgnle\in\Lzqqpeom{-s}}\quad.
\end{align*}
\end{rem}

\begin{proof}
The assertions of the remark follow by Lemma \ref{loesEsmHgnlemma}.
The proof of the lemma is a straightforward induction over $\fJ$\,.
The induction start is given by Lemma \ref{Regorthorechnung} since
$\Theta^{q,1}_s=\setb{(0,\sigma,m)}{(-,0,\sigma,m)\in\cIb^{q,0}_{s-1}}$ and
$\Theta^{q+1,1}_s=\setb{(0,\gamma,n)}{(-,0,\gamma,n)\in\cJb^{q+1,0}_{s-1}}$\,.
For the step we note by definition
\begin{align*}
\regqom{\fJ+1}{s}&=\setb{\FG\in\regqJsom}{\loes^{\fJ+1}\FG\in\regqnom{s-\fJ-1}}\\
&=\setb{\FG\in\regqJsom}{\loes^\fJ\FG\in\regqom{1}{s-\fJ}}
\end{align*}
and according to the induction start we obtain $\FG\in\regqom{\fJ+1}{s}$\,,
if and only if $\FG\in\regqJsom$ and
$$\skpboml{\loes^\fJ\FG}{\Esme}=\skpboml{\loes^\fJ\FG}{\Hgne}=0$$
holds for all $(0,\sigma,m)\in\Theta^{q,1}_{s-\fJ}$ and $(0,\gamma,n)\in\Theta^{q+1,1}_{s-\fJ}$\,.
Since $\loes^\fJ\FG\in\regqom{1}{s-\fJ}$ we get
$$\loesn\loes^{\fJ-1}\FG\in\ronqom{s-\fJ}\times\dqpeom{s-\fJ}$$
and with Lemma \ref{loesEsmHgnlemma}
$$\Lambda^\me\loes^2\Lambda\Esmn\in\ronqom{-s-1+\fJ}\times\{0\}\qquad,$$
because $(0,\sigma,m)\in\Theta^{q,1}_{s-\fJ}$ implies $\sigma<s-1-\fJ-N/2$\,, i.e. $s+1-\fJ>\sigma+2+N/2$\,.
Using partial integration, i.e. \paulytimeharmpartint, we compute
\begin{align*}
&\qquad\skpboml{\loes^\fJ\FG}{\Esme}\\
&=\skpb{\loesn\loes^{\fJ-1}\FG}{M\Lambda^\me\loes^2\Esmn}_{\Lzqqpeom{}}
=-\skpboml{\loes^{\fJ-1}\FG}{\Esmz}
\end{align*}
and therefore repeating this argument
\begin{align*}
\skpboml{\loes^\fJ\FG}{\Esme}&=(-1)^\fJ\skpboml{\FG}{\EsmJe}\qquad.
\intertext{Analogously we conclude}
\skpboml{\loes^\fJ\FG}{\Hgne}&=(-1)^\fJ\skpboml{\FG}{\HgnJe}\qquad.
\end{align*}
Finally we obtain $\FG\in\regqom{\fJ+1}{s}$\,, if and only if $\FG\in\regqJsom$ and
$$\skpboml{\FG}{\EsmJe}=\skpboml{\FG}{\HgnJe}=0$$
holds for all $(0,\sigma,m)\in\Theta^{q,1}_{s-\fJ}$ and $(0,\gamma,n)\in\Theta^{q+1,1}_{s-\fJ}$
and the induction hypothesis for $\regqJsom$ completes the proof.
\end{proof}

We are looking for projectors onto $\regqJsom$ and thus for a dual basis of
$$\Esmk\qqtext{,}\Hsmk\qquad.$$
For $\ell,\sigma\in\nzn$ and $(m,n)\in\{1,\dots,\mu^q_\sigma\}\times\{1,\dots,\mu^{q+1}_\sigma\}$
let us define
\begin{align*}
\epmsn&:=\eta\turmd{q}{1}{\sigma}{n}{\pm}&&,&\hpmsm&:=\eta\turmr{q+1}{1}{\sigma}{m}{\pm}&&,\\
\epmsnl&:=M^\ell(\epmsn,0)&&,&\hpmsml&:=M^\ell(0,\hpmsm)&&.
\end{align*}

\begin{lem}\mylabel{Projlemmae}
Let $\ell,k\in\nzn$\,. Then
$\epmsnl$ and $\hpmsml$ are $\cu$-forms on $\rN$ and belong to $\regqnom{<\mp(\Nh+\sigma)-1+\ell}$\,.
Furthermore, $\epmsnlz$ and $\hpmsmlz$ are compactly supported and thus elements of $\regqnom{\vox}$
as well as $\regqom{\ell}{s}$ for $s\in(\ell-N/2,\infty)\ohne\pI$\,.
Moreover, for $\ell\geq2$
$$\loes^k\epmsnkl=\epmsnl\qqtext{,}\loes^k\hpmsmkl=\hpmsml$$
hold and these equations even remain valid for the negative forms $\emsnl$ and $\hmsml$ if $\ell=0,1$\,.
\end{lem}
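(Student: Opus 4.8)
The plan is to read off every assertion directly from the definitions $\epmsnl=M^\ell(\epmsn,0)$ and $\hpmsml=M^\ell(0,\hpmsm)$, using the homogeneity, commutator and tower structure of the forms from \paulystaticsectower\ and \cite{sphharm}. Throughout we are in the compactly supported regime \eqref{comper}, so that $\Lambda=\id$ and $\loes=\loesn$ on $\supp\eta=\ol{A_{r_1}}$. First I would note that $\epmsn=\eta\,\turmd{q}{1}{\sigma}{n}{\pm}$ is a $\cu$-form on all of $\rN$: the tower-form is a finite sum of $r^\alpha$ times smooth spherical eigen-forms and hence smooth off the origin, while $\eta$ vanishes near the origin, and applying the smooth-coefficient operator $M^\ell$ preserves smoothness. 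A short computation shows that $M^\ell(\epmsn,0)$ has a single nontrivial component, namely $\big((\pdiv\rot)^{\ell/2}\epmsn,0\big)$ for even $\ell$ and $\big(0,\rot(\pdiv\rot)^{(\ell-1)/2}\epmsn\big)$ for odd $\ell$; applying $\pdiv$ resp.\ $\rot$ and invoking $\pdiv\pdiv=0$, $\rot\rot=0$ for $\ell\ge1$ (resp.\ the solenoidality and irrotationality of the height-one tower-forms for $\ell=0$, cf.\ \paulystaticsectower) yields a solenoidal resp.\ irrotational form, which is the differential part of membership in $\regqnom{}$.

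Next I would pin down the weight. Since $\turmd{q}{1}{\sigma}{n}{\pm}$ is homogeneous of degree $\homg{1}{\sigma}{\pm}$ and the first order operator $M$ lowers the degree of homogeneity by one, $\epmsnl$ behaves like $r^{\homg{1}{\sigma}{\pm}-\ell}$ at infinity, and the elementary integrability count for $\int r^{2t+2(\homg{1}{\sigma}{\pm}-\ell)+N-1}\,dr$ gives exactly $\epmsnl\in\Lzqqpeom{<\mp(\Nh+\sigma)-1+\ell}$ (and likewise for $\hpmsml$). The remaining defining properties of $\regqnom{}$ are for free from the location of the support: since $\supp\eta\subset A_{r_1}$ is disjoint from the boundary, which lies in $U_{r_0}$ with $r_0<r_1$, the homogeneous trace conditions hold trivially, and since $\supp\eta\cap\supp\bon{q}_\ell=\emptyset$ (and the analogue for $\bqpeom$) orthogonality to $\bonqom$ resp.\ $\bqpeom$ is automatic. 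This settles the first assertion.

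For the compactly supported representatives $\epmsnlz$ and $\hpmsmlz$ I would expand $M^\ell(\eta\,\turmd{q}{1}{\sigma}{n}{\pm},0)$ by iterating the commutator rule $C_{M,\eta}=\eta'(r)\tilde{S}$: this writes $M^\ell(\eta f,0)=\eta\,M^\ell(f,0)+R$, where every summand of the remainder $R$ carries a factor $\eta'(r)$ and is therefore supported in the \emph{compact} annulus $\supp\nabla\eta=\ol{A_{r_1}\cap U_{r_2}}$. The key point is that the bulk term $\eta\,M^\ell(\turmd{q}{1}{\sigma}{n}{\pm},0)$ drops out, because the height-one tower-forms are annihilated by the relevant power of $M$ (they sit at the bottom of the tower, which is precisely the structural input of \paulystaticsectower); only the compactly supported $R$ survives, so $\epmsnlz,\hpmsmlz\in\regqnom{\vox}$. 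Membership in $\regqom{\ell}{s}$ then follows from Remark \ref{Regdefibem} together with the orthogonality characterization of Lemma \ref{Regortho}: one checks, by repeated partial integration in the spirit of Lemma \ref{Regorthorechnung} and the tower-form orthogonalities of Lemma \ref{Tuermeorthoreg}, that no tower-forms arise in $\loes^j$ of these forms for $j\le\ell$.

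Finally, the intertwining relations rest on $\epmsnkl=M^{k+\ell}(\epmsn,0)$ and $\epmsnl=M^\ell(\epmsn,0)$, so that $\loes^k\epmsnkl=\epmsnl$ is equivalent to $\loes^kM^k=\id$ on these forms. Since $M\loesn=\id$ holds unconditionally, everything hinges on when $\loes M=\id$, that is, on the absence of a kernel (tower) contribution under $\loes$. For $\ell\ge2$ the forms $M^\ell(\epmsn,0)$ are compactly supported and regular by the previous step, hence $\loes M=\id$ and the identity follows by induction on $k$; for $\ell=0,1$ the growing $+$-forms still generate a tower term under $\loes$, whereas the decaying forms $\emsnl$ and $\hmsml$ remain orthogonal to the obstructing kernel so that the identity persists, which is why it is claimed only for them. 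I expect the main obstacle to be exactly this last bookkeeping---establishing the vanishing of the pure-tower part under $M^\ell$ for the $\epmsnlz$ and delimiting the precise range of $\ell$ for which $\loes M=\id$---both resting on the fine height structure of the tower-forms from \paulystaticsectower\ and \cite{sphharm}.
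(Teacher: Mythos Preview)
Your argument is essentially correct and hits all the right ingredients, but the \emph{order} you choose makes one step harder than it needs to be, and there is one point where your justification is too vague.

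\textbf{Ordering.} The paper proceeds as follows: first it writes out $\epmsne$ and $\epmsnz$ explicitly (equations \eqref{Mesn}--\eqref{Mquadhsm}), obtaining $\epmsnz=C(\turmd{q}{1}{\sigma}{n}{\pm},0)$ with $C=C_{\Delta,\eta}$ and hence compact support for every $\epmsnlz$. Next it proves the intertwining $\loes\,\epmsnle=\epmsnl$ for $\ell\geq2$ directly from uniqueness of the static solution: the candidate $\epmsnl$ is compactly supported, satisfies $M\epmsnl=\epmsnle$, meets the boundary and orthogonality constraints trivially, and $\Lambda=\id$ on its support. Only \emph{then} does it conclude $\epmsnlz\in\regqom{\ell}{s}$, and this is now a one-liner: by the intertwining, $\loes^j\epmsnlz=\epmsn^{\langle\ell+2-j\rangle}$ is compactly supported for every $0\leq j\leq\ell$, hence lies in $\Lzqqpeom{s-j}$, which is exactly the criterion of Remark~\ref{Regdefibem}. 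Your proposed route via Lemma~\ref{Regortho} instead requires verifying the scalar products $\skpboml{\epmsnlz}{\Esmke}$ and $\skpboml{\epmsnlz}{\Hsmke}$---that is precisely the content of the \emph{subsequent} Lemma~\ref{Projlemmaz} and is considerably more work. Reordering your proof (intertwining before regularity) removes this detour entirely.

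\textbf{Vanishing of the bulk term.} The claim that $\eta\,M^{\ell+2}(\turmd{q}{1}{\sigma}{n}{\pm},0)=0$ is correct, but ``they sit at the bottom of the tower'' is not quite the reason. One needs $\pdiv\turmd{q}{1}{\sigma}{n}{\pm}=0$ (odd height, \paulystaticremodd) so that $M^2(\turmd{q}{1}{\sigma}{n}{\pm},0)=(\pdiv\rot\turmd{q}{1}{\sigma}{n}{\pm},0)=(\Delta\turmd{q}{1}{\sigma}{n}{\pm},0)$, and then $\Delta$ lowers tower height by two, killing the height-one form. The paper records this as $\epmsnz=C(\turmd{q}{1}{\sigma}{n}{\pm},0)$.

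\textbf{The case $\ell=0,1$ for the minus sign.} Here the paper does not argue via ``orthogonality to the obstructing kernel'' but invokes \paulystaticitloescor\ directly: the explicit shapes $\emsnn=\eta(\turmd{q}{1}{\sigma}{n}{-},0)$ and $\emsne=\eta(0,\turmr{q+1}{0}{\sigma}{n}{-})+C_{M,\eta}(\turmd{q}{1}{\sigma}{n}{-},0)$ place them in the domain of $\loes$ as described there (decaying tower plus a compactly supported piece), on which $\loes$ is a genuine left inverse of $M$. That is the precise meaning of ``the right shape''.
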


\begin{proof}
According to \paulystaticremodd\, we have $\pdiv\epmsn=0$ and $\rot\hpmsm=0$ and hence by \paulystaticinttower\,
$\epmsnn\,,\,\hpmsmn\in\regqnom{<\mp(\Nh+\sigma)-1}$\,. Furthermore,
\begin{align}
\epmsne&=\eta(0,\turmr{q+1}{0}{\sigma}{n}{\pm})
+C_{M,\eta}(\turmd{q}{1}{\sigma}{n}{\pm},0)\quad,\mylabel{Mesn}\\
\epmsnz&=C(\turmd{q}{1}{\sigma}{n}{\pm},0)
=C_{M,\eta}(0,\turmr{q+1}{0}{\sigma}{n}{\pm})
+MC_{M,\eta}(\turmd{q}{1}{\sigma}{n}{\pm},0)\quad,\mylabel{Mquadesn}\\
\hpmsme&=\eta(\turmd{q}{0}{\sigma}{m}{\pm},0)
+C_{M,\eta}(0,\turmr{q+1}{1}{\sigma}{m}{\pm})\quad,\mylabel{Mhsm}\\
\hpmsmz&=C(0,\turmr{q+1}{1}{\sigma}{m}{\pm})
=C_{M,\eta}(\turmd{q}{0}{\sigma}{m}{\pm},0)
+MC_{M,\eta}(0,\turmr{q+1}{1}{\sigma}{m}{\pm})\quad.\mylabel{Mquadhsm}
\end{align}
Thus $\epmsne\,,\,\hpmsme\in\regqnom{<\mp(\Nh+\sigma)}$ and for all $\ell\in\nzn$
$$\supp\epmsnlz\cup\supp\hpmsmlz\subset\supp\nabla\eta\qquad,$$
i.e. $\epmsnlz\,,\,\hpmsmlz\in\regqnom{\vox}$\,.
By \paulystaticitloes\, resp. \paulystaticitloescor\, any power of
$\loes$ is well defined on $\epmsnlz\,,\,\hpmsmlz$\,.
Because of the compact supports we obtain for $\ell\geq2$
\beq\loes\epmsnle=\epmsnl\qqtext{,}\loes\hpmsmle=\hpmsml\mylabel{loesMellgz}\eeq
and a short induction shows
\beq\loes^k\epmsnkl=\epmsnl\qqtext{,}\loes^k\hpmsmkl=\hpmsml\mylabel{loesMkell}\eeq
for all $k\in\nzn$\,.
The forms $\emsnn$\,, $\hmsmn$ and by \eqref{Mesn}, \eqref{Mhsm}
also $\emsne$\,, $\hmsme$ possess the `right shape', such that for the negative forms
according to \paulystaticitloescor\, the equations \eqref{loesMellgz} and \eqref{loesMkell}
hold true for $\ell=0,1$ as well.
Once more taking into account the compact supports of $\epmsnz$ and $\hpmsmz$ we get
$$\epmsnlz\,,\,\hpmsmlz\in\regqom{\ell}{s}$$
for all $\ell\in\nzn$ and $s\in(\ell-N/2,\infty)\ohne\pI$\,.
\end{proof}

\begin{lem}\mylabel{Projlemmaz}
Let $K,Z\in\nzn$\,. Then for all $\sigma\in\{0,\dots,Z\}$ and $k\in\{-1,\dots,K\}$
as well as all $(\ell,\gamma)\in\nz_0^2$ and appropriate $m,n$
\begin{align*}
\skpbomol{\emgnlz}{\Esmke}&=\skpbomol{\hmgnlz}{\Hsmke}=0\qquad,\\
\skpbomol{\emgnlz}{\Hsmke}&=\skpbomol{\hmgnlz}{\Esmke}=
(-1)^\ell\delta_{k,\ell}\delta_{\sigma,\gamma}\delta_{m,n}\qquad.
\end{align*}
\end{lem}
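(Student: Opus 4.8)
The plan is to reduce every one of these pairings to commutator pairings of pure tower-forms, exactly in the manner of the proof of Lemma~\ref{Regorthorechnung}, and then to read off the values from the tower orthogonality collected in Lemma~\ref{Tuermeorthoreg}, Remark~\ref{Tuermeorthoregrem} and Lemma~\ref{Tuermeorthoaus}. I would work throughout in the compactly supported setting \eqref{comper}, so that $\Lambda=\id$ on $\supp\eta$ and the commutator $C:=C_{\Delta,\eta}$ is supported in $\supp\nabla\eta$, where the uniformly convergent tower expansions of Corollary~\ref{loesEsmHgnkor} are at our disposal.

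First I would set up the operator bookkeeping. Since $\loes=\Lambda\loesn$ and $M\loesn=\id$, we have $M\Lambda^\me\loes=\id$, so that each of the $k+1$ factors of $\loes$ hidden in $\Esmke=\loes^{k+1}\Lambda(\Esm,0)$ and $\Hsmke=\loes^{k+1}\Lambda(0,\Hsm)$ is undone by one application of $M\Lambda^\me$. On the dual side the forms $\emgnlz$ and $\hmgnlz$ are compactly supported by Lemma~\ref{Projlemmae} and, by \eqref{Mquadesn}, \eqref{Mquadhsm} and \eqref{loesMkell}, arise by applying powers of $M$ to the base commutator forms $C(\turmd{q}{1}{\gamma}{n}{-},0)$ resp.\ $C(0,\turmr{q+1}{1}{\gamma}{n}{-})$. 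Because $M$ is formally self-adjoint under the electric boundary condition and $M\Lambda^\me\loes=\id$, repeated partial integration \paulytimeharmpartint\ transfers the operators across the pairing, lowering the $\loes$-power of $\Esmke$ (resp.\ $\Hsmke$) against the level of the dual form, with a sign $-1$ at each step, exactly as in the identity $\skpboml{\loes^\fJ\FG}{\Esme}=(-1)^\fJ\skpboml{\FG}{\EsmJe}$ established in Lemma~\ref{Regortho}.

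Iterating this transfer peels off the levels in pairs, accumulating one sign $-1$ per step, until either the $\loes$-powers of $\Esmke$, $\Hsmke$ are exhausted (leaving the genuinely growing forms with leading towers $\turmd{q}{0}{\sigma}{m}{+}$, $\turmr{q+1}{0}{\sigma}{m}{+}$) or the $M$-powers of the dual form are exhausted (leaving the base commutator forms). On $\supp\nabla\eta$ I then substitute the expansions of Corollary~\ref{loesEsmHgnkor} for all forms involved, reducing each pairing to a finite sum of commutator pairings $\skp{C\,\turmd{q}{a}{\gamma}{n}{-}}{\turmd{q}{b}{\sigma}{m}{+}}_{\Lzq{}}$ and $\skp{C\,\turmd{q}{a}{\gamma}{n}{-}}{\turmr{q+1}{b}{\sigma}{m}{+}}_{\Lzq{}}$ together with their $R$-counterparts, where $a+b$ is controlled by $\ell+k$ through the level count. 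Since only finitely many heights and eigenvalue indices occur, bounded by $K$ and $Z$, it suffices as in Remark~\ref{Regorthorechnungbem} to fix $\hat{j}\geq N+2(1+K+Z)$ vanishing moments \eqref{etadachortho} of $\hat{\eta}$.

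By the selection rule summarized in Remark~\ref{Tuermeorthobem} such a commutator pairing can be nonzero only for opposite signs (automatic, $-$ against $+$), equal indices $\gamma=\sigma$ and $n=m$ (producing $\delta_{\sigma,\gamma}\delta_{m,n}$), and heights $(a,b)\in\{(0,2),(1,1),(2,0)\}$; translated through the level count this height restriction becomes exactly $k=\ell$, which furnishes $\delta_{k,\ell}$, and at the matching level the surviving pairing sits at height $(1,1)$, whose value is normalized to $1$ by Remark~\ref{Tuermeorthoregrem} and Lemma~\ref{Tuermeorthoaus}; together with the accumulated sign this gives $(-1)^\ell$. The electric-versus-magnetic parity is what distinguishes the vanishing from the surviving pairings: the ``same origin'' combinations $\skpbomol{\emgnlz}{\Esmke}$ and $\skpbomol{\hmgnlz}{\Hsmke}$ place the relevant leading towers at heights and types forbidden by the rule and hence vanish, whereas the ``crossed'' combinations $\skpbomol{\emgnlz}{\Hsmke}$ and $\skpbomol{\hmgnlz}{\Esmke}$ land on the admissible heights and yield the stated value. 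The main obstacle is exactly this final bookkeeping: keeping precise track of which tower-type ($D$ or $R$), which component and which parity survive the operator transfers, so that Lemma~\ref{Tuermeorthoreg} applies with the correct heights and delivers $\delta_{k,\ell}$ with the sign $(-1)^\ell$ rather than merely up to sign or on a shifted index range. Extra care is needed for the exceptional tower-forms $\turmr{1}{1}{0}{1}{\pm}$ and $\turmd{N-1}{1}{0}{1}{\pm}$ and the vanishing cases $\turmd{0}{0}{0}{1}{-}=0$, $\turmr{N}{0}{0}{1}{-}=0$, where Lemma~\ref{Tuermeorthoaus} must replace Remark~\ref{Tuermeorthoregrem}; the remaining details (signs from partial integration, the $\Lambda=\id$ simplification on $\supp\eta$, and convergence of the expansions after multiplication by powers of $r$ via \paulystaticinttower) are routine and parallel Lemma~\ref{Regorthorechnung}.
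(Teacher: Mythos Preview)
Your approach is essentially the same as the paper's: transfer the $M^\ell$ across the pairing by partial integration to pick up $(-1)^\ell$ and land on $\skpbomol{C(\turmd{q}{1}{\gamma}{n}{-},0)}{M^\ell\Esmke}$ (resp.\ with $\Hsmke$), then use $M\Lambda^\me\loes=\id$ on $\supp\eta$ together with $M\Esmn=(0,0)$ to reduce to $\Esmkeml$ or $\Hsmkeml$, insert the tower expansions of Corollary~\ref{loesEsmHgnkor} on $\supp\nabla\eta$, and read off the values from Lemma~\ref{Tuermeorthoreg}. The paper makes the case split explicit ($\ell\geq k+2$ gives zero trivially; for $\ell\leq k+1$ the parity of $k+1-\ell$ decides which component is nonzero, and only $k=\ell$ survives via the height-$(1,1)$ pairing), whereas you describe this more narratively, but the content is the same. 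Two minor points: the exceptional tower-forms you flag do not actually occur here because of the standing assumption $1\leq q\leq N-2$, and the paper records the slightly larger moment count $\hat{j}\geq N+2(2+K+Z)$ in Remark~\ref{Projlemmazbem}.
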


\begin{rem}\mylabel{Projlemmazbem}
It suffices to choose $\hat{j}\geq N+2(2+K+Z)$ in \eqref{etadachortho}.
\end{rem}

\begin{proof}
We note again that $\emgnz$ and $\hmgnz$ have compact supports.
For all $\ell\in\nzn$ partial integration and \eqref{Mquadesn} yield
\begin{align*}
\text{\rm S}^{k,\ell}_{\sigma,\gamma}:=&\,\skpbomol{\emgnlz}{\Esmke}\\
=&\,\skpbomol{M^\ell\emgnz}{\Esmke}\\
=&\,(-1)^\ell\skpbomol{C(\turmd{q}{1}{\gamma}{n}{-},0)}{ M^\ell\Esmke}\qquad.
\end{align*}
Since $ M\loes=M\loesn=\id$ on $\supp\eta$ and $M(\Esm,0)=(0,0)$ the scalar products
$\text{\rm S}^{k,\ell}_{\sigma,\gamma}$ vanish for $\ell\geq k+2$\,. However, for $\ell\leq k+1$ we get
$$\text{\rm S}^{k,\ell}_{\sigma,\gamma}
=(-1)^\ell\skpbomol{C(\turmd{q}{1}{\gamma}{n}{-},0)}{\Esmkeml}$$
and these scalar products can only differ from zero if $k+1-\ell$ is even. The integrals range only over
$\supp\nabla\eta$\,. Thus we may insert the representations from Corollary \ref{loesEsmHgnkor}
for $\Esmkeml$ and see that $\text{\rm S}^{k,\ell}_{\sigma,\gamma}=0$ holds by
Lemma \ref{Tuermeorthoreg} even in these cases. The same arguments force
$$\tilde{\text{\rm S}}^{k,\ell}_{\sigma,\gamma}:=\skpbomol{ \emgnlz}{\Hsmke}$$
to vanish for $\ell\geq k+2$\,. If $\ell\leq k+1$ we get
$$\tilde{\text{\rm S}}^{k,\ell}_{\sigma,\gamma}
=(-1)^\ell\skpbomol{C(\turmd{q}{1}{\gamma}{n}{-},0)}{\Hsmkeml}\qquad.$$
These scalar products can only differ from zero, if $k+1-\ell$ is odd.
Again we insert the representations from Corollary \ref{loesEsmHgnkor}
for $\Hsmkeml$\,.
But now in the case $k=\ell$ we get a term  $\turmd{q}{1}{\sigma}{m}{+}$\,, whose scalar product
with $C\,\turmd{q}{1}{\gamma}{n}{-}$ does not vanish if $(\sigma,m)=(\gamma,n)$
according to Lemma \ref{Tuermeorthoreg}. Therefore we obtain
$$\tilde{\text{\rm S}}^{k,\ell}_{\sigma,\gamma}
=(-1)^\ell\skpbomol{C(\turmd{q}{1}{\gamma}{n}{-},0)}{(\turmd{q}{k+1-\ell}{\sigma}{m}{+},0)}
=(-1)^\ell\delta_{k,\ell}\delta_{\sigma,\gamma}\delta_{m,n}\quad.$$
Similarly the assertions about the remaining two scalar products may be shown.
\end{proof}

We have found our projections.

\begin{theo}\mylabel{regsatz}
Let $\fJ\in\nz$ and $s\in(\fJ+1-N/2,\infty)\ohne\pI$\,. Then
$$\regqnsom=\regqJsom\dotplus\Upsilon^{q,\fJ}_s\qquad,$$
where $\Upsilon^{q,\fJ}_s:=\Lin\setb{\emsmkz\,,\,\hmgnlz}
{(k,\sigma,m)\in\Theta^{q+1,\fJ}_s\,,\,(\ell,\gamma,n)\in\Theta^{q,\fJ}_s}$\,.\\
\ul{More} p\ul{recisel}y\ul{:} Each $\FG\in\regqnsom$ can be decomposed uniquely as
$$\FG=(F_{\reg},G_{\reg})+(F_\Upsilon,G_\Upsilon)\qquad,$$
where $(F_{\reg},G_{\reg})\in\regqJsom$ and $(F_\Upsilon,G_\Upsilon)\in\Upsilon^{q,\fJ}_s$ are defined by
\begin{align*}
(F_\Upsilon,G_\Upsilon)&:=\sum_{(k,\sigma,m)\in\Theta^{q,\fJ}_s}(-1)^k\skpboml{\FG}{\Esmke}\hmsmkz\\
&\qquad+\sum_{(k,\sigma,m)\in\Theta^{q+1,\fJ}_s}(-1)^k\skpboml{\FG}{\Hsmke}\emsmkz\qquad.
\end{align*}
\end{theo}

\begin{rem}\mylabel{regsatzbem}
$\Upsilon^{q,\fJ}_s$ are finite dimensional subspaces of
$\big(\cqunom\times\cqpeunom\big)\cap\regqnom{\vox}$ and the corresponding projections
$\FG\mapsto(F_\Upsilon,G_\Upsilon)$ resp. $\FG\mapsto(F_{\reg},G_{\reg})$ are continuous.
Moreover, the choice $\hat{j}\geq2(s+\fJ+1)$ in \eqref{etadachortho} is sufficient.
\end{rem}

\begin{proof}
According to Lemma \ref{Projlemmae} we have $\Upsilon^{q,\fJ}_s\subset\regqnom{\vox}$\,.
Thus $\FG\in\regqnsom$ implies $(F_{\reg},G_{\reg})\,,\,(F_\Upsilon,G_\Upsilon)\in\regqnsom$\,.
Applying Lemma \ref{Projlemmaz} we obtain for all $(k,\sigma,m)\in\Theta^{q,\fJ}_s$ and
$(\ell,\gamma,n)\in\Theta^{q+1,\fJ}_s$
$$\skpboml{(F_{\reg},G_{\reg})}{\Esmke}=\skpboml{(F_{\reg},G_{\reg})}{\Hgnle}=0$$
and therefore $(F_{\reg},G_{\reg})\in\regqJsom$ by Lemma \ref{Regortho}, which yields
$$\regqnsom\subset\regqJsom+\Upsilon^{q,\fJ}_s\subset\regqnsom\qquad.$$
So it remains to show the directness of the sum.
Let us pick an element
$$\FG=\sum_{(k,\sigma,m)\in\Theta^{q+1,\fJ}_s}{\tt f}_{k,\sigma,m}\emsmkz
+\sum_{(k,\sigma,m)\in\Theta^{q,\fJ}_s}{\tt g}_{k,\sigma,m}\hmsmkz$$
of the intersection $\regqJsom\cap\Upsilon^{q,\fJ}_s$\,. Applying $\loes$ yields that
$$\loes\FG=\sum_{(k,\sigma,m)\in\Theta^{q+1,\fJ}_s}{\tt f}_{k,\sigma,m}\emsmke
+\sum_{(k,\sigma,m)\in\Theta^{q,\fJ}_s}{\tt g}_{k,\sigma,m}\hmsmke$$
belongs to $\regqom{\fJ-1}{s-1}\subset\Lzqqpeom{s-1}$ by Lemma \ref{Projlemmae}.
If $k>0$ the forms $\emsmke$ resp. $\hmsmke$ have compact supports.
But for $k=0$ with \eqref{Mesn}, \eqref{Mhsm} the forms $\emsme$\,, $\hmsme$ are no longer compactly supported.
However, they belong to $\Lzqqpeom{<\Nh+\sigma}$ but even not to $\Lzqqpeom{\Nh+\sigma}$\,.
Thus $\emsme$\,, $\hmsme$ are not elements of $\Lzqqpeom{s-1}$ since
$(0,\sigma,m)\in\Theta^{q+1,\fJ}_s$ resp. $(0,\sigma,m)\in\Theta^{q,\fJ}_s$
implies $N/2+\sigma<s-1$\,. The forms $\emsme$ and $\hmsme$ are linear independent.
Consequently the coefficients ${\tt f}_{0,\sigma,m}$\,, ${\tt g}_{0,\sigma,m}$
have to vanish. Repeating this argument with $\loes^j\FG$ for $j=2,\dots,\fJ$ finally shows
${\tt f}_{k,\sigma,m}={\tt g}_{k,\sigma,m}=0$
for all $(k,\sigma,m)\in\Theta^{q+1,\fJ}_s$ and $(k,\sigma,m)\in\Theta^{q,\fJ}_s$\,.
\end{proof}

We are ready to approach the low frequency asymptotics.

\section{Low frequency asymptotics}

We will prove the desired asymptotic expansion in four steps, which are:\mylabel{asymsec}
\begin{description}
\item[\sf step one:] proof in the reduced case, i.e.:\\
compactly supported perturbations $\Lambdad$\,;\\
right hand sides from $\regqnsom$\,;\\
estimates in local norms
\item[\sf step two:] replacing $\regqnsom$ by $\Lzqqpesom$
\item[\sf step three:] replacing local norms by weighted norms
\item[\sf step four:] replacing compactly supported perturbations $\epsd$\,, $\mud$
by asymptotically vanishing perturbations
\end{description}
Following this program we only drop the assumption of compactly supported perturbations of the medium
in the last step. Thus \eqref{comper} may be assumed during the first three steps.

\subsection{First step}

\begin{lem}\mylabel{asyme}
Let $\fJ\in\nzn$\,, $s\in(\fJ+1/2,\fJ+N/2)\ohne\pI$ and $t:=s-\fJ-(N+1)/2$\,. Then
$$\Big|\normabst\Big|\loes_{\omega,\fJ-1}\FG
-\sum_{(k,\sigma,m)\in\Theta^{q,\fJ}_s}(\ie\omega)^k\skpboml{\FG}{\Esmke}\loes_{\omega,\fJ-1-k}\hmsmz$$
$$-\sum_{(k,\sigma,m)\in\Theta^{q+1,\fJ}_s}(\ie\omega)^k\skpboml{\FG}{\Hsmke}\loes_{\omega,\fJ-1-k}\emsmz\Big|\normabst\Big|_{\Lzqqpetom}$$
$$=\calO\big(|\omega|^\fJ\big)\normb{\FG}_{\Lzqqpesom}$$
holds uniformly in $\omega\in\czpomd\ohne\{0\}$ and $\FG\in\regqnsom$\,.
\end{lem}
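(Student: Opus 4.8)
\section*{Proof proposal}

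The plan is to reduce everything to Theorem \ref{Regsatz}(i) by splitting the data along the direct decomposition $\regqnsom=\regqJsom\dotplus\Upsilon^{q,\fJ}_s$ of Theorem \ref{regsatz} and then showing that the correction sum subtracted in the statement is \emph{exactly} $\loes_{\omega,\fJ-1}$ applied to the $\Upsilon$-component. So first I would write $\FG=(F_{\reg},G_{\reg})+(F_\Upsilon,G_\Upsilon)$ with the explicit formula for $(F_\Upsilon,G_\Upsilon)$ from Theorem \ref{regsatz}, apply $\loes_{\omega,\fJ-1}$, and treat the two summands separately. The regular summand is immediate: since $(F_{\reg},G_{\reg})\in\regqJsom$ and the hypotheses on $s$ and $t$ here coincide with those of Theorem \ref{Regsatz}(i), that result gives $\normb{\loes_{\omega,\fJ-1}(F_{\reg},G_{\reg})}_{\Lzqqpetom}=\calO(|\omega|^\fJ)\normb{(F_{\reg},G_{\reg})}_{\Lzqqpesom}$, and the continuity of the projection $\FG\mapsto(F_{\reg},G_{\reg})$ from Remark \ref{regsatzbem} bounds the last norm by $c\,\normb{\FG}_{\Lzqqpesom}$. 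Hence this piece already contributes $\calO(|\omega|^\fJ)\normb{\FG}_{\Lzqqpesom}$.

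The heart of the argument is the \emph{exact} commutation identity for the $\Upsilon$-generators,
$$\loes_{\omega,\fJ-1}\hmsmkz=(-\ie\omega)^k\loes_{\omega,\fJ-1-k}\hmsmz\qqtext{,}\loes_{\omega,\fJ-1}\emsmkz=(-\ie\omega)^k\loes_{\omega,\fJ-1-k}\emsmz\qquad,$$
valid for $k\leq\fJ-1$, which is guaranteed by the definition of $\Theta^{q,\fJ}_s$ and $\Theta^{q+1,\fJ}_s$. To prove the first one I would write $b:=\hmsmz$ (compactly supported in $\supp\nabla\eta$, where by \eqref{comper} we have $\Lambda=\id$) and use $\hmsmkz=M^k b$. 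From the resolvent identity $\loesom(M+\ie\omega\Lambda)=\id$ together with $\Lambda=\id$ on the supports one gets the one-step relation $\loesom Mu=u-\ie\omega\loesom u$ for every $u$ supported there; iterating over the locally supported forms $M^i b$ yields $\loesom M^k b=\sum_{i=0}^{k-1}(-\ie\omega)^i M^{k-1-i}b+(-\ie\omega)^k\loesom b$. Meanwhile the static part telescopes: by the iteration property of Lemma \ref{Projlemmae} we have $\loes^j\hmsmkz=M^{k-j}b$ for $j\leq k$, and $\loesn M=\id$ collapses $\sum_{j=0}^{k-1}(-\ie\omega)^j\loesn\loes^j\hmsmkz$ to $\sum_{j=0}^{k-1}(-\ie\omega)^j M^{k-1-j}b$, which cancels the polynomial part above. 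Reindexing the remaining terms $j\geq k$ then leaves precisely $(-\ie\omega)^k\loes_{\omega,\fJ-1-k}b$, and the form with $\emsmkz$ is handled identically.

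Multiplying these identities by $(-1)^k$ turns $(-1)^k\loes_{\omega,\fJ-1}\hmsmkz$ into $(\ie\omega)^k\loes_{\omega,\fJ-1-k}\hmsmz$, so that $\loes_{\omega,\fJ-1}(F_\Upsilon,G_\Upsilon)$ reproduces verbatim the two correction sums subtracted in the statement. Consequently the bracketed expression in the Lemma equals $\loes_{\omega,\fJ-1}(F_{\reg},G_{\reg})$, and the desired bound follows from the regular piece.

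I expect the only genuinely delicate step to be the exact recursion for $\loesom M$. One must justify that $\loesom$ is a true left inverse of $M+\ie\omega\Lambda$ on the compactly supported forms $M^i\hmsmz$ (a domain and regularity check), and it is crucial that $\Lambda=\id$ on $\supp\eta$: without \eqref{comper} the term $\loesom\Lambda M^{k-1}b$ would not collapse and the identity would survive only up to a remainder, which is exactly why this clean form lives in the first step. I would also take care that the telescoping of the static part uses the iteration property of Lemma \ref{Projlemmae} down to the lowest levels, which is the reason the \emph{negative} tower forms $\hmsmz,\emsmz$ must be used.
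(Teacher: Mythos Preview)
Your proposal is correct and follows essentially the same route as the paper: decompose via Theorem \ref{regsatz}, dispose of the regular part by Theorem \ref{Regsatz}(i), and reduce the $\Upsilon$-part to the exact identity $\loes_{\omega,\fJ-1}\emsmkz=(-\ie\omega)^k\loes_{\omega,\fJ-1-k}\emsmz$ (and its $h$-analogue). The only cosmetic difference is in how this identity is obtained: the paper gets $\loes_{\omega,k-1}\emsmkz=(-\ie\omega)^k\loesom\emsmz$ directly from the resolvent formula of Theorem \ref{Regsatz} applied with $\fJ=k$ (using $\emsmkz\in\regqom{k}{s}$ and $\loes^k\emsmkz=\emsmz$), then subtracts the remaining static terms; you instead re-derive that resolvent formula by hand via the one-step relation $\loesom Mu=u-\ie\omega\loesom u$ on compactly supported $u$. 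Both arguments rely on the same Lemma \ref{Projlemmae} iteration and on $\Lambda=\id$ on $\supp\nabla\eta$, so there is no substantive difference.
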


\begin{proof}
According to Theorem \ref{regsatz} we decompose $\FG\in\regqnsom$
$$\FG=(F_{\reg},G_{\reg})+(F_\Upsilon,G_\Upsilon)\in\regqJsom\dotplus\Upsilon^{q,\fJ}_s$$
and obtain by Theorem \ref{Regsatz} uniformly in $\omega$ and $(F_{\reg},G_{\reg})$
$$\normb{\loes_{\omega,\fJ-1}(F_{\reg},G_{\reg})}_{\Lzqqpetom}=\calO\big(|\omega|^\fJ\big)\normb{(F_{\reg},G_{\reg})}_{\Lzqqpesom}\qquad.$$
By Remark \ref{regsatzbem} the projections are continuous and thus
$$\normb{\loes_{\omega,\fJ-1}\FG-\loes_{\omega,\fJ-1}(F_\Upsilon,G_\Upsilon)}_{\Lzqqpetom}=\calO\big(|\omega|^\fJ\big)\normb{\FG}_{\Lzqqpesom}\qquad.$$
This shows that we only have to determine the asymptotics of the special forms
$\emsmkz$\,, $\hmsmkz$ for $k\leq\fJ-1$\,, which belong to $\regqom{k}{s}$ using Lemma \ref{Projlemmae}.
Theorem \ref{Regsatz} and Lemma \ref{Projlemmae} yield
\begin{align*}
\loes_{\omega,k-1}\emsmkz&=(-\ie\omega)^k\loesom\loes^k\emsmkz=(-\ie\omega)^k\loesom\emsmz\qquad,\\
\loes_{\omega,k-1}\hmsmkz&=(-\ie\omega)^k\loesom\loes^k\hmsmkz=(-\ie\omega)^k\loesom\hmsmz\qquad.
\intertext{Then for $1\leq k\leq\fJ-1$ we obtain}
\loes_{\omega,\fJ-1}\emsmkz&=\loes_{\omega,k-1}\emsmkz
-\sum_{j=k}^{\fJ-1}(-\ie\omega)^j\loesn\loes^j\emsmkz\\
&=(-\ie\omega)^k\loes_{\omega,\fJ-1-k}\emsmz
\intertext{since $\loes^j\emsmkz=\loes^{j-k}\emsmz$\,. Analogously we compute}
\loes_{\omega,\fJ-1}\hmsmkz&=(-\ie\omega)^k\loes_{\omega,\fJ-1-k}\hmsmz\qquad.
\end{align*}
\end{proof}

According to the latter lemma we only have to calculate the asymptotics of the special forms
$$\loes_{\omega,\fJ-1-k}\emsmz\qqtext{,}\loes_{\omega,\fJ-1-k}\hmsmz$$
for $\omega\in\czpomd\ohne\{0\}$\,, $0\leq k\leq\fJ-1$ and $\sigma<s-N/2-1$\,.

For this we will use a technique introduced by Weck and Witsch in \cite{asyanal},
\cite{exacttwo} and \cite{exacthigher}, which was completed in \cite{complete} resp. \cite{linelaz}.
The idea is to compare
$$\loesom\emsmz\qqtext{,}\loesom\hmsmz$$
with special radiating solutions of the homogeneous problem in $\rNon$ and then to identify the proper
static terms in their asymptotic expansions. For this procedure it is essential that the perturbation
$\Lambdad$ has got a compact support.

Let us define for $q\in\{0,\dots,N-1\}$\,, $\sigma\in\nzn$\,, $m=1,\dots$ as well as
$\omega\in\cz_+\ohne\{0\}$ and $\nu_\sigma:=N/2+\sigma$
\begin{align}
\fE^{1,\omega}_{\sigma,m}&:=\sum_{k=0}^{\infty}(-\ie\omega)^{2k}\;\turmd{q}{2k+1}{\sigma}{m}{-}
+\kappa^{q+1}_\sigma\,\omega^{2\nu_\sigma}\sum_{k=0}^{\infty}(-\ie\omega)^{2k}\;\turmd{q}{2k+1}{\sigma}{m}{+}
\quad,\mylabel{EfettEins}\\
\begin{split}
\fH^{1,\omega}_{\sigma,m}&:=\frac{\ie}{\omega}\rot\fE^{1,\omega}_{\sigma,m}\\
&\,\,=\sum_{k=0}^{\infty}(-\ie\omega)^{2k-1}\;\turmr{q+1}{2k}{\sigma}{m}{-}
+\kappa^{q+1}_\sigma\,\omega^{2\nu_\sigma}\sum_{k=0}^{\infty}(-\ie\omega)^{2k-1}\;\turmr{q+1}{2k}{\sigma}{m}{+}
\end{split}\mylabel{HfettEins}
\intertext{and}
\fH^{2,\omega}_{\sigma,m}&:=\sum_{k=0}^{\infty}(-\ie\omega)^{2k}\;\turmr{q+1}{2k+1}{\sigma}{m}{-}
+\kappa^q_\sigma\,\omega^{2\nu_\sigma}\sum_{k=0}^{\infty}(-\ie\omega)^{2k}\;\turmr{q+1}{2k+1}{\sigma}{m}{+}
\quad,\mylabel{HfettZwei}\\
\begin{split}
\fE^{2,\omega}_{\sigma,m}&:=\frac{\ie}{\omega}\pdiv\fH^{2,\omega}_{\sigma,m}\\
&\,\,=\sum_{k=0}^{\infty}(-\ie\omega)^{2k-1}\;\turmd{q}{2k}{\sigma}{m}{-}
+\kappa^q_\sigma\,\omega^{2\nu_\sigma}\sum_{k=0}^{\infty}(-\ie\omega)^{2k-1}\;\turmd{q}{2k}{\sigma}{m}{+}
\quad,\end{split}\mylabel{EfettZwei}
\end{align}
where $\kappa^q_\sigma:=2\ie\nu_\sigma4^{-\nu_\sigma}\frac{\Gamma(1-\nu_\sigma)}{\Gamma(1+\nu_\sigma)}(-1)^{\nu_\sigma+1/2+\delta_{q,0}+\delta_{q,N}}$
and $\Gamma$ denotes the gamma-function.

These series of $q$- resp. $(q+1)$-forms converge uniformly on compact subsets of $\rNon$
and there they define $\cu$-forms. Moreover, they solve
$$(M+\ie\omega)\EH=(0,0)\qqtext{and}(\Delta+\omega^2)\EH=(0,0)$$
in $\rNon$ since clearly $(\pdiv E,\rot H)=(0,0)$\,.
For real frequencies $\omega\neq0$ they fulfill Sommerfeld's (componentwise for Helmholtz' equation)
and Maxwell's radiation condition and for nonreal frequencies $\omega\in\cz_+\ohne\rz$ they decay
exponentially at infinity. Moreover, $(\fE^{n,\omega}_{\sigma,m},\fH^{n,\omega}_{\sigma,m})$\,,
$n=1,2$\,, belong to the Sobolev spaces
$\qH{k}{q,q+1}{<\meh}{}\big(A(1)\big)$ for any $k\in\nzn$ as well as
\begin{align}
\fE^{1,\omega}_{\sigma,m}&=\frac{\omega^{\nu_\sigma}}{\beta_\sigma} r^{1-\Nh} H^1_{\nu_\sigma}(\omega\,r)\tauh T^q_{\sigma,m}\qquad,\non\\
\fH^{1,\omega}_{\sigma,m}&=\frac{\omega^{\nu_\sigma-1}}{\beta_\sigma}r^{-\Nh}\bigg(-\omega^q_\sigma H^1_{\nu_\sigma}(\omega\,r)\tauh S^{q+1}_{\sigma,m}\mylabel{EHschlangeEins}\\
&\qquad+\ie\Big(\big(N/2-(q+1)'\big) H^1_{\nu_\sigma}(\omega\,r)+\omega\,r(H^1_{\nu_\sigma})'(\omega\,r)\Big)\rhoh T^q_{\sigma,m}\bigg)\non
\intertext{and}
\fE^{2,\omega}_{\sigma,m}&=\frac{\omega^{\nu_\sigma-1}}{\beta_\sigma}r^{-\Nh}\bigg(\omega^{q-1}_\sigma H^1_{\nu_\sigma}(\omega\,r)\rhoh T^{q-1}_{\sigma,m}\non\\
&\qquad+\ie\Big(\big(N/2-q\big) H^1_{\nu_\sigma}(\omega\,r)+\omega\,r(H^1_{\nu_\sigma})'(\omega\,r)\Big)\tauh S^q_{\sigma,m}\bigg)\qquad,\mylabel{EHschlangeZwei}\\
\fH^{2,\omega}_{\sigma,m}&=\frac{\omega^{\nu_\sigma}}{\beta_\sigma} r^{1-\Nh} H^1_{\nu_\sigma}(\omega\,r)\rhoh S^q_{\sigma,m}\non
\end{align}
hold, where $H^1_{\nu_\sigma}$ denotes Hankel's first function and
$\beta_\sigma:=\ie\frac{2^{\nu_\sigma}}{\Gamma(1-\nu_\sigma)}(-1)^{\nu_\sigma+1/2}$\,.
For details and proofs we refer to \cite[Sektion 5.5]{paulydiss}. Compare also with
\cite[(84)]{complete} and \cite[section 4]{linelaz}.

Now let us turn to the calculation of the asymptotics of $\loesom\emsmz$\,.
$$\eta(\fE^{1,\omega}_{\sigma,m},\fH^{1,\omega}_{\sigma,m})\in\qH{\infty}{q,q+1}{<\meh}{\circ}(\Omega)$$
(using an obvious notation) fulfills the radiation condition and solves
$$(M+\ie\omega\Lambda)\eta(\fE^{1,\omega}_{\sigma,m},\fH^{1,\omega}_{\sigma,m})=(M+\ie\omega)\eta(\fE^{1,\omega}_{\sigma,m},\fH^{1,\omega}_{\sigma,m})
=C_{M,\eta}(\fE^{1,\omega}_{\sigma,m},\fH^{1,\omega}_{\sigma,m})\qquad.$$
Hence
\beq\loesom C_{M,\eta}(\fE^{1,\omega}_{\sigma,m},\fH^{1,\omega}_{\sigma,m})=\eta(\fE^{1,\omega}_{\sigma,m},\fH^{1,\omega}_{\sigma,m})\qquad.\mylabel{loesomCEHfette}\eeq
Since $C_{M,\eta}$ has compactly supported coefficients
$$\loesom(M+\ie\omega\Lambda)C_{M,\eta}(\turmd{q}{1}{\sigma}{m}{-},0)=C_{M,\eta}(\turmd{q}{1}{\sigma}{m}{-},0)$$
holds and therefore
\beq\loesom M C_{M,\eta}(\turmd{q}{1}{\sigma}{m}{-},0)=(\id-\ie\omega\loesom)C_{M,\eta}(\turmd{q}{1}{\sigma}{m}{-},0)\qquad.\mylabel{loesMCturmd}\eeq
With \eqref{Mquadesn} we compute
\beqanon
\loesom\emsmz&=&\loesom M C_{M,\eta}(\turmd{q}{1}{\sigma}{m}{-},0)+\loesom C_{M,\eta}(0,\turmr{q+1}{0}{\sigma}{m}{-})\\
&\streltxtty{\eqref{loesMCturmd}}{=}& C_{M,\eta}(\turmd{q}{1}{\sigma}{m}{-},0)+\loesom C_{M,\eta}\big((0,\turmr{q+1}{0}{\sigma}{m}{-})-\ie\omega(\turmd{q}{1}{\sigma}{m}{-},0)\big)\\
&\streltxtty{\eqref{Mesn},\eqref{loesomCEHfette}}{=}&\emsme-\eta(0,\turmr{q+1}{0}{\sigma}{m}{-})-\ie\omega\eta(\fE^{1,\omega}_{\sigma,m},\fH^{1,\omega}_{\sigma,m})\\
&&\quad+\ie\omega\loesom C_{M,\eta}\big((\fE^{1,\omega}_{\sigma,m},\fH^{1,\omega}_{\sigma,m})
-\frac{\ie}{\omega}(0,\turmr{q+1}{0}{\sigma}{m}{-})-(\turmd{q}{1}{\sigma}{m}{-},0)\big)\\
&=&\emsme-\ie\omega\eta(\turmd{q}{1}{\sigma}{m}{-},0)\\
&&\quad+\ie\omega(\loesom C_{M,\eta}-\eta)\big((\fE^{1,\omega}_{\sigma,m},\fH^{1,\omega}_{\sigma,m})\\
&&\qquad\qquad\qquad-\frac{\ie}{\omega}(0,\turmr{q+1}{0}{\sigma}{m}{-})-(\turmd{q}{1}{\sigma}{m}{-},0)\big)\qquad.
\eeqanon
According to Lemma \ref{Projlemmae} we may write
$$\emsme=\loesn\emsmz\qqtext{,}\eta(\turmd{q}{1}{\sigma}{m}{-},0)=\emsmn=\loesn\loes\emsmz$$
and obtain
$$\loes_{\omega,1}\emsmz=\ie\omega(\loesom C_{M,\eta}-\eta)\big((\fE^{1,\omega}_{\sigma,m},\fH^{1,\omega}_{\sigma,m})
-\frac{\ie}{\omega}(0,\turmr{q+1}{0}{\sigma}{m}{-})-(\turmd{q}{1}{\sigma}{m}{-},0)\big)\qquad.$$
Now inserting the expansions \eqref{EfettEins} and \eqref{HfettEins} in each case the first term of the
$(-)$-series of $\fE^{1,\omega}_{\sigma,m}$ resp. $\fH^{1,\omega}_{\sigma,m}$ is killed and we achieve
\begin{align}
\begin{split}
\loes_{\omega,1}\emsmz&=(\eta-\loesom C_{M,\eta})
\big(\sum_{k=1}^{\infty}(-\ie\omega)^{2k}(M-\ie\omega)(\turmd{q}{2k+1}{\sigma}{m}{-},0)\\
&\qquad\qquad+\kappa_\sigma\,\omega^{N+2\sigma}\sum_{k=0}^{\infty}(-\ie\omega)^{2k}(M-\ie\omega)(\turmd{q}{2k+1}{\sigma}{m}{+},0)\big)
\end{split}\mylabel{loesomeinsesm}
\end{align}
with $\kappa_\sigma:=\kappa^{q+1}_\sigma=2\ie\nu_\sigma4^{-\nu_\sigma}\frac{\Gamma(1-\nu_\sigma)}{\Gamma(1+\nu_\sigma)}(-1)^{\nu_\sigma+1/2}$\,.

Since the series converge locally uniformly for $\omega\in\czpomd\ohne\{0\}$ in $\rNon$\,,
they converge in particular in $\Lzqqpe{\loc}\big(\ol{\Omega}\big)$\,.
Consequently, the series $C_{M,\eta}\sum\dots$ converge in $\Lzqqpesom$ for all $s\in\rz$ because of
the compact support of $C_{M,\eta}$\,. Therefore, the continuity of $\loesom$ yields the convergence of the series
$\loesom C_{M,\eta}\sum\dots=\sum\loesom C_{M,\eta}\dots$ in $\Lzqqpeom{<\meh}$\,.

Let $\omb$ denote a bounded subdomain of $\om$ with $\supp\nabla\eta\subset\omb$\,. We look at
$$\fg:=C_{M,\eta}(M-\ie\omega)(\turmd{q}{2k+1}{\sigma}{m}{\pm},0)=(C_{\pdiv,\eta}\turmr{q+1}{2k}{\sigma}{m}{\pm},-\ie\omega C_{\rot,\eta}\turmd{q}{2k+1}{\sigma}{m}{\pm})\quad.$$
By \paulystaticremodd\, we get
$$\pdiv f=-\pdiv\eta\turmd{q}{2k-1}{\sigma}{m}{\pm}=0\qtext{,}\rot g=\ie\omega\rot\eta\turmr{q+1}{2k}{\sigma}{m}{\pm}=\ie\omega C_{\rot,\eta}\turmr{q+1}{2k}{\sigma}{m}{\pm}$$
and moreover $\fg$ is perpendicular to $\bonqom\times\bqpeom$ because $\supp\fg\subset\supp\nabla\eta$\,.
Furthermore, every $\norm{\,\cdot\,}_{\Lzqqpesom}$-norm is equivalent to the
$\norm{\,\cdot\,}_{\Lzqqpe{}(\omb)}$-norm for $\fg$\,.
\paulystaticcoefesti\, and \paulytimeharmPzeroiii\, yield (with generic constants $c>0$)

\begin{align*}
\normb{\loesom\fg}_{\Lzqqpe{}(\omb)}&\leq c\,\Big(\normb{\fg}_{\Lzqqpesom}+\norm{C_{\rot,\eta}\turmr{q+1}{2k}{\sigma}{m}{\pm}}_{\Lzqqpesom}\Big)\\
&\leq c\,\big(\norm{\turmd{q}{2k+1}{\sigma}{m}{\pm}}_{\Lzqqpe{}(\omb)}+\norm{\turmr{q+1}{2k}{\sigma}{m}{\pm}}_{\Lzqqpe{}(\omb)}\big)\leq c
\end{align*}
and thus also
$$\normb{(\eta-\loesom C_{M,\eta})(M-\ie\omega)(\turmd{q}{2k+1}{\sigma}{m}{\pm},0)}_{\Lzqqpe{}(\omb)}\leq c$$
all uniformly in $k$ and $\sigma,m$ as well as $\omega$ \big(See \paulystaticcoefesti\big).
For $K>\fJ$ we obtain by \eqref{loesomeinsesm}
$$\bigg|\normabst\bigg|\loes_{\omega,1}\emsmz-(\eta-\loesom C_{M,\eta})
\Big(\sum_{k=1}^{K-1}(-\ie\omega)^{2k}(M-\ie\omega)(\turmd{q}{2k+1}{\sigma}{m}{-},0)$$
$$\qquad\qquad+\kappa_\sigma\,\omega^{N+2\sigma}\sum_{k=0}^{K-1}(-\ie\omega)^{2k}
(M-\ie\omega)(\turmd{q}{2k+1}{\sigma}{m}{+},0)\Big)\bigg|\normabst\bigg|_{\Lzqqpe{}(\omb)}$$
$$\leq c\,\sum_{k=K}^{\infty}|\omega|^{2k}\leq c\,|\omega|^{2K}\qquad.$$
Once again let us introduce a new short notation:\mylabel{tildeell}
$$u\psim{\ell}v\quad:\equi\quad\norm{u-v}_{\Lzqqpe{}(\omb)}\leq c\,|\omega|^\ell\qtext{uniformly w. r. t.}\omega\in\czpomd\ohne\{0\}$$
Using this new notation we have shown so far
\begin{align}
\begin{split}
\loes_{\omega,1}\emsmz&\psim{2K}
\sum_{k=1}^{K-1}(-\ie\omega)^{2k}(\eta-\loesom C_{M,\eta})(M-\ie\omega)(\turmd{q}{2k+1}{\sigma}{m}{-},0)\\
&\qquad+\kappa_\sigma\,\omega^{N+2\sigma}\sum_{k=0}^{K-1}(-\ie\omega)^{2k}(\eta-\loesom C_{M,\eta})(M-\ie\omega)(\turmd{q}{2k+1}{\sigma}{m}{+},0)
\end{split}\mylabel{loesomeinsesmendl}
\end{align}
and the only unknown $\omega$-behavior is hidden in the terms
$$\loesom C_{M,\eta}(M-\ie\omega)(\turmd{q}{2k+1}{\sigma}{m}{\pm},0)\qquad.$$
Using $C_{M^2,\eta}=M C_{M,\eta}+C_{M,\eta} M$ and \paulystaticremodd\, we compute
\begin{align}
&\qquad\loesom C_{M,\eta}(M-\ie\omega)(\turmd{q}{2k+1}{\sigma}{m}{\pm},0)\non\\
&=\loesom C_{M^2,\eta}(\turmd{q}{2k+1}{\sigma}{m}{\pm},0)-\loesom(M+\ie\omega)C_{M,\eta}(\turmd{q}{2k+1}{\sigma}{m}{\pm},0)\non\\
&=\loesom C(\turmd{q}{2k+1}{\sigma}{m}{\pm},0)-\loesom(M+\ie\omega\Lambda)C_{M,\eta}(\turmd{q}{2k+1}{\sigma}{m}{\pm},0)\non\\
&=\loesom C(\turmd{q}{2k+1}{\sigma}{m}{\pm},0)-C_{M,\eta}(\turmd{q}{2k+1}{\sigma}{m}{\pm},0)\non
\intertext{and then for $k\in\nzn$}
\begin{split}
&\qquad(\eta-\loesom C_{M,\eta})(M-\ie\omega)(\turmd{q}{2k+1}{\sigma}{m}{\pm},0)\\
&=-\loesom C(\turmd{q}{2k+1}{\sigma}{m}{\pm},0)+(M-\ie\omega)\eta(\turmd{q}{2k+1}{\sigma}{m}{\pm},0)\qquad.
\end{split}\mylabel{loesomCturmdzkpe}
\end{align}
If $k\geq1$ we have
\begin{align*}
M^2\eta(\turmd{q}{2k+1}{\sigma}{m}{\pm},0)
&=C_{M^2,\eta}(\turmd{q}{2k+1}{\sigma}{m}{\pm},0)+\eta(\turmd{q}{2k-1}{\sigma}{m}{\pm},0)\\
&=C(\turmd{q}{2k+1}{\sigma}{m}{\pm},0)+\eta(\turmd{q}{2k-1}{\sigma}{m}{\pm},0)\in\regqom{0}{\loc}\qquad.
\end{align*}
We note once more
\beq\eta(\turmd{q}{2k+1}{\sigma}{m}{\pm},0)\in\regqom{0}{\loc}\mylabel{etaturmdinregloc}\eeq
by \paulystaticremodd\,. Thus according to \paulystaticitloes\, $\loes^2$ may be applied to
$ M^2\eta(\turmd{q}{2k+1}{\sigma}{m}{\pm},0)$ and we obtain
$$\eta(\turmd{q}{2k+1}{\sigma}{m}{\pm},0)=\loes^2\big(C(\turmd{q}{2k+1}{\sigma}{m}{\pm},0)+\eta(\turmd{q}{2k-1}{\sigma}{m}{\pm},0)\big)\qquad.$$
By \eqref{etaturmdinregloc} $\loes$ and $\loes^2$ are even well defined on
$\eta(\turmd{q}{2k-1}{\sigma}{m}{\pm},0)$\,, such that
$$\eta(\turmd{q}{2k+1}{\sigma}{m}{\pm},0)=\loes^2C(\turmd{q}{2k+1}{\sigma}{m}{\pm},0)+\loes^2\eta(\turmd{q}{2k-1}{\sigma}{m}{\pm},0)$$
holds. A short induction shows
$$\eta(\turmd{q}{2k+1}{\sigma}{m}{\pm},0)=\sum_{\ell=1}^{k}\loes^{2\ell}C(\turmd{q}{2k+3-2\ell}{\sigma}{m}{\pm},0)+\loes^{2k}\eta(\turmd{q}{1}{\sigma}{m}{\pm},0)\qquad,$$
and hence using $\Lambda^\me=\id$ on $\supp\eta$
\begin{align*}
\eta(\turmd{q}{2k+1}{\sigma}{m}{\pm},0)
&=\sum_{\ell=1}^{k}\loesn\loes^{2\ell-1}C(\turmd{q}{2k+3-2\ell}{\sigma}{m}{\pm},0)
+\loesn\loes^{2k-1}\eta(\turmd{q}{1}{\sigma}{m}{\pm},0)\qquad,\\
M\eta(\turmd{q}{2k+1}{\sigma}{m}{\pm},0)
&=\sum_{\ell=1}^{k}\loesn\loes^{2\ell-2}C(\turmd{q}{2k+3-2\ell}{\sigma}{m}{\pm},0)
+\loesn\loes^{2k-2}\eta(\turmd{q}{1}{\sigma}{m}{\pm},0)\qquad.
\end{align*}
We remind of $\eta(\turmd{q}{1}{\sigma}{m}{\pm},0)=\epmsmn$ and $C(\turmd{q}{1}{\sigma}{m}{\pm},0)=\epmsmz$\,.
Putting all together yields for $k\geq1$
\begin{align}
&\qquad(M-\ie\omega)\eta(\turmd{q}{2k+1}{\sigma}{m}{\pm},0)\non\\
&=(\loesn\loes^{2k-2}-\ie\omega\loesn\loes^{2k-1})\epmsmn\mylabel{mMpiomneta}\\
&\qquad\qquad\qquad+\sum_{\ell=1}^{k}(\loesn\loes^{2\ell-2}-\ie\omega\loesn\loes^{2\ell-1})
C(\turmd{q}{2k+3-2\ell}{\sigma}{m}{\pm},0)\qquad.\non
\end{align}
Inserting this formula into \eqref{loesomCturmdzkpe} and all this together into \eqref{loesomeinsesmendl}
we obtain
\begin{align}\begin{split}
&\qquad\loes_{\omega,1}\emsmz\\
&\psim{2K}\bSm+\kappa_\sigma\,\omega^{N+2\sigma}\bSp
+\kappa_\sigma\,\omega^{N+2\sigma}\big(-\loesom\epmsmz+(M-\ie\omega)\epmsmn\big)\qquad,
\end{split}\mylabel{loesomeinsesmendlzwei}\end{align}
where $\bSpm:=-\bSpme+\bSpmz+\bSpmd$ and
\begin{align*}
\bSpme&:=\sum_{k=1}^{K-1}(-\ie\omega)^{2k}\loesom C(\turmd{q}{2k+1}{\sigma}{m}{\pm},0)\qquad,\\
\bSpmz&:=\sum_{k=1}^{K-1}(-\ie\omega)^{2k}(\loesn\loes^{2k-2}-\ie\omega\loesn\loes^{2k-1})\epmsmn\qquad,\\
\bSpmd&:=\sum_{k=1}^{K-1}(-\ie\omega)^{2k}\sum_{\ell=1}^{k}
(\loesn\loes^{2\ell-2}-\ie\omega\loesn\loes^{2\ell-1})C(\turmd{q}{2k+3-2\ell}{\sigma}{m}{\pm},0)\qquad.
\end{align*}
Obviously
\beq\bSpmz=\sum_{k=2}^{2K-1}(-\ie\omega)^k\loesn\loes^{k-2}\epmsmn\qquad.\mylabel{Spmzeins}\eeq
In the double sums $\bSpmd$ we substitute $\ell$ by $j(\ell):=k-\ell+1$\,,
interchange the sums and again substitute $k$ by $i(k):=k-j$\,.
Then we denote the pair $(j,i)$ again by $(k,\ell)$\,. We get
\begin{align*}
\bSpmd&=\sum_{k=1}^{K-1}(-\ie\omega)^{2k}\sum_{\ell=0}^{2K-2k-1}
(-\ie\omega)^\ell\loesn\loes^\ell C(\turmd{q}{2k+1}{\sigma}{m}{\pm},0)
\intertext{and thus}
\bSpme-\bSpmd&=\sum_{k=1}^{K-1}
(-\ie\omega)^{2k}\loes_{\omega,2K-2k-1}C(\turmd{q}{2k+1}{\sigma}{m}{\pm},0)\qquad.
\end{align*}
We have $C(\turmd{q}{2k+1}{\sigma}{m}{\pm},0)\in\regqom{0}{\vox}$ and also for all $k\geq1$ and $j\leq2K$
as well as $\tilde{s}\in(2K-N/2,\infty)\ohne\pI$ according to Lemma \ref{Regortho}
$$C(\turmd{q}{2k+1}{\sigma}{m}{\pm},0)\in\regqom{j}{\tilde{s}}\qquad,$$
because for all $(\ell,\gamma,n)$ with $\ell\leq2K$ we may compute
$$\skpboml{C(\turmd{q}{2k+1}{\sigma}{m}{\pm},0)}{\Egnl}
=\skpboml{C(\turmd{q}{2k+1}{\sigma}{m}{\pm},0)}{\Hgnl}=0$$
using Lemma \ref{Tuermeorthoreg}, the expansions from Corollary \ref{loesEsmHgnkor}
and observing $2k+1\geq3$\,. In particular for $1\leq k\leq K-1$
$$C(\turmd{q}{2k+1}{\sigma}{m}{\pm},0)\in\regqom{2K-2k}{\tilde{s}}$$
holds. Therefore Theorem \ref{Regsatz} (i) yields uniformly in $\omega$
\big(and $k,\sigma,m$ by \paulystaticcoefesti\big)
\begin{align*}
&\qquad\normb{\loes_{\omega,2K-2k-1}C(\turmd{q}{2k+1}{\sigma}{m}{\pm},0)}_{\Lzqqpe{}(\omb)}\\
&\leq c\,|\omega|^{2K-2k}\normb{C(\turmd{q}{2k+1}{\sigma}{m}{\pm},0)}_{\Lzqqpeom{\tilde{s}}}
\leq c\,|\omega|^{2K-2k}
\end{align*}
and we obtain
$$\bSpme-\bSpmd\psim{2K}(0,0)\qquad.$$
Since $\emsmn=\loes^2\emsmz$ by Lemma \ref{Projlemmae}
we see from \eqref{Spmzeins}
$$\bSm_\text{II}=\sum_{k=2}^{2K-1}(-\ie\omega)^k\loesn\loes^k\emsmz$$
and inserting all in \eqref{loesomeinsesmendlzwei} \big(using \eqref{Spmzeins} again\big) we arrive at
\begin{align}
&\qquad\loes_{\omega,2K-1}\emsmz=\loes_{\omega,1}\emsmz-\bSm_\text{II}\non\\
&\psim{2K}\kappa_\sigma\,\omega^{N+2\sigma}\Big(\big(\sum_{k=2}^{2K-1}
(-\ie\omega)^k\loesn\loes^{k-2}+(M-\ie\omega)\big)\epsmn-\loesom\epsmz\Big)\qquad.\non
\intertext{For each $\nz\ni j\leq\fJ+1$ choosing some $K\in\nzn$ with $2K\geq j$ we finally obtain}
\begin{split}
&\qquad\loes_{\omega,j-1}\emsmz\\
&\psim{j}\kappa_\sigma\,\omega^{N+2\sigma}\Big(\big(\sum_{k=2}^{j-N-2\sigma-1}
(-\ie\omega)^k\loesn\loes^{k-2}+(M-\ie\omega)\big)\epsmn-\loesom\epsmz\Big)\qquad.
\end{split}\mylabel{loesomjmeMquadesm}
\end{align}
It remains to identify the terms. The equation
$$\epsme=M\eta(\turmd{q}{1}{\sigma}{m}{+},0)=C_{M,\eta}(\turmd{q}{1}{\sigma}{m}{+},0)+\eta(0,\turmr{q+1}{0}{\sigma}{m}{+})\qquad,$$
\eqref{etaturmdinregloc} and \paulystaticitloes\, show
\beq\loesn\epsme=\epsmn\qquad.\mylabel{loesMetaeta}\eeq
Using \eqref{loesMetaeta} in \eqref{loesomjmeMquadesm} we get
\begin{align}\begin{split}
&\qquad\loes_{\omega,j-1}\emsmz\\
&\psim{j}\kappa_\sigma\,\omega^{N+2\sigma}\big(\sum_{k=0}^{j-N-2\sigma-1}(-\ie\omega)^k\Lambda^\me\loes^k
\epsme-\loesom\epsmz\big)\qquad.
\end{split}\mylabel{loesomjmeMquadesmz}\end{align}
Since $\epsmz\in\regqom{0}{\vox}$ we may look at
$$(0,h):=\epsme-\loesn\epsmz$$
utilizing \paulystaticitloescor. With $M(0,h)=(0,0)$ and $\rot\mu h=0$ we have
$$\mu h\in\dH{q+1}{<-\Nh-\sigma}{\mu^\me}(\Omega)\cap\bqpeom^\bot\qqtext{,}h-\turmr{q+1}{0}{\sigma}{m}{+}\in\Lzqpeom{>-\Nh}\qquad.$$
Hence $\Hsm=h$ by Lemma \ref{EsmHgnlemma}\,. Finally \eqref{loesomjmeMquadesmz} turns to
\begin{align}
&\qquad\loes_{\omega,j-1}\emsmz\non\\
&\psim{j}\,\kappa_\sigma\,\omega^{N+2\sigma}\big(\sum_{k=0}^{j-N-2\sigma-1}(-\ie\omega)^k\Lambda^\me\Hsmk
-\loes_{\omega,j-N-2\sigma-1}\epsmz\big)\mylabel{loesomjmeMquadesmd}\\
&=:\kappa_\sigma\,\omega^{N+2\sigma}\cA^{j-N-2\sigma-1}_{\omega,\sigma,m}\qquad.\non
\intertext{Similar calculations but using the forms $(\fE^{2,\omega}_{\sigma,m},\fH^{2,\omega}_{\sigma,m})$
from \eqref{EfettZwei}, \eqref{HfettZwei} and looking at $(0,\turmr{q+1}{2k+1}{\sigma}{m}{\pm})$
yield a corresponding estimate for $\loesom\hmsmz$\,, i.e.}
&\qquad\loes_{\omega,j-1}\hmsmz\non\\
&\psim{j}\,\kappa_\sigma\omega^{N+2\sigma}\big(\sum_{k=0}^{j-N-2\sigma-1}(-\ie\omega)^k\Lambda^\me\Esmk
-\loes_{\omega,j-N-2\sigma-1}\hpsmz\big)\mylabel{loesomjmeMquadhsmr}\\
&=:\kappa_\sigma\,\omega^{N+2\sigma}\cB^{j-N-2\sigma-1}_{\omega,\sigma,m}\non\qquad.
\end{align}

\begin{lem}\mylabel{asymz}
Let $\fJ\in\nzn$ and $s\in(\fJ+1/2,\fJ+N/2)\ohne\pI$\,. Then for all bounded subdomains $\omb$ of $\om$
\begin{align*}
&\qquad\Big|\normabst\Big|\loes_{\omega,\fJ-1}\FG\\
&-\sum_{(k,\sigma,m)\in\tilde{\Theta}^q_{\fJ-1-N}}(-\ie\omega)^{N+k}\kappa_{k,\sigma}
\skpboml{\FG}{\Esmkmzse}\cB^{\fJ-1-N-k}_{\omega,\sigma,m}\\
&-\sum_{(k,\sigma,m)\in\tilde{\Theta}^{q+1}_{\fJ-1-N}}(-\ie\omega)^{N+k}\kappa_{k,\sigma}
\skpboml{\FG}{\Hsmkmzse}\cA^{\fJ-1-N-k}_{\omega,\sigma,m}\Big|\normabst\Big|_{\Lzqqpe{}(\omb)}\\
&\qquad\qquad\qquad\qquad=\calO\big(|\omega|^\fJ\big)\normb{\FG}_{\Lzqqpesom}
\end{align*}
holds uniformly with respect to $\omega\in\czpomd\ohne\{0\}$ and $\FG\in\regqnsom$\,.
Here we set $\kappa_{k,\sigma}:=\ie^{2k-2\sigma+N}\kappa_\sigma$ and $\tilde{\Theta}^q_j:=\setb{(k,\sigma,m)\in\nz^3_0}{2\sigma\leq k\leq j\,\wedge\,1\leq m\leq\mu^q_\sigma}$\,.
In particular for $j\leq\min\{\fJ,N\}$
$$\normb{\loes_{\omega,j-1}\FG}_{\Lzqqpe{}(\omb)}=\calO\big(|\omega|^j\big)\cdot\normb{\FG}_{\Lzqqpesom}\qquad.$$
\end{lem}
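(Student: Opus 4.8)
The plan is to feed the two asymptotic expansions \eqref{loesomjmeMquadesmd} and \eqref{loesomjmeMquadhsmr} for the special forms $\loes_{\omega,j-1}\emsmz$ and $\loes_{\omega,j-1}\hmsmz$ into the reduction already achieved in Lemma \ref{asyme}, and then to clean up by controlling the remaining errors and relabelling the summation indices. I would fix once and for all a bounded subdomain $\omb$ with $\supp\nabla\eta\subset\omb$ and work throughout in the local norm $\norm{\,\cdot\,}_{\Lzqqpe{}(\omb)}$\,. Before starting I would record two boundedness facts that make all errors uniform: first, the index sets $\Theta^{q,\fJ}_s$ and $\Theta^{q+1,\fJ}_s$ are finite because $s<\fJ+N/2$ forces $\sigma<\fJ-1$\,; second, by Remark \ref{Regorthobem} the forms $\Esmke,\Hsmke$ lie in $\Lzqqpeom{-s}$\,, so the functionals $\FG\mapsto\skpboml{\FG}{\Esmke}$ and $\FG\mapsto\skpboml{\FG}{\Hsmke}$ are bounded on $\Lzqqpesom$ by a generic constant, and \paulystaticcoefesti\ supplies estimates uniform in $\sigma,m$\,.

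First I would invoke Lemma \ref{asyme}, which already absorbs the `regular' part into $\calO(|\omega|^\fJ)$ and leaves only the two finite sums over $\loes_{\omega,\fJ-1-k}\hmsmz$ and $\loes_{\omega,\fJ-1-k}\emsmz$\,. Into each summand I would substitute \eqref{loesomjmeMquadhsmr} resp.\ \eqref{loesomjmeMquadesmd} with $j:=\fJ-k\in\{1,\dots,\fJ\}$\,, namely
\begin{align*}
\loes_{\omega,\fJ-1-k}\hmsmz&\psim{\fJ-k}\kappa_\sigma\,\omega^{N+2\sigma}\cB^{\fJ-1-N-2\sigma-k}_{\omega,\sigma,m}\qquad,\\
\loes_{\omega,\fJ-1-k}\emsmz&\psim{\fJ-k}\kappa_\sigma\,\omega^{N+2\sigma}\cA^{\fJ-1-N-2\sigma-k}_{\omega,\sigma,m}\qquad,
\end{align*}
reading the defining sum of $\cA,\cB$ as empty and the surviving $\loes_{\omega,\cdot}$-term as $\loesom$ when the upper index is negative. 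Since each of the forms $\cA^{\fJ-1-N-2\sigma-k}_{\omega,\sigma,m}$ and $\cB^{\fJ-1-N-2\sigma-k}_{\omega,\sigma,m}$ is bounded in the local norm uniformly in $\omega,\sigma,m$\,, multiplying the error $\calO(|\omega|^{\fJ-k})$ by $(\ie\omega)^k$ and by the bounded coefficient $\skpboml{\FG}{\,\cdot\,}$ upgrades it to $\calO(|\omega|^\fJ)\normb{\FG}_{\Lzqqpesom}$\,; summing over the finitely many indices keeps the total error at this order.

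The bookkeeping step is to reorganise the surviving main terms into the sums of the statement. In the $\Esmke$-sum each summand carries the scalar factor $(\ie\omega)^k\kappa_\sigma\omega^{N+2\sigma}$\,; whenever $N+2\sigma+k\geq\fJ$ this factor is $\calO(|\omega|^\fJ)$ and the summand drops into the error, so only $k\leq\fJ-1-N-2\sigma$ survives. In these I would set $\tilde{k}:=k+2\sigma$\,, so that $\tilde{k}$ runs precisely through $\tilde{\Theta}^q_{\fJ-1-N}$\,, the $\cB$-superscript becomes $\fJ-1-N-\tilde{k}$\,, the scalar product becomes $\skpboml{\FG}{\Esmkmzse}$\,, and I would check that the prefactor matches, i.e.\ $\ie^{\tilde{k}-2\sigma}\omega^{N+\tilde{k}}\kappa_\sigma=(-\ie\omega)^{N+\tilde{k}}\kappa_{\tilde{k},\sigma}$\,, which reduces to the identity $(-1)^{N+\tilde{k}}\ie^{2(N+\tilde{k})}=1$ after inserting $\kappa_{\tilde{k},\sigma}=\ie^{2\tilde{k}-2\sigma+N}\kappa_\sigma$\,. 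The $\Hsmke$-sum is handled the same way via \eqref{loesomjmeMquadesmd}, yielding the $\cA$-sum over $\tilde{\Theta}^{q+1}_{\fJ-1-N}$\,. For the `in particular' claim I would simply note that $j\leq N$ makes $j-1-N<0$\,, so both sets $\tilde{\Theta}^q_{j-1-N}$ and $\tilde{\Theta}^{q+1}_{j-1-N}$ are empty and the main estimate (with $\fJ$ replaced by $j$) collapses to the asserted bound $\normb{\loes_{\omega,j-1}\FG}_{\Lzqqpe{}(\omb)}=\calO(|\omega|^j)\normb{\FG}_{\Lzqqpesom}$\,.

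I expect the main obstacle to be purely the uniformity bookkeeping rather than any new idea: one must make sure that the approximation error of each of the finitely many, $\sigma$-dependent summands, once weighted by $(\ie\omega)^k$ and by the scalar functional, is genuinely $\calO(|\omega|^\fJ)$ uniformly in $\omega\in\czpomd\ohne\{0\}$ and in $\FG$\,. This hinges on the $\sigma,m$-uniform coefficient estimates of \paulystaticcoefesti\ together with the boundedness of the functionals $\skpboml{\FG}{\,\cdot\,}$ coming from $\Esmke,\Hsmke\in\Lzqqpeom{-s}$\,; the index substitution $\tilde{k}=k+2\sigma$ and the little $\ie$-power identity are then routine.
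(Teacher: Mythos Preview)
Your argument for the first assertion is correct and follows the paper's own proof almost verbatim: insert \eqref{loesomjmeMquadesmd}, \eqref{loesomjmeMquadhsmr} into Lemma \ref{asyme} with $j:=\fJ-k$, drop the summands with $k+2\sigma+N\geq\fJ$ into the error, and reindex via $\tilde{k}:=k+2\sigma$. Your verification of the prefactor identity is exactly the bookkeeping the paper leaves implicit. One small point you state without proof is the local boundedness of $\cA^\ell_{\omega,\sigma,m},\cB^\ell_{\omega,\sigma,m}$ uniformly in $\omega$; this is not automatic, since these contain $\loesom\epsmz$ and $\loesom\hpsmz$. The paper supplies the missing reason: $\epsmz,\hpsmz\in\regqom{0}{\vox}$ by \eqref{CDCReinsinregvox}, so \paulytimeharmPzeroiv\ gives the uniform bound.

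Your argument for the ``in particular'' clause has a genuine gap. You propose to apply the first assertion with $\fJ$ replaced by $j\leq\min\{\fJ,N\}$, noting that then $\tilde{\Theta}^q_{j-1-N}=\emptyset$. But the first assertion is proved under the hypothesis $s\in(\fJ+1/2,\fJ+N/2)$; replacing $\fJ$ by $j<\fJ$ would require $s<j+N/2$, which need not hold for the given $s$ (for instance $\fJ=2$, $N=3$, $s=3$, $j=1$). The paper avoids this by arguing \emph{at order $\fJ$}: from the boundedness of $\cA,\cB$ just mentioned, every correction term in the first assertion is $\calO(|\omega|^{N})\normb{\FG}_{\Lzqqpesom}$, hence $\loes_{\omega,\fJ-1}\FG=\calO(|\omega|^{\min\{\fJ,N\}})\normb{\FG}_{\Lzqqpesom}$ in $\Lzqqpe{}(\omb)$; for $j<\fJ$ one then writes $\loes_{\omega,j-1}=\loes_{\omega,\fJ-1}+\sum_{i=j}^{\fJ-1}(-\ie\omega)^i\loesn\loes^i$ and uses the continuity of $\loesn\loes^i$ on $\regqnsom$ (into the local space) from \paulystaticitloescor. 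An alternative repair of your approach is to pick $\tilde{s}\in(j+1/2,j+N/2)\setminus\pI$ with $\tilde{s}\leq s$, apply the first assertion at order $j$ with weight $\tilde{s}$, and then use $\normb{\FG}_{\Lzqqpeom{\tilde{s}}}\leq\normb{\FG}_{\Lzqqpesom}$.
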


\begin{proof}
We insert the asymptotics of \eqref{loesomjmeMquadesmd}, \eqref{loesomjmeMquadhsmr}
in the estimates of Lemma \ref{asyme} with $j:=\fJ-k$\,.
The sums range over $\Theta^{q,\fJ}_s$ and $\Theta^{q+1,\fJ}_s$\,.
In particular for $(k,\sigma,m)\in\Theta^{q,\fJ}_s$ we get
$$0\leq k\leq\fJ-1\qqtext{,}0\leq\sigma<s-N/2-k-1\qqtext{,}1\leq m\leq\mu^q_\sigma\qquad.$$
Additionally we have the condition $k+2\sigma+N\leq\fJ-1$ since higher order terms may be swallowed
by the $\calO$-term. Because $\fJ+1/2<s<\fJ+N/2$ we only sum over
\begin{align*}
0\leq k&\leq\fJ-1-N\qquad,\\
0\leq\sigma&\leq\min\Big\{s-\Nh-k-1,\frac{\fJ-1-N-k}{2}\Big\}=\frac{\fJ-1-N-k}{2}\qquad,\\
1\leq m&\leq\mu^q_\sigma\qquad.
\end{align*}
We interchange the sums over $k$ and $\sigma$\,, set $\ell(k):=k+2\sigma$\,,
interchange $\sigma$ and $\ell$ and finally denote $\ell$ again by $k$\,.
This proves the first assertion. Once more recalling
\beq\epsmz\,,\,\hpsmz\in\regqom{0}{\vox}\mylabel{CDCReinsinregvox}\eeq
we apply \paulytimeharmPzeroiv\, and get
$$\loesom\epsmz\,,\,\loesom\hpsmz\psim{0}(0,0)\qquad.$$
Thus $\cA^{\ell}_{\omega,\sigma,m}\,,\,\cB^{\ell}_{\omega,\sigma,m}\psim{0}(0,0)$\,,
which yields the second assertion.
\end{proof}

In the following we often use without further reference an uniqueness result for
asymptotic expansions.

\begin{lem}\mylabel{eindeutigasym}
Let $\tilde{L},L\in\nzn$ and $x_{-\tilde{L}},\dots,x_L$ be elements of some normed space $X$\,.
Moreover, let
$$\norm{\sum_{\ell=-\tilde{L}}^{L}\omega^\ell x_\ell}_X=o\big(|\omega|^L\big)$$
hold uniformly with respect to $\czp\ohne\{0\}\ni\omega\to0$\,. Then all $x_\ell$ vanish.
\end{lem}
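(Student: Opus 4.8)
The plan is to argue by induction on the lowest surviving index, exploiting that the expression is a finite Laurent polynomial in $\omega$ with coefficients in $X$. The key mechanism is simple: factor out the smallest power of $\omega$ that is still present, divide the hypothesis by it, and let $\omega\to0$, so that continuity of the norm isolates the leading coefficient. One begins the elimination at the most singular power $\omega^{-\tilde{L}}$ and proceeds upward.

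More precisely, I would show by induction, starting from the most negative index and advancing upward, that $x_k=0$ for $k=-\tilde{L},-\tilde{L}+1,\dots,L$. Suppose $x_{-\tilde{L}}=\dots=x_{k-1}=0$ has already been established for some $k\in\{-\tilde{L},\dots,L\}$ (the case $k=-\tilde{L}$ being the induction start, where no coefficient has yet been shown to vanish). Then the hypothesis reduces to $\norm{\sum_{\ell=k}^{L}\omega^\ell x_\ell}_X=o\big(|\omega|^L\big)$. Pulling out the factor $\omega^k$ and using $|\omega^k|=|\omega|^k$ gives
$$\norm{\sum_{\ell=k}^{L}\omega^{\ell-k}x_\ell}_X=o\big(|\omega|^{L-k}\big)\qquad.$$
Since $k\leq L$, the exponent $L-k$ is nonnegative, so the right-hand side tends to $0$ as $\czp\ohne\{0\}\ni\omega\to0$. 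On the other hand, all summands with $\ell>k$ carry a positive power of $\omega$ and hence vanish in the limit, while the $\ell=k$ term is the constant $x_k$; by continuity of the norm the left-hand side converges to $\norm{x_k}_X$. Comparing both limits yields $\norm{x_k}_X=0$, i.e. $x_k=0$, which advances the induction and, upon reaching $k=L$, finishes the proof.

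No step here is genuinely delicate; the only points requiring a little care are to start the elimination at the bottom power $\omega^{-\tilde{L}}$ rather than at the top, and to track how the $o$-order drops from $|\omega|^L$ to $|\omega|^{L-k}$ after dividing by $\omega^k$, so that the remainder still tends to zero (which holds precisely because $k\leq L$). Because only integer powers $\omega^\ell$ occur and $\omega\neq0$ throughout, allowing $\omega$ to be complex in the closed upper half plane causes no difficulty: the argument uses solely $\omega\to0$ together with $|\omega|\to0$, and never any branch of a fractional power.
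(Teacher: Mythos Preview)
Your proof is correct. The paper itself does not supply a proof of this lemma at all; it simply states it and moves on, treating it as an elementary fact. Your induction from the lowest power upward, factoring out $\omega^k$ and letting $\omega\to0$ to isolate the coefficient $x_k$, is exactly the standard argument one would expect, and nothing more is needed here.
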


According to \eqref{loesomjmeMquadesmd}, \eqref{loesomjmeMquadhsmr} we have
$$\cA^k_{\omega,\sigma,m}=\bX^k_{\sigma,m}(\omega)-\loes_{\omega,k}\epsmz\qtext{,}
\cB^k_{\omega,\sigma,m}=\bY^k_{\sigma,m}(\omega)-\loes_{\omega,k}\hpsmz$$
with polynomials
\beq\bX^k_{\sigma,m}(\omega):=\sum_{\ell=0}^k(-\ie\omega)^\ell\Lambda^\me\Hsml\qtext{,}
\bY^k_{\sigma,m}(\omega):=\sum_{\ell=0}^k(-\ie\omega)^\ell\Lambda^\me\Esml\mylabel{bXbYDefi}\eeq
of degree $k$ in $\omega$\,.
By \eqref{CDCReinsinregvox} we may apply Lemma \ref{asymz} with $\fJ:=j$ to
$$\FG:=\epgnz\,,\,\hpgnuz\qquad,$$
which yields the asymptotics
\begin{align}
&\qquad\loes_{\omega,j-1}\epgnz\non\\
&\psim{j}(-\ie\omega)^N\sum_{(k,\sigma,m)\in\tilde{\Theta}^q_{j-1-N}}
(-\ie\omega)^k\beta^{k,\sigma,m}_{e,\gamma,n}\big(\bY^{j-1-N-k}_{\sigma,m}(\omega)
-\loes_{\omega,j-1-N-k}\hpsmz\big)\mylabel{asymCDXYloesom}\\
&\qquad+(-\ie\omega)^N\sum_{(k,\sigma,m)\in\tilde{\Theta}^{q+1}_{j-1-N}}
(-\ie\omega)^k\alpha^{k,\sigma,m}_{e,\gamma,n}\big(\bX^{j-1-N-k}_{\sigma,m}(\omega)
-\loes_{\omega,j-1-N-k}\epsmz\big)\non
\intertext{and}
&\qquad\loes_{\omega,j-1}\hpgnuz\non\\
&\psim{j}(-\ie\omega)^N\sum_{(k,\sigma,m)\in\tilde{\Theta}^q_{j-1-N}}
(-\ie\omega)^k\beta^{k,\sigma,m}_{h,\gamma,\nu}\big(\bY^{j-1-N-k}_{\sigma,m}(\omega)
-\loes_{\omega,j-1-N-k}\hpsmz\big)\mylabel{asymCRXYloesom}\\
&\qquad+(-\ie\omega)^N\sum_{(k,\sigma,m)\in\tilde{\Theta}^{q+1}_{j-1-N}}
(-\ie\omega)^k\alpha^{k,\sigma,m}_{h,\gamma,\nu}\big(\bX^{j-1-N-k}_{\sigma,m}(\omega)
-\loes_{\omega,j-1-N-k}\epsmz\big)\qquad,\non
\end{align}
where
\begin{align}
\beta^{k,\sigma,m}_{e,\gamma,n}&:=\kappa_{k,\sigma}\skpoml{\epgnz}{\Esmkmzse}\qquad,\mylabel{defbetaD}\\
\alpha^{k,\sigma,m}_{e,\gamma,n}&:=\kappa_{k,\sigma}\skpoml{\epgnz}{\Hsmkmzse}\qquad,\mylabel{defalphaD}\\
\beta^{k,\sigma,m}_{h,\gamma,\nu}&:=\kappa_{k,\sigma}\skpoml{\hpgnuz}{\Esmkmzse}\qquad,\mylabel{defbetaR}\\
\alpha^{k,\sigma,m}_{h,\gamma,\nu}&:=\kappa_{k,\sigma}\skpoml{\hpgnuz}{\Hsmkmzse}\qquad.\mylabel{defalphaR}
\end{align}
Thus there exist polynomials $\bXs^{\ell}_{\gamma,n}(\omega)$ and $\bYs^{\ell}_{\gamma,\nu}(\omega)$
of degree $\ell$ in $\omega$\,, such that
$$\loes_{\omega,j-1}\epgnz\psim{j}\bXs^{j-1}_{\gamma,n}(\omega)\qqtext{,}\loes_{\omega,j-1}\hpgnuz\psim{j}\bYs^{j-1}_{\gamma,\nu}(\omega)$$
hold. Since
$$\loes_{\omega,j}\epgnz\psim{j}\loes_{\omega,j-1}\epgnz
\qqtext{,}\loes_{\omega,j}\hpgnuz\psim{j}\loes_{\omega,j-1}\hpgnuz$$
the coefficients of $\bXs^{\ell}_{\gamma,n}(\omega)$ and $\bYs^{\ell}_{\gamma,\nu}(\omega)$ do not depend on $\ell$\,.
Consequently there exist forms
$$X^\ell_{\gamma,n}\,,\,Y^\ell_{\gamma,\nu}\in\Lzqqpelocom\qquad,$$
such that
\begin{align}
\cA^{j-1}_{\omega,\gamma,n}&\,\psim{j}\,\bX^{j-1}_{\gamma,n}(\omega)-\bXs^{j-1}_{\gamma,n}(\omega)=:\sum_{\ell=0}^{j-1}(-\ie\omega)^\ell X^\ell_{\gamma,n}\qquad,\mylabel{KoeffXdef}\\
\cB^{j-1}_{\omega,\gamma,\nu}&\,\psim{j}\,\bY^{j-1}_{\gamma,\nu}(\omega)-\bYs^{j-1}_{\gamma,\nu}(\omega)=:\sum_{\ell=0}^{j-1}(-\ie\omega)^\ell Y^\ell_{\gamma,\nu}\qquad.\mylabel{KoeffYdef}
\end{align}
We obtain immediately
\begin{align*}
\loes_{\omega,j-1}\emsmz&\psim{j}\kappa_\sigma\,\omega^{N+2\sigma}\sum_{\ell=0}^{{j-N-2\sigma-1}}(-\ie\omega)^\ell X^\ell_{\sigma,m}\qquad,\\
\loes_{\omega,j-1}\hmsmz&\psim{j}\kappa_\sigma\,\omega^{N+2\sigma}\sum_{\ell=0}^{{j-N-2\sigma-1}}(-\ie\omega)^\ell Y^\ell_{\sigma,m}
\intertext{and}
\loes_{\omega,j-1}\epgnz&\psim{j}\bXs^{j-1}_{\gamma,n}(\omega)\qqtext{,}
\loes_{\omega,j-1}\hpgnuz\psim{j}\bYs^{j-1}_{\gamma,n}(\omega)\qquad.
\end{align*}
\eqref{asymCDXYloesom} and \eqref{asymCRXYloesom} yield for $1\leq j\leq N$
\beq\loes_{\omega,j-1}\epgnz\,,\,\loes_{\omega,j-1}\hpgnuz\,\psim{j}\,(0,0)\mylabel{loesomjottmeDR}\eeq
and therefore we get for $\ell=0,\dots,N-1$
\beq X^\ell_{\gamma,n}=\Lambda^\me\Hgnl\qqtext{,}Y^\ell_{\gamma,\nu}=\Lambda^\me\Egnul\qquad.\mylabel{XgleichHYgleichE}\eeq
The higher order coefficients $X^\ell_{\gamma,n},Y^\ell_{\gamma,\nu}$
may be computed recursively utilizing \eqref{asymCDXYloesom}, \eqref{asymCRXYloesom}.
In particular we have for $j\geq N$
\begin{align}
\begin{split}
\bXs^{j}_{\gamma,n}(\omega)&=(-\ie\omega)^N\sum_{(k,\sigma,m)\in\tilde{\Theta}^q_{j-N}}(-\ie\omega)^k\beta^{k,\sigma,m}_{e,\gamma,n}\sum_{\ell=0}^{j-N-k}(-\ie\omega)^\ell Y^\ell_{\sigma,m}\\
&\qquad\qquad+(-\ie\omega)^N\sum_{(k,\sigma,m)\in\tilde{\Theta}^{q+1}_{j-N}}(-\ie\omega)^k\alpha^{k,\sigma,m}_{e,\gamma,n}\sum_{\ell=0}^{j-N-k}(-\ie\omega)^\ell X^\ell_{\sigma,m}\qquad,
\end{split}\mylabel{XsgleichYYsXXs}\\
\begin{split}
\bYs^{j}_{\gamma,\nu}(\omega)&=(-\ie\omega)^N\sum_{(k,\sigma,m)\in\tilde{\Theta}^q_{j-N}}(-\ie\omega)^k\beta^{k,\sigma,m}_{h,\gamma,\nu}\sum_{\ell=0}^{j-N-k}(-\ie\omega)^\ell Y^\ell_{\sigma,m}\\
&\qquad\qquad+(-\ie\omega)^N\sum_{(k,\sigma,m)\in\tilde{\Theta}^{q+1}_{j-N}}(-\ie\omega)^k\alpha^{k,\sigma,m}_{h,\gamma,\nu}\sum_{\ell=0}^{j-N-k}(-\ie\omega)^\ell X^\ell_{\sigma,m}\qquad.
\end{split}\mylabel{YsgleichYYsYYs}
\end{align}

Using \eqref{bXbYDefi} and \eqref{KoeffXdef}, \eqref{KoeffYdef}
we get the following recursion for the forms $X^\ell_{\gamma,n}$\,, $Y^\ell_{\gamma,\nu}$ and $\ell\geq N$
\begin{align}
\begin{split}
X^\ell_{\gamma,n}&=\Lambda^\me\Hgnl-\sum_{(k,\sigma,m)\in\tilde{\Theta}^q_{\ell-N}}\beta^{k,\sigma,m}_{e,\gamma,n} Y^{\ell-N-k}_{\sigma,m}\\
&\qquad\qquad\qquad\qquad-\sum_{(k,\sigma,m)\in\tilde{\Theta}^{q+1}_{\ell-N}}\alpha^{k,\sigma,m}_{e,\gamma,n} X^{\ell-N-k}_{\sigma,m}\qquad,
\end{split}\mylabel{RekX}\\
\begin{split}
Y^\ell_{\gamma,\nu}&=\Lambda^\me\Egnul-\sum_{(k,\sigma,m)\in\tilde{\Theta}^q_{\ell-N}}\beta^{k,\sigma,m}_{h,\gamma,\nu} Y^{\ell-N-k}_{\sigma,m}\\
&\qquad\qquad\qquad\qquad-\sum_{(k,\sigma,m)\in\tilde{\Theta}^{q+1}_{\ell-N}}\alpha^{k,\sigma,m}_{h,\gamma,\nu} X^{\ell-N-k}_{\sigma,m}\qquad.
\end{split}\mylabel{RekY}
\end{align}
Additionally we obtain
\begin{align}
X^\ell_{\gamma,n}\,,\,Y^\ell_{\gamma,\nu}&\in\Lin\{\Lambda^\me\Hgnl,\Lambda^\me\Egnul\}\non\\
&\qquad+\Lin\setb{X^{\ell-N-k}_{\sigma,m}\,,\,Y^{\ell-N-\tilde{k}}_{\tilde{\sigma},\tilde{m}}}{(k,\sigma,m)\in\tilde{\Theta}^{q+1}_{\ell-N}\,\wedge\,(\tilde{k},\tilde{\sigma},\tilde{m})\in\tilde{\Theta}^q_{\ell-N}}\non\\
&\subset\Lin\{\Lambda^\me\Hgnl,\Lambda^\me\Egnul\}
+\Lin\set{X^k_{\sigma,m}\,,\,Y^k_{\sigma,\tilde{m}}}{k+2\sigma\leq\ell-N}\non
\intertext{and a short induction shows}
\begin{split}
X^\ell_{\gamma,n}\,,\,Y^\ell_{\gamma,\nu}&\in\Lin\{\Lambda^\me\Hgnl,\Lambda^\me\Egnul\}\\
&\qquad+\Lin\set{\Lambda^\me\Esmk,\Lambda^\me\Hsmsk}{k+2\sigma\leq\ell-N}\qquad.
\end{split}\mylabel{MengeXY}
\end{align}
Moreover, our coefficient forms satisfy
\beq M X^0_{\gamma,n}=M Y^0_{\gamma,\nu}=(0,0)\mylabel{MXYgleichNull}\eeq
and for $\ell\leq N-1$
\beq\Lambda^\me M X^\ell_{\gamma,n}=X^{\ell-1}_{\gamma,n}\qqtext{,}\Lambda^\me M Y^\ell_{\gamma,\nu}=Y^{\ell-1}_{\gamma,\nu}\qquad.\mylabel{MXYgleichXY}\eeq
Once again by induction these equations hold true for all $\ell\geq N$\,.

We may formulate the main result of step one. For this let the coefficients
$X^\ell_{\gamma,n}$ and $Y^\ell_{\gamma,\nu}$ be defined recursively by \eqref{XgleichHYgleichE},
\eqref{RekX}, \eqref{RekY} and

\begin{defini}\mylabel{asymsatzlokaldef}
Let $\fJ\in\nzn$\,, $s\in(\fJ+1/2,\infty)\ohne\pI$ and $\FG\in\Lzqqpesom$\,.
Then for $j=0,\dots,\fJ-1-N$ we define the `correction operators'
\begin{align*}
\hat{\Gamma}_j\FG&:=\sum_{(k,\sigma,m)\in\tilde{\Theta}^q_j}\kappa_{k,\sigma}\skpboml{\FG}{\Esmkmzse}Y^{j-k}_{\sigma,m}\\
&\qquad\qquad+\sum_{(k,\sigma,m)\in\tilde{\Theta}^{q+1}_j}\kappa_{k,\sigma}\skpboml{\FG}{\Hsmkmzse}X^{j-k}_{\sigma,m}\qquad.
\end{align*}
\end{defini}

\begin{theo}\mylabel{asymsatzlokal}
Let $\fJ\in\nzn$ and $s\in(\fJ+1/2,\infty)\ohne\pI$\,.
Then for all bounded subdomains $\omb\subset\Omega$ the asymptotic
$$\normb{\loes_{\omega,\fJ-1}\FG
-(-\ie\omega)^N\sum_{j=0}^{\fJ-1-N}(-\ie\omega)^j\hat{\Gamma}_j\FG}_{\Lzqqpe{}(\omb)}$$
$$=\calO\big(|\omega|^\fJ\big)\normb{\FG}_{\Lzqqpesom}$$
holds uniformly with respect to $\FG\in\regqnsom$ and $\omega\in\czpomd\ohne\{0\}$\,.
\end{theo}

\begin{rem}\mylabel{asymsatzlokalbemGamma}
\begin{itemize}
\item[\bf(i)] The coefficients $X^\ell_{\gamma,n}$\,, $Y^\ell_{\gamma,\nu}$ have to be computed only for
$\ell,2\gamma\leq\fJ-1-N$ and $n=1,\dots,\mu^{q+1}_\gamma$\,, $\nu=1,\dots,\mu^q_\gamma$\,.
\item[\bf(ii)] Because of \eqref{MXYgleichNull}, \eqref{MXYgleichXY} the correction operators satisfy
$$\Lambda^\me M\,\hat{\Gamma}_j=\hat{\Gamma}_{j-1}\qquad,$$
where $\hat{\Gamma}_{-1}:=0$\,.
\item[\bf(iii)] By \eqref{MengeXY} we have
$$\hat{\Gamma}_j\FG\in\corom{q}{j}:=\Lin\set{\Lambda^\me\Esmk\,,\,\Lambda^\me\Hsnk}{k+2\sigma\leq j}\qquad.$$
Hence the correction operators
$$\hat{\Gamma}_j\,:\,\Lzqqpesom\,\To\,\corom{q}{j}$$
are degenerated (and clearly continuous).
\item[\bf(iv)] $\hat{\Gamma}_j\FG=(0,0)$ for $\FG\in\regqJsom$\,.
\end{itemize}
\end{rem}

\begin{rem}\mylabel{asymsatzlokalbemXYRek}
Utilizing the representations of $\Esmk\,,\,\Hsmk$ from Corollary \ref{loesEsmHgnkor}
and the orthogonality properties from Lemma \ref{Tuermeorthoreg} we may obtain a more detailed
recursive definition of the coefficient forms $X^\ell_{\gamma,n}\,,\,Y^\ell_{\gamma,\nu}$\,.
Namely looking at \eqref{defbetaD} and keeping in mind $k-2\sigma\geq0$ we see that
$\beta^{k,\sigma,m}_{e,\gamma,n}$ vanishes for odd $k-2\sigma+1$\,, i.e. even $k$\,.
However, for even $k-2\sigma+1$\,, i.e. odd $k$\,,  we get by Corollary \ref{loesEsmHgnkor} and
Lemma \ref{Tuermeorthoreg}
$$\beta^{k,\sigma,m}_{e,\gamma,n}=-\kappa_{k,\sigma}\,\xi^{k-2\sigma+1,\sigma,m}_{(1,\gamma,n,-)}\qquad.$$
Accordingly we achieve for odd $k$
$$\alpha^{k,\sigma,m}_{h,\gamma,\nu}=-\kappa_{k,\sigma}\,\zeta^{k-2\sigma+1,\sigma,m}_{(1,\gamma,\nu,-)}
\qtext{,}\alpha^{k,\sigma,m}_{e,\gamma,n}=\beta^{k,\sigma,m}_{h,\gamma,\nu}=0$$
and for even $k$
$$\alpha^{k,\sigma,m}_{h,\gamma,\nu}=0
\qtext{,}
\alpha^{k,\sigma,m}_{e,\gamma,n}=-\kappa_{k,\sigma}\,\xi^{k-2\sigma+1,\sigma,m}_{(1,\gamma,n,-)}
\qtext{,}
\beta^{k,\sigma,m}_{h,\gamma,\nu}=-\kappa_{k,\sigma}\,\zeta^{k-2\sigma+1,\sigma,m}_{(1,\gamma,\nu,-)}
\quad.$$
Now our recursion \eqref{RekX}, \eqref{RekY} appears in a more explicit shape\mylabel{XYRekexpl}
\begin{align*}
X^\ell_{\gamma,n}&:=\Lambda^\me\Hgnl
+\sum_{\substack{(k,\sigma,m)\in\tilde{\Theta}^q_{\ell-N}\\k\text{\rm\, odd}}}\kappa_{k,\sigma}\, \xi^{k-2\sigma+1,\sigma,m}_{(1,\gamma,n,-)}\, Y^{\ell-N-k}_{\sigma,m}\\
&\qquad\qquad\qquad\qquad\qquad+\sum_{\substack{(k,\sigma,m)\in\tilde{\Theta}^{q+1}_{\ell-N}\\k\text{\rm\, even}}}\kappa_{k,\sigma}\, \xi^{k-2\sigma+1,\sigma,m}_{(1,\gamma,n,-)}\, X^{\ell-N-k}_{\sigma,m}\qquad,\\
Y^\ell_{\gamma,\nu}&:=\Lambda^\me\Egnul
+\sum_{\substack{(k,\sigma,m)\in\tilde{\Theta}^q_{\ell-N}\\k\text{\rm\, even}}}\kappa_{k,\sigma}\, \zeta^{k-2\sigma+1,\sigma,m}_{(1,\gamma,\nu,-)}\, Y^{\ell-N-k}_{\sigma,m}\\
&\qquad\qquad\qquad\qquad\qquad+\sum_{\substack{(k,\sigma,m)\in\tilde{\Theta}^{q+1}_{\ell-N}\\k\text{\rm\, odd}}}\kappa_{k,\sigma}\, \zeta^{k-2\sigma+1,\sigma,m}_{(1,\gamma,\nu,-)}\, X^{\ell-N-k}_{\sigma,m}\qquad.
\end{align*}
\end{rem}

\begin{proof}
W. l. o. g. let $s\in(\fJ+1/2,\fJ+N/2)\ohne\pI$\,.
We insert the asymptotics \eqref{KoeffXdef}, \eqref{KoeffYdef} into the estimates of Lemma \ref{asymz}.
Introducing the new variable $j(\ell):=\ell+k$ and ordering the sums according to
$j$\,, $k$\,, $\sigma$\,, $m$ we have proved the theorem.
The assertions of the two remarks are easy consequences of the definition of the correction operators
and \eqref{MengeXY}, \eqref{MXYgleichNull}, \eqref{MXYgleichXY}.
\end{proof}

The first step is completed.

\subsection{Second step}

By the results obtained in \paulydecosecres\, we get the following essential decomposition:

\begin{lem}\mylabel{decolem}
Let $s>1-N/2$ and $s+1\notin\pI$ as well as $t\leq s$ and $t<N/2$\,.
Then every $\FG\in\Lzqqpesom$ may be uniquely decomposed into
\begin{align*}
\FG&=\Lambda\FrGd+\FdGr\qquad,
\intertext{where $\FrGd$ and $\FdGr$ are uniquely decomposed into}
\FrGd&=(\overset{\circ}{b},b)+\FrGdt+
\big(\sum_{I\in\cIb^{q,0}_{s}}\varphi_{I}(\vartheta^q_I)^\me\eta R^{q}_I,
\sum_{I\in\cIb^{q+1,0}_{s}}\psi_{I}\vartheta^{q+1}_I\eta D^{q+1}_I\big)\quad,\\
\FdGr&=\FdGrt+\big(\sum_{I\in\cIb^{q,0}_{s}}\varphi_{I}\pdiv\eta R^{q+1}_{{}_1I},
\sum_{I\in\cIb^{q+1,0}_{s}}\psi_{I}\rot\eta D^{q}_{{}_1I}\big)
\intertext{with constants $\varphi_{I},\psi_{I}\in\cz$\,,
where $\vartheta^q_I:=\ie\big(q'+\ei(I)\big)^{1/2}\big(q+\ei(I)\big)^{-1/2}$\,. Moreover}
(\overset{\circ}{b},b)&\in\Lin\bonq\times\Lin\bqpe\subset\ronqvoxnom\times\dqpevoxnom\qquad,\\
\FrGdt&\in\ronqsom\times\dqpesom\qquad,\\
\FrGd&\in\triqsom:=(\Lin\bonq\times\Lin\bqpe)
\dotplus\big(\bR{q}{s}{0}(\cIb^{q,0}_{s},\om)\times\bD{q+1}{s}{0}(\cIb^{q+1,0}_{s},\om)\big)\\
&\qquad\subset\;\ronqtnom\times\dqpetnom\qquad,\\
\FdGrt&\in\regqnsom\qquad,\\
\FdGr&\in\regqom{-1}{s}:=\bD{q}{s}{0}(\cIb^{q,0}_{s},\om)\times\bR{q+1}{s}{0}(\cIb^{q+1,0}_{s},\om)\\
&\qquad=\;\regqnsom\dotplus\big(\pdiv\eta\calR^{q+1}({}_1\cIb^{q,0}_{s})\times\rot\eta\calD^{q}({}_1\cIb^{q+1,0}_{s})\big)\\
&\qquad\subset\;\regqntom
\end{align*}
and all projections are continuous. We denote the projection $\FG\mapsto\FrGd$ by $\Pi$ and
the projection $\FG\mapsto\FdGr$ by $\Pi_{\reg}=\id-\Lambda\Pi$\,.\\
Shortly written we get
$$\Lzqqpesom=\big(\Lambda\triqsom\dotplus\regqom{-1}{s}\big)\cap\Lzqqpesom\qquad,$$
where $\triqsom=\Pi\,\Lzqqpesom$ and $\regqom{-1}{s}=\Pi_{\reg}\Lzqqpesom$\,.
\end{lem}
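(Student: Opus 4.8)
The plan is to derive the entire statement from the weighted Hodge--Helmholtz decomposition already established in \paulydecosecres. First I would invoke \paulydecotheomain\ together with \paulydecolemb, which under the present hypotheses $s>1-N/2$, $s+1\notin\pI$, $t\leq s$ and $t<N/2$ furnish directly the topological direct sum
$$\Lzqqpesom=\big(\Lambda\triqsom\dotplus\regqom{-1}{s}\big)\cap\Lzqqpesom$$
together with the continuous projections $\Pi$ onto the first and $\Pi_{\reg}=\id-\Lambda\Pi$ onto the second summand. This already yields the unique primary splitting $\FG=\Lambda\FrGd+\FdGr$ with $\FrGd=\Pi\FG\in\triqsom$ and $\FdGr=\Pi_{\reg}\FG\in\regqom{-1}{s}$, as well as the asserted continuity of all projections; the uniqueness is exactly the directness of the sum $\Lambda\triqsom\dotplus\regqom{-1}{s}$.

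Second, I would unpack the explicit shape of the two summands by reading off the internal structure of $\triqsom$ and $\regqom{-1}{s}$ as displayed in the statement. The trivial part $\FrGd$ decomposes into the finite-dimensional Dirichlet-form contribution $(\overset{\circ}{b},b)\in\Lin\bonq\times\Lin\bqpe$, the genuinely $\Lzqqpesom$-integrable solenoidal resp. irrotational part $\FrGdt\in\ronqsom\times\dqpesom$, and the growing tower-form part indexed by $\cIb^{q,0}_{s}$ and $\cIb^{q+1,0}_{s}$, carrying the normalising coefficients $\vartheta^q_I$. The regular part $\FdGr$ splits analogously into $\FdGrt\in\regqnsom$ and its own tower contribution, obtained by applying $\pdiv$ resp. $\rot$ to the shifted tower forms $\eta R^{q+1}_{{}_1I}$, $\eta D^q_{{}_1I}$; the very same constants $\varphi_I$, $\psi_I$ appear in both the trivial and the regular tower pieces, which is precisely what the stated formulas encode. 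Uniqueness of these secondary splittings follows from the directness of the internal $\dotplus$-sums defining $\triqsom$ and $\regqom{-1}{s}$, i.e. from the linear independence of the Dirichlet forms and of the tower forms attached to distinct indices.

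Third, I would verify the claimed inclusions into the less weighted spaces. The integrability estimates for the tower forms from \paulystaticinttower\ show that every growing summand belongs to $\Lzqqpetom$ for $t<N/2$ but in general \emph{not} to $\Lzqqpesom$ once $s\geq N/2$; combined with the solenoidality resp. irrotationality built into the tower indices this places $\FrGd$ in $\ronqtnom\times\dqpetnom$ and $\FdGr$ in $\regqntom\subset\dqtnom\times\ronqpetnom$, as stated. The weight $t<N/2$ is exactly the threshold below which all the tower contributions remain square-integrable.

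The genuine content is not in this bookkeeping but in the isomorphism theory underlying \paulydecotheomain, which is where the exclusion $s+1\notin\pI$ enters: $\pI$ is precisely the discrete set of indicial weights at which the associated elliptic operators fail to be Fredholm, and it is this phenomenon that forces the tower-form corrections to appear and explains why $\triqsom$ and $\regqom{-1}{s}$ embed only into $\Lzqqpetom$ for $t<N/2$ rather than into $\Lzqqpesom$ itself. Since that analysis has been carried out in \cite{paulydeco}, the task here reduces to transferring the notation and confirming the weight accounting for each explicit summand, so I expect the step requiring the most care to be merely the identification of the tower pieces and their integrability ranges.
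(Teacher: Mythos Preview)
Your proposal is correct and takes essentially the same approach as the paper: the paper simply states that this lemma follows from the results in \paulydecosecres\ (i.e., \paulydecolemtheomain) and gives no further argument, so your proof is just a more detailed unpacking of that citation. The additional bookkeeping you supply --- the tower-form integrability via \paulystaticinttower\ and the directness of the internal sums --- is exactly the content of those cited results, so nothing new is needed.
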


\begin{rem}\mylabel{decorem}
This lemma still holds true for $\tau$-$\pc{1}$-admissible transformations $\Lambda$ 
if we assume $\tau>0$\,, $\tau>s+1-N/2$ and $\tau\geq-s-1$\,.
\end{rem}

Let us consider for $\fJ\in\nzn$ and $s\in(\fJ+1/2,\infty)\ohne\pI$
some $\FG\in\Lzqqpesom$\,. We decompose $\FG$ according to the latter lemma.
$\FrGd$ solves
$$(M+\ie\omega\Lambda)\FrGd=\ie\omega\Lambda\FrGd$$
and satisfies the boundary, integrability and radiation condition since $t>-1/2$\,. Thus for the `trivial projection' we have
\beq\ie\omega\loesom\Lambda\FrGd=\FrGd\qqtext{, i.e.}\ie\omega\loesom\Lambda\Pi=\Pi\qquad.\mylabel{loesompi}\eeq
Looking at the `regular projection' we see that Theorem \ref{asymsatzlokal} determines the
asymptotic of $\FdGrt$ completely. Therefore it remains to compute the asymptotics of
$$(f_d,g_r):=\big(\sum_{I\in\cIb^{q,0}_{s}}\varphi_{I}\pdiv\eta R^{q+1}_{{}_1I},\sum_{J\in\cJb^{q+1,0}_{s}}\psi_{J}\rot\eta D^{q}_{{}_1J}\big)\qquad.$$
We note for $I=(-,0,\sigma,m)$ and $J=(-,0,\sigma,n)$
$$\eta R^{q+1}_{{}_1I}=\hmsm\qqtext{,}\eta D^{q}_{{}_1J}=\emsn\qquad.$$
Thus we have to calculate the asymptotics of
$$(\pdiv\eta R^{q+1}_{{}_1I},0)=M\hmsmn=\hmsme\qqtext{,}(0,\rot\eta D^{q}_{{}_1J})=M\emsnn=\emsne\qquad.$$
But this is quite easy since $\emsne$ and $\hmsme$ inherit the asymptotics of $\emsnz$\,, $\hmsmz$
derived in the first step. We discuss for example $\emsne$\,. $\emsne$ satisfies the boundary,
integrability and radiation condition as well as solves
$$(M+\ie\omega\Lambda)\emsne=\emsnz+\ie\omega\emsne\qquad.$$
This yields
$$\emsne=\loesom\emsnz+\ie\omega\loesom\emsne\qquad.$$
By Lemma \ref{Projlemmae} we get
\begin{align}
\loesom\emsne&=\frac{1}{\ie\omega}(\emsne-\loesom\emsnz)=\frac{1}{-\ie\omega}\loes_{\omega,0}\emsnz\non\\
&=\frac{1}{-\ie\omega}\big(\loes_{\omega,j}\emsnz+\sum_{\ell=1}^{j}(-\ie\omega)^\ell\loesn\loes^\ell\emsnz\big)\non\\
&=\frac{1}{-\ie\omega}\big(\loes_{\omega,j}\emsnz+\sum_{\ell=0}^{j-1}(-\ie\omega)^{\ell+1}\loesn\loes^\ell\emsne\big)\non
\intertext{and hence}
\loes_{\omega,j-1}\emsne&=\frac{1}{-\ie\omega}\loes_{\omega,j}\emsnz\qquad.\non
\intertext{By \eqref{loesomjmeMquadesmd} and \eqref{KoeffXdef} we obtain}
\begin{split}
\loes_{\omega,j-1}\emsne&\psim{j}\ie\kappa_\sigma\omega^{N+2\sigma-1}\sum_{\ell=0}^{j-N-2\sigma}(-\ie\omega)^\ell X^\ell_{\sigma,n}\\
&=\kappa_{0,\sigma}(-\ie\omega)^{N-1}\sum_{\ell=0}^{j-N-2\sigma}(-\ie\omega)^{\ell+2\sigma} X^\ell_{\sigma,n}
\end{split}\mylabel{asymee}
\intertext{and similarly}
\loes_{\omega,j-1}\hmsme&=\kappa_{0,\sigma}(-\ie\omega)^{N-1}\sum_{\ell=0}^{j-N-2\sigma}(-\ie\omega)^{\ell+2\sigma} Y^\ell_{\sigma,m}\qquad.\mylabel{asymhe}
\end{align}
In order to compute the coefficients $\varphi_I$\,, $\psi_J$ in terms of $\FG$ we have the following lemma.
Please compare to Lemma \ref{Regorthorechnung}.

\begin{lem}\mylabel{coefflem}
Let $s\in(1-N/2,\infty)\ohne\pI$ and $\FG\in\Lzqqpesom$\,. Then for all appropriate $\sigma,m$ and $\ell\geq1$
\begin{align*}
\text{\bf(i)}&&\skpboml{\FG}{\Esmn}&=\skp{F}{\Esm}_{\Lzqom{}}=-\varphi_I\qquad,\\
&&\skpboml{\FG}{\Hsmn}&=\skp{G}{\Hsm}_{\Lzqpeom{}}=-\psi_J\qquad,\\
\text{\bf(ii)}&&\skpboml{\FG}{\Esml}&=\skpboml{\FdGrt}{\Esml}\qquad,\\
&&\skpboml{\FG}{\Hsml}&=\skpboml{\FdGrt}{\Hsml}
\end{align*}
hold, where $I=(-,0,\sigma,m)$ resp. $J=(-,0,\sigma,m)$\,.
\end{lem}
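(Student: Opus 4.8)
The plan is to imitate the proof of Lemma~\ref{Regorthorechnung}: reduce each $\Lambda^\me$-pairing to an ordinary $L^2$-pairing, decompose the datum by Lemma~\ref{decolem}, and evaluate the surviving contributions through partial integration and the tower orthogonality of Lemma~\ref{Tuermeorthoreg}. The two leftmost equalities are immediate: since $\Esmn=\Lambda(\Esm,0)$ and $\Hsmn=\Lambda(0,\Hsm)$, the definition of $\skpoml{\,\cdot\,}{\,\cdot\,}$ together with the self-adjointness of $\Lambda$ gives
$$\skpboml{\FG}{\Esmn}=\skp{F}{\Esm}_{\Lzqom{}}\qqtext{,}\skpboml{\FG}{\Hsmn}=\skp{G}{\Hsm}_{\Lzqpeom{}}\qquad,$$
with no analysis involved, and this reduces the second equality in (i) to computing $\skp{F}{\Esm}_{\Lzqom{}}$.

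For that remaining identity I would write $\FG=\Lambda\FrGd+\FdGr$ as in Lemma~\ref{decolem} and split $\skp{F}{\Esm}_{\Lzqom{}}$ accordingly. The genuinely regular summand $\FdGrt\in\regqnsom$ drops out by Lemma~\ref{Regorthorechnung}(i). In the trivial part the Dirichlet component $\overset{\circ}{b}\in\Lin\bonq$ is killed by $\eps\Esm\bot\bonqom$, and the component in $\ronqsom$ is killed because $\Esm$ is a local Dirichlet form (Lemma~\ref{EsmHgnlemma}), so that $\eps\Esm$ is solenoidal; both are the orthogonality underlying the decomposition. What survives are the two summands carrying $\varphi_I$, namely $(\vartheta^q_I)^\me\eta R^q_I$ from $\Lambda\FrGd$ and $\pdiv\eta R^{q+1}_{{}_1I}$ from $\FdGr$, each paired with $\Esm$; the weight $\vartheta^q_I$ is precisely what turns their sum into a well-defined quantity. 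Exactly as in Lemma~\ref{Regorthorechnung} and \cite{complete}, partial integration (\paulytimeharmpartint) converts these into pairings of the compactly supported commutator $C=C_{\Delta,\eta}$ against $\Esm$ over $\supp\nabla\eta$, where the nonvanishing cases of Lemma~\ref{Tuermeorthoreg} and Remark~\ref{Tuermeorthoregrem} produce a Kronecker symbol; tracking signs yields $-\varphi_I$. The claim for $\Hsm$ and $-\psi_J$ follows by the dual computation.

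For (ii) with $\ell\ge1$ I would again use $\FG=\Lambda\FrGd+\FdGrt+(\FdGr-\FdGrt)$, where $\FdGr-\FdGrt=\sum_I\varphi_I\hmsme+\sum_J\psi_J\emsne$, and show that both the trivial part and this tower part are $\skpoml{\,\cdot\,}{\,\cdot\,}$-orthogonal to $\Esml$. The trivial part vanishes by partial integration, using $M\FrGd=(0,0)$ while $\Esml$ lies in the range of $M$ (as $M\loesn=\id$), the terms at infinity being absorbed into $C$. For the tower part I would move one factor $M$ off $\hmsme=M\hmsmn$ and $\emsne=M\emsnn$ and pass to the compactly supported forms $\emsnz,\hmsmz$ of \eqref{Mquadesn}, \eqref{Mquadhsm}; by the very argument proving Lemma~\ref{Projlemmaz} the resulting pairings vanish for $\ell\ge1$, since the heights no longer meet the admissible combinations $\{(0,2),(1,1),(2,0)\}$ of Lemma~\ref{Tuermeorthoreg}. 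Hence only $\FdGrt$ remains, which is the assertion.

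The main obstacle is the evaluation of the two tower pairings in (i): taken separately their radial integrals are only borderline (logarithmically) convergent, and only the $\vartheta^q_I$-weighted combination is finite, so the whole computation must be routed through the commutator $C$, keeping every integral on the compact set $\supp\nabla\eta$. The delicate point there is to extract the exact constant $-1$, and this is where the explicit value of $\vartheta^q_I$ and the moment normalisation \eqref{etadachortho} of $\eta$ are indispensable.
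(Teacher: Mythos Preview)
Your overall strategy matches the paper's --- decompose, kill the regular pieces, and evaluate the tower contributions through the commutator $C$ and Lemma~\ref{Tuermeorthoreg} --- but the splitting you propose does not give well-defined individual pairings. In (i) the two $\varphi_I$-terms you isolate, $(\vartheta^q_I)^\me\eps\,\eta R^q_I$ and $\pdiv\eta R^{q+1}_{{}_1I}$, each lie only in $\Lzqom{<\sigma+N/2}$, while $\Esm\in\Lzqom{<-N/2-\sigma}$; on the diagonal $\sigma_I=\sigma$ the pairing is genuinely divergent, not just ``borderline''. You see this, but the proposed cure (``route through $C$'') is exactly what the paper does differently: it never forms these two pieces separately. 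Instead it invokes the alternative representation \paulydecotheomaindeltaeps\ (your equation \eqref{Fdeco}), in which the entire $\varphi_I$-contribution appears as the single \emph{compactly supported} form $-\ie(q'+\sigma)^{-1/2}CP^q_I$, where $P^q_I=(q+\sigma)^{1/2}R^q_{{}_2I}+\ie(q'+\sigma)^{1/2}D^q_{{}_2I}$ is a harmonic potential form. One then reads off $-\varphi_I$ directly from Lemma~\ref{Tuermeorthoreg} and Remark~\ref{Tuermeorthoregrem} with no convergence issue whatsoever. Relatedly, your orthogonality of $\eps\tilde F_r$ to $\Esm$ needs $\tilde F_r$ in the \emph{range} $\bRqsom=\rot\pr{q-1}{s-1}{}{\circ}(\om)$, not merely in the kernel $\ronqsom$; the paper's decomposition provides precisely $\hat F_r\in\bRqsom$.

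The same device resolves (ii). Your proposed splitting $\FG-\FdGrt=\Lambda\FrGd+(\FdGr-\FdGrt)$ again produces summands that are not individually in $\Lzqqpesom$ (each contains a tower form $\eta R^q_I$ or $\eta D^q_I$ of borderline weight), so the pairings with the growing $\Lambda^\me\Esml$ are not separately defined and the partial integration ``using $M\FrGd=0$'' cannot be justified. The paper avoids this by regrouping the two tower pieces into the single form $Q^q_I=(\vartheta^q_\sigma)^\me\rot\eta D^{q-1}_{{}_1I}+\pdiv\eta R^{q+1}_{{}_1I}$, which is compactly supported thanks to the identity $\ie(q'+\sigma)^{1/2}Q^q_I=CP^q_I-\pdiv C_{\rot,\eta}P^q_I-\rot C_{\pdiv,\eta}P^q_I$. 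One then integrates by parts legitimately and reduces $\skpboml{(Q^q_I,0)}{\Esml}$ to $-\skpboml{(0,CR^{q+1}_{{}_1I})}{\Esmle}$, which vanishes for $\ell\geq1$ by the height restrictions in Lemma~\ref{Tuermeorthoreg}. The missing ingredient in your sketch is precisely this potential-form identity; once you have it, the rest of your outline goes through essentially as written.
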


\begin{rem}\mylabel{coefflemrem}
Here of course `appropriate' means that all $\Esmn$\,, $\Hsmn$\,, $\Esml$\,, $\Hsml$ are elements
of $\Lzqqpeom{-s}$\,.
More precisely we may pick $I=(-,0,\sigma,m)\in\cIb^{q,0}_s$ resp. $J=(-,0,\sigma,m)\in\cIb^{q+1,0}_s$
and $(\ell,\sigma,m)\in\Theta^{q,\ell}_s$ resp. $(\ell,\sigma,m)\in\Theta^{q+1,\ell}_s$\,.
\end{rem}

\begin{proof}
Let us first discuss {\bf(i)} and $\varphi_I$\,. During the proof we denote $I=(-,0,\sigma,m)$\,.
The representation of $F$ in Lemma \ref{decolem}
may be written as \big(see \paulydecotheomaindeltaeps\big)
\beq F=\eps\overset{\circ}{b}+\eps\hat{F}_r+\hat{F}_d
-\ie\sum_{I\in\cIb^{q,0}_{s}}\varphi_{I}(q'+\sigma)^{-1/2}\eps\Delta_\eps\eta P^q_I\qquad,\mylabel{Fdeco}\eeq
where $P^q_I:=(q+\sigma)^{1/2}R^q_{{}_2I}+\ie(q'+\sigma)^{1/2}D^q_{{}_2I}$
is a potential form and by \paulydecosecres
$$\hat{F}_r\in\bRqsom=\rot\pr{q-1}{s-1}{}{\circ}(\om)\qqtext{,}\hat{F}_d\in\bDqsom=\pdiv\pdi{q+1}{s-1}{}{}(\om)\qquad.$$
Since $\eps\Delta_\eps=\eps\rot\pdiv+\pdiv\rot=\Delta$ on $\supp\eta$ and $\Delta P^q_I=0$ we obtain
$$\eps\Delta_\eps\eta P^q_I=CP^q_I\qquad.$$
Clearly we have
$$\skp{F}{\Egn}_{\Lzqom{}}=-\ie\sum_{I\in\cIb^{q,0}_{s}}\varphi_{I}(q'+\sigma)^{-1/2}\skp{CP^q_I}{\Egn}_{\Lzqom{}}$$
and utilizing the expansion of $\Esm$ from Corollary \ref{loesEsmHgnkor} as well as
Lemma \ref{Tuermeorthoreg} and Remark \ref{Tuermeorthoregrem} we get
\begin{align*}
\skp{F}{\Egn}_{\Lzqom{}}&=
\sum_{I\in\cIb^{q,0}_{s}}\varphi_{I}\big((\vartheta^q_\sigma)^\me\skp{CR^q_{{}_2I}}{\Egn}_{\Lzqom{}}
+\skp{CD^q_{{}_2I}}{\Egn}_{\Lzqom{}}\big)\\
&=-\varphi_{(-,0,\gamma,n)}\qquad,
\end{align*}
since the sums vanish except for $R^q_{{}_2I}=\turmr{q}{2}{\gamma}{n}{-}$
resp. $D^q_{{}_2I}=\turmd{q}{2}{\gamma}{n}{-}$\,.
The other assertion of {\bf(i)} for $\psi_J$ may be shown in a similar way.

To prove {\bf(ii)} we write
$$F=\eps\overset{\circ}{b}+\eps\check{F}_r+\tilde{F}_d
+\sum_{I\in\cIb^{q,0}_{s}}\varphi_{I}\big((\vartheta^q_\sigma)^\me\rot\eta D^{q-1}_{{}_1I}+\pdiv\eta R^{q+1}_{{}_1I}\big)$$
with some $\check{F}_r\in\bRqsom$\,. For any $\ell$ we have
$$\skpboml{(\eps\overset{\circ}{b},0)}{\Esml}=\skpboml{(\eps\check{F}_r,0)}{\Esml}=0\qquad.$$
Thus it remains to show for all $I$
$$\skpboml{(Q^q_I,0)}{\Esml}=0\qquad,$$
where $Q^q_I:=(\vartheta^q_\sigma)^\me\rot\eta D^{q-1}_{{}_1I}+\pdiv\eta R^{q+1}_{{}_1I}$\,.
In order to prove this we compute
$$\ie(q'+\sigma)^{1/2}Q^q_I=CP^q_I-\pdiv C_{\rot,\eta}P^q_I-\rot C_{\pdiv,\eta}P^q_I$$
and obtain directly $\skpboml{(\rot C_{\pdiv,\eta}P^q_I,0)}{\Esml}=0$\,.
With the second term on the right hand side we proceed as follows. We write $\Esml=M\Lambda^\me\Esmle$
and since $C_{\rot,\eta}$ is compactly supported partial integration yields
\begin{align*}
&\qquad\skpboml{(\pdiv C_{\rot,\eta}P^q_I,0)}{\Esml}\\
&=-\skpboml{(0,\rot\pdiv C_{\rot,\eta}P^q_I)}{\Esmle}\\
&=-\skpboml{(0,\rot CP^q_I)}{\Esmle}\\
&\qquad\qquad+\ie(q'+\sigma)^{1/2}\skpboml{(0,\rot\pdiv\eta R^{q+1}_{{}_1I})}{\Esmle}\\
&=\skpboml{(CP^q_I,0)}{\Esml}\\
&\qquad\qquad+\ie(q'+\sigma)^{1/2}\skpboml{(0,CR^{q+1}_{{}_1I})}{\Esmle}\qquad,
\end{align*}
where the last equation follows once more by \paulystaticremodd. Consequently
$$\skpboml{(Q^q_I,0)}{\Esml}=-\skpboml{(0,CR^{q+1}_{{}_1I})}{\Esmle}\qquad.$$
Because $\ell\geq1$ the scalar products $\skpboml{(0,CR^{q+1}_{{}_1I})}{\Esmle}$ vanish once again
by Corollary \ref{loesEsmHgnkor}, Lemma \ref{Tuermeorthoreg} and Remark \ref{Tuermeorthoregrem}.
We note that since $\ell\geq1$ the scalar products $\skpboml{(CP^q_I,0)}{\Esml}$
vanish as well, though this is not necessary for the proof.
The other assertions of {\bf(ii)} are shown analogously.
\end{proof}

%
%
%

Putting all together we obtain the main result of step two.

\begin{defini}\mylabel{asymsatzlokalzweidef}
Let $\fJ\in\nzn$\,, $s\in(\fJ+1/2,\infty)\ohne\pI$ and $\FG\in\Lzqqpesom$\,.
For $j=0,\dots,\fJ-N$ we define the `correction operators'
\begin{align*}
\tilde{\Gamma}_j\FG&:=-\sum_{2\sigma\leq j\,,\,m}\kappa_{0,\sigma}\skpboml{\FG}{\Esmn}Y^{j-2\sigma}_{\sigma,m}\\
&\qquad\qquad-\sum_{2\sigma\leq j\,,\,m}\kappa_{0,\sigma}\skpboml{\FG}{\Hsmn}X^{j-2\sigma}_{\sigma,m}
\end{align*}
as well as $\Gamma_0:=\tilde{\Gamma}_0$ and $\Gamma_j:=\hat{\Gamma}_{j-1}+\tilde{\Gamma}_j$
for $j=1,\dots,\fJ-N$\,.
\end{defini}

\begin{theo}\mylabel{asymsatzlokalzwei}
Let $\fJ\in\nzn$ and $s\in(\fJ+1/2,\infty)\ohne\pI$\,.
Then for all bounded subdomains $\omb\subset\Omega$ the asymptotic
$$\big|\normabst\big|\loesom\FG+(-\ie\omega)^\me\Pi\FG-\sum_{j=0}^{\fJ-1}(-\ie\omega)^j\loesn\loes^j\Pi_{\reg}\FG$$
$$-(-\ie\omega)^{N-1}\sum_{j=0}^{\fJ-N}(-\ie\omega)^{j}\Gamma_j\FG\big|\normabst\big|_{\Lzqqpe{}(\omb)}
=\calO\big(|\omega|^\fJ\big)\normb{\FG}_{\Lzqqpesom}$$
holds uniformly with respect to $\FG\in\Lzqqpesom$ and $\omega\in\czpomd\ohne\{0\}$\,.
\end{theo}

\begin{rem}\mylabel{asymsatzlokalzweirem}
\begin{itemize}
\item[\bf(i)] By Lemma \ref{Regorthorechnung} on $\regqnsom$ we have $\tilde{\Gamma}_j=0$
and thus $\Gamma_j=\hat{\Gamma}_{j-1}$ for $j\geq1$ as well as $\Gamma_0=0$\,.
\item[\bf(ii)] $\Pi\FG=0$ and $\Pi_{\reg}\FG=\FdGrt=\FG$ hold for $\FG\in\regqnsom$\,.
\item[\bf(iii)] Because of \eqref{MXYgleichNull}, \eqref{MXYgleichXY} the correction operators satisfy
$\Lambda^\me M\,\tilde{\Gamma}_j=\tilde{\Gamma}_{j-1}$\,, where $\tilde{\Gamma}_{-1}:=0$\,.
Thus we have $\Lambda^\me M\,\Gamma_j=\Gamma_{j-1}$\,.
\item[\bf(iv)] From Definition \ref{asymsatzlokalzweidef} we get
$$\tilde{\Gamma}_j\FG\in\Lin\set{X^{j-2\sigma}_{\sigma,m}\,,\,Y^{j-2\sigma}_{\sigma,m}}{2\sigma\leq j}$$
and thus the correction operators
$$\tilde{\Gamma}_j\,,\,\Gamma_j\,:\,\Lzqqpesom\,\To\,\corom{q}{j}$$
are degenerated (and clearly continuous).
\end{itemize}
\end{rem}

\begin{proof}
From the arguments above and the continuity of the projections from Lemma \ref{decolem}
$$\loesom\FG+(-\ie\omega)^\me\FrGd$$
$$-\sum_{j=0}^{\fJ-1}(-\ie\omega)^j\loesn\loes^j\FdGrt-\sum_{j=0}^{\fJ-1}(-\ie\omega)^j\loesn\loes^j(f_d,g_r)$$
$$-(-\ie\omega)^N\sum_{j=0}^{\fJ-1-N}(-\ie\omega)^{j}\hat{\Gamma}_j\FdGrt$$
$$+(-\ie\omega)^{N-1}\sum_{\sigma\leq s-N/2\,,\,m}\kappa_{0,\sigma}\skpboml{\FG}{\Esmn}\sum_{\ell=0}^{\fJ-N-2\sigma}(-\ie\omega)^{\ell+2\sigma} Y^\ell_{\sigma,n}$$
$$+(-\ie\omega)^{N-1}\sum_{\sigma\leq s-N/2\,,\,m}\kappa_{0,\sigma}\skpboml{\FG}{\Hsmn}\sum_{\ell=0}^{\fJ-N-2\sigma}(-\ie\omega)^{\ell+2\sigma} X^\ell_{\sigma,n}$$
may be estimated by $c|\omega|^\fJ\normb{\FG}_{\Lzqqpesom}$
in the $\Lzqqpe{}(\omb)$-norm uniformly in $\omega$ and $\FG$\,.
We note $I=(-,0,\sigma,m)\in\cIb^{q,0}_{s}$\,, if and only if $J=(-,0,\sigma,m)\in\cJb^{q+1,0}_{s}$\,,
if and only if  $\sigma\leq s-N/2$ and $m=1,\dots$\,.
By Lemma \ref{coefflem} and Definition \ref{asymsatzlokaldef} we have
$\hat{\Gamma}_j\FdGrt=\hat{\Gamma}_j\FG$\,. Rearranging the latter two terms we obtain
$$\Big|\normabst\Big|\loesom\FG+(-\ie\omega)^\me\FrGd-\sum_{j=0}^{\fJ-1}(-\ie\omega)^j\loesn\loes^j\FdGr$$
$$-(-\ie\omega)^{N-1}\big(\sum_{j=1}^{\fJ-N}(-\ie\omega)^{j}\hat{\Gamma}_{j-1}\FG
+\sum_{j=0}^{\fJ-N}(-\ie\omega)^{j}\tilde{\Gamma}_j\FG\big)\Big|\normabst\Big|_{\Lzqqpe{}(\omb)}$$
$$=\calO\big(|\omega|^\fJ\big)\normb{\FG}_{\Lzqqpesom}\qquad,$$
which completes the proof.
\end{proof}

\subsection{Third step}

Now we approach estimates in weighted norms.
For this we compare our solutions with the solutions of the homogeneous whole space case.
Let us denote $L_\omega:=\loesom$ in the special case $\om=\rN$ and $\Lambda=\id$\,. $L_\omega$
is well defined on $\Lzqqpe{>\frac{1}{2}}$ for all $\cz_+\ohne\{0\}$ by \paulytimeharmtheofred\,
since there are no eigensolutions in this case. We obtain

\begin{lem}\mylabel{ganzraumasymptotik}
Let $\fJ\in\nz$ and $s>\fJ-1/2$ as well as $t<\min\{s,N/2\}-\fJ-1$\,. Then for $j=0,\dots,\fJ-1$
there exist bounded linear operators
$$\Phi_j\in B(\Lzqqpes,\Lzqqpet)\qqtext{,}\Psi_j\in B(\qLz{q-1,q+2}{s},\Lzqqpet)$$
and a constant $c>0$\,, such that
$$\normB{L_\omega\FG-\sum_{j=0}^{\fJ-1}(-\ie\omega)^j\big(\Phi_j\FG+(-\ie\omega)^\me\Psi_j(\pdiv F,\rot G)\big)}_{\Lzqqpet}$$
$$\leq c|\omega|^{\fJ}\Big(\normb{\FG}_{\Lzqqpes}+\frac{1}{|\omega|}\normb{(\pdiv F,\rot G)}_{\qLz{q-1,q+2}{s}}\Big)$$
holds uniformly with respect to $\omega\in\cz_+\ohne\{0\}$ and $\FG\in\Dqs\times\Rqpes$\,.
The assertion holds also true for $\fJ=0$ and $s,-t>1/2$\,, $t\leq s-(N+1)/2$\,.
\end{lem}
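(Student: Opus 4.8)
The plan is to exploit that for $\om=\rN$ and $\Lambda=\id$ there is no boundary, so $L_\omega$ is given by the explicit whole-space fundamental representation \paulytimeharmfundrep. First I would derive a closed form for $L_\omega$. Writing $\mathcal{E}_\omega$ for componentwise convolution with the outgoing fundamental solution of the scalar Helmholtz operator $\Delta+\omega^2$ (so that $\mathcal{E}_\omega(\Delta+\omega^2)=\id$ on each component, with $\Delta=\rot\pdiv+\pdiv\rot$), the algebra behind Remark C suggests
$$L_\omega\FG=\mathcal{E}_\omega(M-\ie\omega)\FG+(\ie\omega)^{\me}\mathcal{E}_\omega(\rot\pdiv F,\pdiv\rot G)\qquad.$$
I would verify this directly: applying $M+\ie\omega$ and using $(M+\ie\omega)(M-\ie\omega)=M^2+\omega^2$ together with $M^2=\Delta-\mathrm{diag}(\rot\pdiv,\pdiv\rot)$ produces $\FG-\mathcal{E}_\omega(\rot\pdiv F,\pdiv\rot G)$ from the first summand, while the second contributes exactly $\mathcal{E}_\omega(\rot\pdiv F,\pdiv\rot G)$, since $M(\rot\pdiv F,\pdiv\rot G)=(\pdiv\pdiv\rot G,\rot\rot\pdiv F)=(0,0)$ by $\pdiv\pdiv=0$, $\rot\rot=0$. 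Uniqueness and the radiation condition single out this solution, so the formula holds. Note that $(\pdiv F,\rot G)\in\qLz{q-1,q+2}{s}$ is meaningful precisely because $\FG\in\Dqs\times\Rqpes$.

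Second, I would insert the power-series expansion of the Helmholtz kernel; this is the step where odd $N\geq3$ enters, the relevant Hankel functions being elementary so that the expansion carries no logarithmic terms. One writes $\mathcal{E}_\omega=\sum_{k\geq0}(-\ie\omega)^k\mathcal{E}_k+(\text{remainder})$, the $\mathcal{E}_k$ being convolutions with the quasi-homogeneous kernels $\sim|x|^{2k-N+2}$ of the regular part supplemented by the radiating contributions (powers $\omega^{N-2+\ell}$ against entire kernels). This is the same expansion that underlies the explicit radiating solutions $\fE^{n,\omega}_{\sigma,m},\fH^{n,\omega}_{\sigma,m}$ constructed above and is recorded in \cite[Sektion 5.5]{paulydiss}; compare \cite[(84)]{complete} and \cite[section 4]{linelaz}. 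A point worth stressing is that the first-order operator $M-\ie\omega$ in the first summand, and the outer $\rot$, $\pdiv$ in the second, should be moved onto the kernel by integration by parts; the differentiated kernels $\sim|x|^{2k-N+1}$ remain admissible convolution kernels, so the resulting coefficient operators act on $\FG$ resp. on $(\pdiv F,\rot G)$ without demanding any further derivatives of the data.

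Third, collecting powers of $\omega$ up to $\omega^{\fJ-1}$, I would define $\Phi_j$ as the $(-\ie\omega)^j$-coefficient of $\mathcal{E}_\omega(M-\ie\omega)$ (a combination of $\mathcal{E}_j$ composed with $M$ and of $\mathcal{E}_{j-1}$, with the derivative shifted to the kernel as above) and $\Psi_j$ as the corresponding coefficient of $(\ie\omega)^{\me}\mathcal{E}_\omega(\rot(\cdot),\pdiv(\cdot))$ acting on $(\pdiv F,\rot G)$; the lone factor $(\ie\omega)^{\me}$ is responsible both for the $(-\ie\omega)^{\me}$ in the statement and for the $1/|\omega|$ on the right-hand side. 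It then remains to prove the weighted $L^2$ mapping bounds: each $\mathcal{E}_k$ and its differentiated version is bounded $\Lzqqpes\to\Lzqqpet$, and the truncated remainder is $\calO\big(|\omega|^\fJ\big)$ in operator norm, uniformly in $\omega\in\czp\ohne\{0\}$. These are the weighted Riesz-potential estimates for the homogeneous kernels $|x|^{2k-N+2}$; the admissibility thresholds are precisely $s>\fJ-1/2$ and $t<\min\{s,N/2\}-\fJ-1$, the term $-\fJ-1$ encoding the loss of decay per order and the ceiling $N/2$ reflecting that $\text{L}^2$-forms cannot be localized below this weight. In contrast to the exterior-domain situation, every coefficient here is a genuine bounded operator and no degenerate correction operators occur, because in the full space the static inversions are carried out by the globally defined Newtonian-type potentials, without any boundary obstruction.

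The principal obstacle is the uniform weighted operator-norm control in the last step — both the boundedness of the individual $\mathcal{E}_k$ between the weighted spaces and the sharp $\calO\big(|\omega|^\fJ\big)$ order of the remainder — which rests on the detailed kernel analysis of the Helmholtz fundamental solution; I would lean on the estimates already developed in \cite{paulytimeharm} and in \cite{complete}, \cite{linelaz} rather than reprove them. Finally, for $\fJ=0$ the assertion collapses to uniform boundedness of $L_\omega$ as $\omega\to0$ under the weaker hypotheses $t\leq s-(N+1)/2$ and $-t>1/2$; this is the whole-space instance of the resolvent continuity already established in \paulytimeharmPzero and \paulytimeharmcorlomcont.
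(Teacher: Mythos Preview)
Your proposal is essentially the paper's own proof: the representation you derive for $L_\omega$ via componentwise convolution with the scalar Helmholtz fundamental solution is precisely \paulytimeharmfundrep, which the paper writes out componentwise as \eqref{EuI}, \eqref{HuJ}; your subsequent Taylor expansion of the kernel, identification of $\Phi_j,\Psi_j$ as convolutions against kernels $\sim|x-y|^{j+1-N}$, and the final weighted $L^2$ mapping bounds via \cite{mcowen} and \cite[Lemma 13]{linelaz} coincide with what the paper does. The only cosmetic difference is that you package the representation as $\mathcal{E}_\omega(M-\ie\omega)+(\ie\omega)^{-1}\mathcal{E}_\omega\,\mathrm{diag}(\rot\pdiv,\pdiv\rot)$ rather than writing the Euclidean components explicitly.
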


\begin{proof}
Using the fundamental solution for the scalar Helmholtz equation in $\rN$
$$\Phi_{\omega,\nu}(x)=\varphi_{\omega,\nu}\big(|x|\big)\qqtext{with}\varphi_{\omega,\nu}(t)=c_N\omega^\nu t^{-\nu}H_\nu^1(\omega t)\qquad,$$
where the constant $c_N$ only depends on the dimension $N$ and
$H_\nu^1(z)$ denotes Hankel's first function of index $\nu:=(N-2)/2$\,,
see \paulytimeharmsecasym, we may represent $\EH:=L_\omega\FG$
by \paulytimeharmfundrep, i.e.
\begin{align}
E_I&=G\star\rot\Phi^I_{\omega,\nu}+(-\ie\omega)F\star\Phi^I_{\omega,\nu}
-(-\ie\omega)^\me\pdiv F\star\pdiv\Phi^I_{\omega,\nu}\qquad,\mylabel{EuI}\\
H_J&=F\star\pdiv\Phi^J_{\omega,\nu}+(-\ie\omega)G\star\Phi^J_{\omega,\nu}
-(-\ie\omega)^\me\rot G\star\rot\Phi^J_{\omega,\nu}\qquad,\mylabel{HuJ}
\end{align}
if $E=E_I\pd x^I$ and $H=H_J\pd x^J$ as well as $\Phi^I_{\omega,\nu}:=\Phi_{\omega,\nu}\pd x^I$\,.
Here $\star$ is the convolution in $\rN$ for forms,
which simply is the sum of the scalar convolutions of their Euclidean components.
Utilizing Taylor's expansion theorem we get constants $c_j,c_j'\in\cz$
and functions $\text{\rm rem}_\fJ,\widetilde{\text{\rm rem}}_\fJ$\,,
such that for $t\in\rzp$ and $\omega\in\cz_+$ the expansions
\begin{align*}
\varphi_{\omega,\nu}(t)&=t^{2-N}\sum_{j=0}^{\fJ-2}c_j(\omega t)^j+\text{\rm rem}_\fJ(\omega t)t^{1-N+\fJ}\omega^{\fJ-1}\qquad,\\
\varphi_{\omega,\nu}'(t)&=t^{1-N}\sum_{j=0}^{\fJ-1}c_j'(\omega t)^j+\widetilde{\text{\rm rem}}_\fJ(\omega t)t^{1-N+\fJ}\omega^{\fJ}
\end{align*}
hold. The remainder functions $\text{\rm rem}_\fJ(z)$ and $\widetilde{\text{\rm rem}}_\fJ(z)$
are uniformly bounded with respect to $z\in\cz_+$ and the bounds only depend on $N$ and $\fJ$\,. Inserting these
Taylor representations into \eqref{EuI}, \eqref{HuJ} we obtain
\begin{align}\begin{split}
\EH&=\sum_{j=0}^{\fJ-1}(-\ie\omega)^j\big(\Phi_j\FG+(-\ie\omega)^\me\Psi_j(\pdiv F,\rot G)\big)\\
&\qquad\qquad+\omega^{\fJ}\big(\text{\rm Rem}_{\omega,\fJ}\FG+\frac{1}{\omega}\widetilde{\text{\rm Rem}}_{\omega,\fJ}(\pdiv F,\rot G)\big)\qquad,
\end{split}\mylabel{Taylordarstellung}\end{align}
where $\Phi_j$ and $\Psi_j$ resp. $\text{\rm Rem}_{\omega,\fJ}$ and $\widetilde{\text{\rm Rem}}_{\omega,\fJ}$
are convolution operators with integral kernels of shape
$b_j(x,y)|x-y|^{j+1-N}$ for $j=0,\dots,\fJ-1$ resp. $b_\text{\rm Rem}(x,y,\omega)|x-y|^{\fJ+1-N}$\,.
The kernel parts $b_j(x,y)$ are uniformly bounded with respect to $x,y\in\rN$ and independent of $\omega$\,.
Moreover, the kernel parts $b_\text{\rm Rem}(x,y,\omega)$
are uniformly bounded with respect to $x,y\in\rN$ and $\omega\in\cz_+$\,.
Thus it remains to show that the kernels $|x-y|^{j+1-N}$\,, $j=0,\dots,\fJ$ generate bounded linear
operators from $\Lzs$ to $\Lzt$\,. All kernels belong to $\text{L}^1_{\loc}$ and grow with $j$ if
$|x-y|>1$\,. Therefore we only have to discuss the worst kernel $|x-y|^{\fJ+1-N}$\,.
The assertion follows now by \cite[Lemma 1]{mcowen} and \cite[Lemma 13]{linelaz} as well as some case studies.
For a more detailed proof we refer to \cite[Sektionen 5.1-5.3]{paulydiss}.
\end{proof}

According to \cite[Theorem 4]{sphharm} there exist continuous projections
\begin{align*}
\pi&:\Lzqqpes\to\rqsn\times\dqpesn&&;\qquad\FG\mapsto(F_\text{\rm R},G_\text{\rm D})&&,\\
\pi_{\reg}&:\Lzqqpes\to\dqsn\times\rqpesn&&;\qquad\FG\mapsto(F_\text{\rm D},G_\text{\rm R})&&,\\
\pi_{\calS}&:\Lzqqpes\to\calS^q_s\times\calS^{q+1}_s&&;\qquad\FG\mapsto(F_\calS,G_\calS)&&,
\end{align*}
such that each $\FG\in\Lzqqpes$ may be uniquely decomposed as
$$\FG=(F_\text{\rm R},G_\text{\rm D})+(F_\text{\rm D},G_\text{\rm R})+(F_\calS,G_\calS)\qquad.$$

\begin{cor}\mylabel{ganzraumasymptotikkor}
Let $\fJ\in\nz$ and $s>\fJ-1/2$ as well as $t<\min\{s,N/2-1\}-\fJ-1$\,. Then
$$\big|\normabst\big|L_\omega\FG
-\sum_{j=0}^{\fJ-1}(-\ie\omega)^j\Phi_j(F_\text{\rm D}+F_\calS,G_\text{\rm R}+G_\calS)
+(-\ie\omega)^\me(F_\text{\rm R},G_\text{\rm D})$$
$$-\sum_{j=-1}^{\fJ-1}(-\ie\omega)^j\Psi_{j+1}(\pdiv F_\calS,\rot G_\calS)\big|\normabst\big|_{\Lzqqpet}
=\calO\big(|\omega|^{\fJ}\big)\normb{\FG}_{\Lzqqpes}$$
holds uniformly with respect to $\omega\in\cz_+\ohne\{0\}$ and $\FG\in\Lzqqpes$\,, if $\omega$ ranges in a bounded set.
The estimate remains valid even for $\fJ=0$ and $s,-t>1/2$\,, $t\leq s-(N+1)/2$\,.
\end{cor}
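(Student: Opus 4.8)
The plan is to reduce the statement to Lemma~\ref{ganzraumasymptotik} by splitting $\FG$ along the three continuous projections $\pi$, $\pi_{\reg}$, $\pi_{\calS}$ and treating the summands separately, exploiting that the data $(\pdiv F,\rot G)$ which drive the singular $\Psi$-terms behave completely differently on the three pieces. By linearity $L_\omega\FG=L_\omega(F_\text{\rm R},G_\text{\rm D})+L_\omega(F_\text{\rm D},G_\text{\rm R})+L_\omega(F_\calS,G_\calS)$.

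First I would dispose of the `trivial' part $(F_\text{\rm R},G_\text{\rm D})=\pi\FG$. Since $\rot F_\text{\rm R}=0$ and $\pdiv G_\text{\rm D}=0$ we have $M(F_\text{\rm R},G_\text{\rm D})=(0,0)$, so $-(-\ie\omega)^\me(F_\text{\rm R},G_\text{\rm D})$ solves $(M+\ie\omega)\EH=(F_\text{\rm R},G_\text{\rm D})$ and meets the integrability and radiation condition exactly as in \eqref{loesompi}; by uniqueness of $L_\omega$ this is $L_\omega(F_\text{\rm R},G_\text{\rm D})$. Thus the trivial part contributes precisely the explicitly singular term $-(-\ie\omega)^\me(F_\text{\rm R},G_\text{\rm D})$ and produces no error at all.

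For the `regular' part $(F_\text{\rm D},G_\text{\rm R})=\pi_{\reg}\FG$ we have $\pdiv F_\text{\rm D}=0$ and $\rot G_\text{\rm R}=0$, hence $(\pdiv F_\text{\rm D},\rot G_\text{\rm R})=(0,0)$ and Lemma~\ref{ganzraumasymptotik} collapses to $L_\omega(F_\text{\rm D},G_\text{\rm R})=\sum_{j=0}^{\fJ-1}(-\ie\omega)^j\Phi_j(F_\text{\rm D},G_\text{\rm R})+\calO(|\omega|^\fJ)\normb{(F_\text{\rm D},G_\text{\rm R})}_{\Lzqqpes}$, with no $\Psi$-contribution whatsoever. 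On the `special' part $(F_\calS,G_\calS)=\pi_{\calS}\FG$ the data $(\pdiv F_\calS,\rot G_\calS)$ need not vanish, so here Lemma~\ref{ganzraumasymptotik} retains both families; reindexing the $\Psi$-sum by $j\mapsto j+1$ turns $\sum_j(-\ie\omega)^{j-1}\Psi_j$ into a sum running from $j=-1$, which is exactly the shifted $\Psi_{j+1}$-series with the singular $j=-1$ term displayed in the statement. Adding the three contributions, the $\Phi_j$-terms of the regular and special parts merge into $\Phi_j(F_\text{\rm D}+F_\calS,G_\text{\rm R}+G_\calS)$, the trivial part supplies $-(-\ie\omega)^\me(F_\text{\rm R},G_\text{\rm D})$, and only the special part feeds the $\Psi$-series; continuity of the three projections bounds each residual by $\normb{\FG}_{\Lzqqpes}$.

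The one genuine obstacle is the singular piece of the error on the special part, namely $c|\omega|^\fJ\cdot|\omega|^{-1}\normb{(\pdiv F_\calS,\rot G_\calS)}_{\qLz{q-1,q+2}{s}}$, which a naive order-$\fJ$ application of Lemma~\ref{ganzraumasymptotik} only controls to size $\calO(|\omega|^{\fJ-1})$. To cure this I would apply the lemma at order $\fJ+1$ on $(F_\calS,G_\calS)$: the $|\omega|^{-1}$-weighted error then gains one power of $\omega$ and becomes $\calO(|\omega|^\fJ)$, while the additional $\Phi_\fJ$-term generated at this higher order is itself $\calO(|\omega|^\fJ)$ and is absorbed into the remainder, and the extra $\Psi$-term is precisely the one needed to extend the reindexed sum up to $j=\fJ-1$. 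Here one uses continuity of $\pi_{\calS}$ together with the boundedness of $(F,G)\mapsto(\pdiv F,\rot G)$ from $\calS^q_s\times\calS^{q+1}_s$ into $\qLz{q-1,q+2}{s}$, so that $\normb{(\pdiv F_\calS,\rot G_\calS)}_{\qLz{q-1,q+2}{s}}\leq c\normb{\FG}_{\Lzqqpes}$. The only remaining care is the index and weight bookkeeping: one must check that the admissible range $t<\min\{s,N/2-1\}-\fJ-1$ assumed here simultaneously supports the order-$\fJ$ expansions on the trivial and regular parts and the order-$(\fJ+1)$ expansion on the special part, which is exactly where the sharpened upper bound $N/2-1$, rather than $N/2$, is forced. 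The limiting case $\fJ=0$ follows from the corresponding addendum in Lemma~\ref{ganzraumasymptotik}.
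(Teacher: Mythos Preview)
Your strategy is exactly the paper's: split along $\pi,\pi_{\reg},\pi_{\calS}$, handle the trivial part explicitly, apply Lemma~\ref{ganzraumasymptotik} at order $\fJ$ to the regular part, and at order $\fJ+1$ to the special part so that the $|\omega|^{-1}$-weighted error gains the missing power.

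There is, however, one point in your bookkeeping that does not close as stated. Applying Lemma~\ref{ganzraumasymptotik} at order $\fJ+1$ with the \emph{same} weight $s$ would require $s>\fJ+1/2$ and $t<\min\{s,N/2\}-\fJ-2$, neither of which follows from the corollary's hypotheses $s>\fJ-1/2$ and $t<\min\{s,N/2-1\}-\fJ-1$ (take, e.g., $s\leq N/2-1$: the corollary allows $t\in(s-\fJ-2,s-\fJ-1)$, which your application would not cover). The paper fixes this by invoking the lemma on $(F_\calS,G_\calS)$ with the shifted weight $\tilde s:=s+1$ (and $\tilde t:=t$): then the lemma's conditions become $\tilde s>\fJ+1/2$ and $t<\min\{s+1,N/2\}-\fJ-2=\min\{s,N/2-1\}-\fJ-1$, which match the corollary exactly and explain the sharpened bound $N/2-1$. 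The shift is legitimate because $\calS^q_s\times\calS^{q+1}_s$ is a \emph{finite-dimensional} subspace of smooth forms, so the $\pdi{q}{s+1}{}{}\times\pr{q+1}{s+1}{}{}$-norm on it is equivalent to the $\Lzqqpe{s}$-norm and hence controlled by $\normb{\FG}_{\Lzqqpes}$ via continuity of $\pi_\calS$. You already invoke finite-dimensionality to bound $(\pdiv F_\calS,\rot G_\calS)$; the same fact is what lets you raise the weight by one and makes your order-$(\fJ+1)$ application legal.
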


\begin{proof}
We easily see $L_\omega(F_\text{\rm R},G_\text{\rm D})=-(-\ie\omega)^\me(F_\text{\rm R},G_\text{\rm D})$\,.
\big(Compare to \eqref{loesompi}.\big)
Moreover, Lemma \ref{ganzraumasymptotik} may be applied to $(F_\text{\rm D},G_\text{\rm R})$ and we get
$$\normb{L_\omega(F_\text{\rm D},G_\text{\rm R})-\sum_{j=0}^{\fJ-1}(-\ie\omega)^j\Phi_j(F_\text{\rm D},G_\text{\rm R})}_{\Lzqqpet}
=\calO\big(|\omega|^{\fJ}\big)\normb{\FG}_{\Lzqqpes}\qquad.$$
Furthermore, Lemma \ref{ganzraumasymptotik} may also be applied to
$(F_\calS,G_\calS)$ but with $\fJ+1$ instead of $\fJ$ as well as $\tilde{s}:=s+1$ and $\tilde{t}:=t$\,.
We achieve
$$\normb{L_\omega(F_\calS,G_\calS)-\sum_{j=0}^{\fJ}(-\ie\omega)^j\Phi_j(F_\calS,G_\calS)
-\sum_{j=-1}^{\fJ-1}(-\ie\omega)^j\Psi_{j+1}(\pdiv F_\calS,\rot G_\calS)}_{\Lzqqpet}$$
\begin{align*}
&\leq c|\omega|^{\fJ+1}\Big(\normb{(F_\calS,G_\calS)}_{\Lzqqpe{\tilde{s}}}+\frac{1}{|\omega|}\normb{(\pdiv F_\calS,\rot G_\calS)}_{\qLz{q-1,q+2}{\tilde{s}}}\Big)\\
&\leq c|\omega|^{\fJ}\normb{(F_\calS,G_\calS)}_{\pdi{q}{\tilde{s}}{}{}\times\pr{q+1}{\tilde{s}}{}{}}\qquad.
\end{align*}
Now the $\Lzqqpet$-norm of the term $(-\ie\omega)^{\fJ}\Phi_j(F_\calS,G_\calS)$
may be swallowed by the right hand side,
which itself can be further estimated by $\calO\big(|\omega|^{\fJ}\big)\normb{\FG}_{\Lzqqpes}$
since $\calS^q_s$ are finite dimensional subspaces of $\cqun$ for all $q$ and $s$\,.
Putting all together yields the desired assertion.
\end{proof}

We are able to formulate the main result of this section:

\begin{theo}\mylabel{asymsatzweighted}
Let $\fJ\in\nz$ and $s\in(\fJ+1/2,\infty)\ohne\pI$ as well as $t<\min\{s,N/2-1\}-\fJ-1$\,.
Then the asymptotic
$$\big|\normabst\big|\loesom\FG+(-\ie\omega)^\me\Pi\FG-\sum_{j=0}^{\fJ-1}(-\ie\omega)^j\loesn\loes^j\Pi_{\reg}\FG$$
$$-(-\ie\omega)^{N-1}\sum_{j=0}^{\fJ-N}(-\ie\omega)^{j}\Gamma_j\FG\big|\normabst\big|_{\Lzqqpetom}=\calO\big(|\omega|^\fJ\big)\normb{\FG}_{\Lzqqpesom}$$
holds uniformly with respect to $\FG\in\Lzqqpesom$ and $\omega\in\czpomd\ohne\{0\}$\,.
This asymptotic holds for $\fJ=0$ as well, 
if we replace the assumptions on the weight $t$ by $t<-1/2$ and $t\leq s-(N+1)/2$\,.
\end{theo}

\begin{proof}
Let us define operators ${\calK}_j$ via
\begin{align}
{\calK}_{-1}&:=-\Pi\qquad,\non\\
{\calK}_j&:=\loesn\loes^j\Pi_{\reg}&&,&j&=0,\dots,N-2\qquad,\non\\
{\calK}_j&:=\loesn\loes^j\Pi_{\reg}+\Gamma_{j-N+1}&&,&j&=N-1,\dots,\fJ-1\non
\intertext{and for $J\leq\fJ-1$}
\loes_{\omega,J}^{\calK}&:=\loesom-\sum_{j=-1}^{J}(-\ie\omega)^j{\calK}_j\qquad.\mylabel{deflomjk}
\end{align}

Then we have to show uniformly with respect to $\omega$ and $\FG$
$$\normb{\loes_{\omega,\fJ-1}^{\calK}\FG}_{\Lzqqpetom}
=\calO\big(|\omega|^\fJ\big)\normb{\FG}_{\Lzqqpesom}\qquad.$$
From now on all estimates are to be understood uniformly with respect to $\omega$ and $\FG$\,.
We want to combine the asymptotics in local norms proved in the second step
with the whole space asymptotics in weighted norms from the latter corollary.
Since we have by Theorem \ref{asymsatzlokalzwei} for every bounded subdomain $\omb$ of $\Omega$
\beq\normb{\loes_{\omega,\fJ-1}^{\calK}\FG}_{\Lzqqpe{}(\omb)}
=\calO\big(|\omega|^\fJ\big)\normb{\FG}_{\Lzqqpesom}\qquad,\mylabel{loesomkabschbound}\eeq
we get immediately
$$\normb{(1-\eta)\loes_{\omega,\fJ-1}^{\calK}\FG}_{\Lzqqpetom}=\calO\big(|\omega|^\fJ\big)\normb{\FG}_{\Lzqqpesom}$$
and it remains to estimate $\normb{\eta\loes_{\omega,\fJ-1}^{\calK}\FG}_{\Lzqqpet}$\,.

To do so let $\omega\in\czpomd\ohne\{0\}$ and $\FG\in\Lzqqpesom$\,.
According to \paulydecotheomaindeltaeps\, \big(compare with Lemma \ref{decolem} and \eqref{Fdeco}\big) we decompose
$$\FG=\Lambda\big(\ub{(\overset{\circ}{b},b)+(\hat{F}_r,\hat{G}_d)}_{\bds=:\hat{\Pi}\FG\eds}\big)
+\ub{(\hat{F}_d,\hat{G}_r)}_{\bds=:\hat{\Pi}_{\reg}\FG\eds}
+\ub{\big(\sum_{I\in\cIb^{q,0}_{s}}\hat{\varphi}_{I}CP^q_I,
\sum_{J\in\cIb^{q+1,0}_{s}}\hat{\psi}_{J}CP^{q+1}_J\big)}_{\bds=:\hat{\Pi}_C\FG\eds}$$
with continuous projections, where $CP^q_I\in\cqun$
and $(\overset{\circ}{b},b)\in\Lin\bonq\times\Lin\bqpe$ as well as
$$(\hat{F}_r,\hat{G}_d)\in\bRom{q}{s}{0}\times\bDom{q+1}{s}{0}\qqtext{,}
(\hat{F}_d,\hat{G}_r)\in\regqnsom\qquad.$$
By Lemma \ref{coefflem} we have
$$|\hat{\varphi}_{I}|+|\hat{\psi}_{J}|\leq c\normb{\FG}_{\Lzqqpesom}\qquad.$$
The trick is to apply this decomposition using another cut-off function $\tilde{\eta}$\,,
which satisfies $\supp\nabla\tilde{\eta}\subset\ol{Z_{r_3,r_4}}$\,,
whereas $\supp\nabla\eta\subset\ol{Z_{r_1,r_2}}$\,. More precisely we set
$$\tilde{\eta}:=\check{\eta}\circ r\qqtext{,}\check{\eta}(t):=\mbox{\boldmath$\eta$}\big(1+\frac{t-r_3}{r_4-r_3}\big)$$
and note $C=C_{\Delta,\tilde{\eta}}$ in this case. Since
$$\loesom\Lambda\hat{\Pi}\FG=-(-\ie\omega)^\me\hat{\Pi}\FG$$
it suffices to discuss $\eta\loesom\FGd$ with $\FGd:=(\hat{\Pi}_{\reg}+\hat{\Pi}_C)\FG$\,.

$\eta\loesom\FGd\in\Rqkmeh\times\Dqpekmeh$ satisfies the radiation condition and solves
\beq(M+\ie\omega)\eta\loesom\FGd=(M+\ie\omega\Lambda)\eta\loesom\FGd=\fg\mylabel{MaxEHFGfg}\eeq
with $\fg:=(\eta+C_{M,\eta}\loesom)\FGd\in\Lzqqpes$\,. (Without further comments here
and in the following we often identify forms with their extensions by zero to $\rN$\,.)
Thus $\eta\loesom\FGd=L_\omega\fg$ or in another notation
\beq\eta\loesom=L_\omega(\eta\id+C_{M,\eta}\loesom)\mylabel{etaloesomL}\eeq
holds even on the whole space $\Lzqqpesom$\,. 
By Corollary \ref{ganzraumasymptotikkor} there exist bounded linear operators
$\Xi_{-1},\dots,\Xi_{\fJ-1}$ mapping $\Lzs$ to $\Lzt$\,, which satisfy
\beq\normb{L_{\omega,\fJ-1}\fg}_{\Lzqqpet}=\calO\big(|\omega|^{\fJ}\big)\normb{\fg}_{\Lzqqpes}\qquad,\mylabel{LomfJ}\eeq
where
\beq L_{\omega,\fJ-1}:=L_{\omega}-\sum_{j=-1}^{\fJ-1}(-\ie\omega)^j\Xi_j\qquad.\mylabel{definiLomfJ}\eeq
Moreover, $\normb{\fg}_{\Lzqqpes}$ can be further estimated by
$$\normb{\fg}_{\Lzqqpes}\leq c\Big(\normb{\FG}_{\Lzqqpesom}
+\normb{C_{M,\eta}\loesom\FGd}_{\Lzqqpe{}(\supp\nabla\eta)}\Big)\qquad.$$
Using \paulytimeharmPzeroiv\, we can estimate
$$\normb{\loesom\hat{\Pi}_{\reg}\FG}_{\Lzqqpe{}(\supp\nabla\eta)}
\leq c\normb{\hat{\Pi}_{\reg}\FG}_{\Lzqqpesom}\leq c\normb{\FG}_{\Lzqqpesom}\quad.$$
Looking at some term of $\hat{\Pi}_C\FG$ we see
$$C_{M,\eta}\loesom(CP^{q}_I,CP^{q+1}_J)
=C_{M,\eta}\loesom(\pdiv\rot\tilde{\eta}P^{q}_I,\rot\pdiv\tilde{\eta}P^{q+1}_J)$$
because
$$\loesom(\rot\pdiv\tilde{\eta}P^{q}_I,\pdiv\rot\tilde{\eta}P^{q+1}_J)
=\frac{1}{\ie\omega}\,(\rot\pdiv\tilde{\eta}P^{q}_I,\pdiv\rot\tilde{\eta}P^{q+1}_J)
\in\Lzqqpe{<\Nh}$$
and thus $C_{M,\eta}\loesom(\rot\pdiv\tilde{\eta}P^{q}_I,\pdiv\rot\tilde{\eta}P^{q+1}_J)=0$
since $\supp\nabla\eta\cap\supp\tilde{\eta}=\emptyset$\,.
With some $\tilde{s}\in(1/2,N/2)$ we have
$(\pdiv\rot\tilde{\eta}P^{q}_I,\rot\pdiv\tilde{\eta}P^{q+1}_J)\in\regqnom{\tilde{s}}$
and therefore we may utilize \paulytimeharmPzeroiv\, once more to estimate
$$\normb{C_{M,\eta}\loesom(CP^{q}_I,CP^{q+1}_J)}_{\Lzqqpe{}(\supp\nabla\eta)}
\leq c\normb{(\pdiv\rot\tilde{\eta}P^{q}_I,\rot\pdiv\tilde{\eta}P^{q+1}_J)}_{\Lzqqpeom{\tilde{s}}}\leq c\quad.$$
Hence we obtain
$$\normb{C_{M,\eta}\loesom\hat{\Pi}_{C}\FG}_{\Lzqqpe{}(\supp\nabla\eta)}\leq c\normb{\FG}_{\Lzqqpesom}\qquad.$$
Putting all together yields
$$\normb{\fg}_{\Lzqqpes}\leq c\normb{\FG}_{\Lzqqpesom}$$
and after inserting in \eqref{LomfJ}
$$\normb{L_{\omega,\fJ-1}\fg}_{\Lzqqpet}=\calO\big(|\omega|^{\fJ}\big)\normb{\FG}_{\Lzqqpesom}\qquad.$$
Now using \eqref{definiLomfJ} and \eqref{MaxEHFGfg}, \eqref{etaloesomL} we obtain
$$\eta\loesom\FGd=L_{\omega,\fJ-1}\fg+\sum_{j=-1}^{\fJ-1}(-\ie\omega)^j\Xi_j(\eta+C_{M,\eta}\loesom)\FGd\qquad.$$
Collecting terms and utilizing \eqref{deflomjk} gives

$$\eta\loesom\FG+(-\ie\omega)^\me\hat{\Pi}\FG
-\sum_{j=-1}^{\fJ-1}(-\ie\omega)^j\Xi_j\eta(\hat{\Pi}_{\reg}+\hat{\Pi}_C)\FG$$
\beq-\sum_{j=-1}^{\fJ-1}\sum_{k=-1}^{\fJ-1-j}(-\ie\omega)^{j+k}\Xi_jC_{M,\eta}{\calK}_k(\hat{\Pi}_{\reg}
+\hat{\Pi}_C)\FG\mylabel{etaloesomFGabsch}\eeq
$$=L_{\omega,\fJ-1}\fg
+\sum_{j=-1}^{\fJ-1}(-\ie\omega)^j\Xi_jC_{M,\eta}\loes_{\omega,\fJ-1-j}^{\calK}\FGd\qquad.$$
Moreover, the continuity of the operators $\Xi_j$ from $\Lzs$ to $\Lzt$
as well as Theorem \ref{asymsatzlokalzwei} yield
\begin{align*}
\normb{\Xi_jC_{M,\eta}\loes_{\omega,\fJ-1-j}^{\calK}\FGd}_{\Lzqqpet}
&\leq c\normb{\loes_{\omega,\fJ-1-j}^{\calK}\FGd}_{\Lzqqpe{}(\supp\nabla\eta)}\\
&\leq c|\omega|^{\fJ-j}\normb{\FGd}_{\Lzqqpesom}\qquad.
\end{align*}
Therefore the right hand side of \eqref{etaloesomFGabsch} behaves in the $\Lzqqpet$-norm like
$$\calO\big(|\omega|^\fJ\big)\normb{\FG}_{\Lzqqpesom}$$
and thus so does the left hand side. By \eqref{loesomkabschbound}
$$\normb{\eta\loes_{\omega,\fJ-1}^{\calK}\FG}_{\Lzqqpe{}(\omb)}=\calO\big(|\omega|^\fJ\big)\normb{\FG}_{\Lzqqpesom}$$
holds also for every bounded domain $\omb\subset\rN$\,.
Applying Lemma \ref{eindeutigasym} we find that the left hand side of \eqref{etaloesomFGabsch} equals
$\eta\loes_{\omega,\fJ-1}^{\calK}\FG$ and this yields finally
$$\normb{\eta\loes_{\omega,\fJ-1}^{\calK}\FG}_{\Lzqqpet}=\calO\big(|\omega|^\fJ\big)\normb{\FG}_{\Lzqqpesom}\qquad,$$
which completes our proof.
\end{proof}

\subsection{Fourth step}

We are ready to face the proof of the Main Theorem.
This last step may be done by an abstract argument similar to \cite[Lemma 12]{complete}.
Thus our aim is to identify two Banach spaces $X$ and $Y$\,, such that
all operators involved in our asymptotic belong to $B(X,Y)$\,, which denotes the space of all
bounded linear operators from $X$ to $Y$\,. Good candidates are 
$$X=\Lzqqpesom\qqtext{,}Y=\Lzqqpetom$$
as well as 
$$B(X,Y)=B_{s,t}:=B\big(\Lzqqpesom,\Lzqqpetom\big)$$ 
for some $s>t$\,.

By Theorem \ref{asymsatzweighted} and \eqref{deflomjk} the ingredients of our asymptotic are the linear
operators $\loesom$ and ${\calK}_j$\,, $j=-1,\dots,\fJ-1$\,, i.e. the operators
$$\Lambda\qtext{,}\Pi\qtext{,}\loesom\qqtext{and}\loes^j\qtext{,}j=0,\dots,\fJ$$
as well as the correction operators
$$\Gamma_j\qtext{,}j=0,\dots,\fJ-N\qquad.$$
Moreover, the correction operators $\Gamma_j$ map to $\corom{q}{j}$ and their coefficients
are given by the scalar products
$$s^{E,\ell}_{\sigma,m}\FG:=\skpboml{\FG}{\Esml}\qtext{,}s^{H,\ell}_{\sigma,m}\FG:=\skpboml{\FG}{\Hsml}$$
and the numbers $\xi^{\ell,\sigma,m}_{1,\gamma,n,-}$\,, $\zeta^{\ell,\sigma,m}_{1,\gamma,n,-}$\,.
(See the definitions and remarks around Theorem \ref{asymsatzlokal} and Theorem \ref{asymsatzlokalzwei}.)

Let us assume $\omega$ to be small enough and still for the moment the perturbation
$\Lambdad$ to be compactly supported. Then clearly $\Lambda\in B_{s,t}$ for all $s\geq t$\,.
By Lemma \ref{decolem}
\begin{align}\begin{split}
\Pi&\in B\big(\Lzqqpesom,\ronqtnom\times\dqpetnom\big)\subset B_{s,t}\qquad,\\
\Pi_{\reg}&\in B\big(\Lzqqpesom,\regqom{-1}{s}\big)\subset B_{s,t}
\end{split}\mylabel{PiPiregBst}\end{align}
for all weights $s\in(1-N/2,\infty)\ohne\pI$ and $t\leq s$\,, $t<N/2$\,,
since the natural embedding $\regqom{-1}{s}\subset\regqntom$ is continuous.
By \paulytimeharmtheofred \big(see also \paulytimeharmcorfred\big) for $s,-t>1/2$
\beq\loesom\in B\big(\Lzqqpesom,\Ronqtom\times\Dqpetom\big)\subset B_{s,t}\qquad.\mylabel{loesomBst}\eeq
By \paulystaticitloes
\begin{align}
\loes^j&\in B\big(\regqom{-1}{s},\regqntom\big)\mylabel{loesjBst}
\intertext{and therefore}
\loes^j\Pi_{\reg}&\in B_{s,t}\mylabel{loesjregBst}
\end{align}
for $s\in(j-N/2,\infty)\ohne\pI$ and $t\leq s-j$\,, $t<N/2-j$\,.
By Lemma \ref{loesEsmHgnlemma}
\beq\Esmk\,,\,\Hsmk\in\Lzqqpetom\mylabel{EHBst}\eeq
for $t<-k-\sigma-N/2$ and thus
\beq\corom{q}{j}=\Lin\set{\Lambda^\me\Esmk\,,\,\Lambda^\me\Hsnk}{k+2\sigma\leq j}\subset\Lzqqpetom\mylabel{corBst}\eeq
for all $t<-j-N/2$\,.
Moreover, $s^{E,k-2\sigma+1}_{\sigma,m}$\,, $s^{H,k-2\sigma+1}_{\sigma,m}$ and
$s^{E,0}_{\sigma,m}$\,, $s^{H,0}_{\sigma,m}$
are continuous linear functionals on $\Lzqqpesom$
for $s\in(j+1+N/2,\infty)\ohne\pI$ since again by Lemma \ref{loesEsmHgnlemma}
$$\Esmkmzse\,,\,\Hsmkmzse\,,\,\Esmn\,,\,\Hsmn\in\Lzqqpeom{<\sigma-k-1-\Nh}\subset\Lzqqpeom{-s}$$
for $0\leq2\sigma\leq k\leq j$\,. This yields for $s\in(j+1+N/2,\infty)\ohne\pI$ and $t<-j-N/2$
\beq\Gamma_j\in B\big(\Lzqqpesom,\corom{q}{j}\big)\subset B_{s,t}\qquad.\mylabel{GBst}\eeq

Now we weaken our assumptions on the perturbations $\Lambdad$\,, such that they do not have to be
compactly supported anymore. Thus let us assume $\Lambda$ to be $\tau$-$\pc{1}$-admissible.
By \paulytimeharmPzero\, $\pP$\,, the generalized point spectrum of $\calM$\,,
does not accumulate at zero for $\tau>(N+1)/2$\,, i.e. $\loesom$ is well defined for small $\omega$\,.
Furthermore, the following assertions still hold true:\\
\hspace*{1cm}\eqref{PiPiregBst} for $\tau>0$\,, $\tau>s+1-N/2$ and $\tau\geq-s-1$\\
\hspace*{1cm}\eqref{loesomBst} for $\tau>1$\\
\hspace*{1cm}\eqref{loesjBst} for $\tau>0$\,, $\tau>s-N/2$ and $\tau\geq j-s-1$\\
\hspace*{1cm}\eqref{loesjregBst} if \eqref{PiPiregBst} and \eqref{loesjBst}\\
\hspace*{1cm}\eqref{EHBst} for $\tau>k+\sigma$ and $\tau\geq N/2-1$\\
\hspace*{1cm}\eqref{corBst} for $\tau>j$ and $\tau\geq N/2-1$\\
\hspace*{1cm}\eqref{GBst} for $\tau>j+1$ and $\tau\geq N/2-1$\\
Collecting the values for $s,t$ and $\tau$ we obtain

\begin{lem}\mylabel{alleopsBsttau}
Let $\fJ\in\nzn$ and $s>\max\{1/2,\fJ+1-N/2\}$ with $s\notin\pI$ as well as
$t<\min\{-1/2,N/2-\fJ\}$\,, $t\leq s-\fJ$\,. Moreover, let
$$\tau>\max\big\{(N+1)/2,s+1-N/2\big\}\qquad.$$
Then for $j=0,\dots,\fJ-1$ and $i=0,\dots,\fJ-N$
$$\loesom\,,\,\Pi\,,\,\loesn\loes^j\Pi_{\reg}\,,\,\Gamma_i\in B_{s,t}$$
and thus also for $j=-1,\dots,\fJ-1$
$$\calK_j\,,\,\loes_{\omega,j}^{\calK}\in B_{s,t}\qquad.$$
\end{lem}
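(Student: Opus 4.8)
The statement is a bookkeeping result: it asserts that, under one common set of hypotheses on $\fJ$, $s$, $t$ and $\tau$, every operator entering the asymptotic of Theorem \ref{asymsatzweighted} lies simultaneously in the single space $B_{s,t}$. Accordingly the plan is not to prove anything genuinely new, but to verify that the hypotheses of the lemma imply, for each operator and each index in the required range, the individual mapping properties \eqref{PiPiregBst}--\eqref{GBst} already established above (first for compactly supported $\Lambdad$, then in the $\tau$-$\pc{1}$-admissible case with the $\tau$-thresholds collected in the displayed list directly preceding the lemma).

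First I would dispose of the index-independent operators. For $\Pi$ and $\Pi_{\reg}$ I invoke \eqref{PiPiregBst}: the hypotheses yield $s>1/2>1-N/2$, $s\notin\pI$, $t<-1/2<s$ and $t<N/2-\fJ\le N/2$, while $\tau>s+1-N/2$ is assumed and $\tau\ge-s-1$ is trivial since $-s-1<0<\tau$. For $\loesom$ I use \eqref{loesomBst}, which needs only $s,-t>1/2$ and $\tau>1$, all immediate from $s>1/2$, $t<-1/2$ and $\tau>(N+1)/2\ge2$.

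Next I treat the two families indexed by $j$. The point is that the local hypotheses of \eqref{loesjBst} and \eqref{GBst} become strictly more demanding as the index grows, so it suffices to check the extreme cases. For the Neumann terms I write $\loesn\loes^j=\Lambda^\me\loes^{j+1}$, so that $\loesn\loes^j\Pi_{\reg}\in B_{s,t}$ for $0\le j\le\fJ-1$ reduces, via \eqref{loesjregBst} and boundedness of $\Lambda^\me$, to $\loes^k\Pi_{\reg}\in B_{s,t}$ for $1\le k\le\fJ$; the worst case $k=\fJ$ requires exactly $s>\fJ-N/2$, $t\le s-\fJ$ and $t<N/2-\fJ$, precisely the stated hypotheses, and the accompanying condition $\tau\ge\fJ-s-1$ holds because $\tau>(N+1)/2$. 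For the correction operators I combine \eqref{GBst}, \eqref{EHBst} and \eqref{corBst}; the extreme index $i=\fJ-N$ demands $s>i+1+N/2=\fJ+1-N/2$ and $t<-i-N/2=N/2-\fJ$, again matching the hypotheses on the nose, together with $\tau>i+1$ and $\tau\ge N/2-1$, both dominated by $\tau>\max\{(N+1)/2,s+1-N/2\}$. I would also note here that the functionals $s^{E,\ell}_{\sigma,m}$, $s^{H,\ell}_{\sigma,m}$ fixing the coefficients of $\Gamma_i$ are continuous on $\Lzqqpesom$ under the same weight condition, since by Lemma \ref{loesEsmHgnlemma} the forms $\Esml$, $\Hsml$ lie in $\Lzqqpeom{-s}$.

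Finally, with all the building blocks placed in $B_{s,t}$, I conclude for the composites: each $\calK_j$ is by definition a finite sum of $-\Pi$, $\loesn\loes^j\Pi_{\reg}$ and (for $j\ge N-1$) $\Gamma_{j-N+1}$, hence lies in $B_{s,t}$, and then $\loes_{\omega,j}^{\calK}=\loesom-\sum_{k=-1}^{j}(-\ie\omega)^k\calK_k$ is a finite linear combination of members of $B_{s,t}$, so it too belongs to $B_{s,t}$. The only genuinely delicate point is the accountancy of the three $\tau$-thresholds --- $\tau>(N+1)/2$ for the mere existence of $\loesom$, $\tau>s+1-N/2$ for the decompositions, and the index-dependent thresholds $\tau\ge\fJ-s-1$ and $\tau>\fJ-N+1$ --- and checking that $\tau>\max\{(N+1)/2,s+1-N/2\}$ dominates all of them uniformly across the index range; this I verify using $s>\fJ+1-N/2$, which bounds $\fJ-s$ from above by $N/2-1$ and thereby controls both index-dependent thresholds at once.
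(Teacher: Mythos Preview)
Your proposal is correct and follows essentially the same approach as the paper: the paper does not give a separate proof for this lemma at all but merely writes ``Collecting the values for $s,t$ and $\tau$ we obtain'' after the list of mapping properties \eqref{PiPiregBst}--\eqref{GBst} and their $\tau$-thresholds. Your explicit check of the extreme indices $k=\fJ$ and $i=\fJ-N$, together with the observation $\loesn\loes^j=\Lambda^\me\loes^{j+1}$, makes the bookkeeping fully transparent and is exactly the verification the paper leaves to the reader.
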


Now we approximate $\Lambdad$ by compactly supported perturbations. 
For this purpose we define the cut-off functions
$\varphi_n:=(1-\mbox{\boldmath$\eta$})(r/n)$\,, $n\in\nz$ and
$$\Lambda_n:=\id+\varphi_n\Lambdad\qquad.$$
(We note $\restr{\varphi_n}{U_n}\equiv1$ and $\restr{\varphi_n}{A_{2n}}\equiv0$\,.)
Then $\Lambda_n$ converges to $\Lambda$ for $n\to\infty$ pointwise a.e.
and also in the operator norm of $B_{t,t}$ for all $t\in\rz$ since $\tau>0$\,.
Moreover, if $\tau>s-t$ this convergence also holds true in $B_{t,s}$\,.

From now on all operators, forms and numbers carrying an index $n$
correspond to the truncated transformation $\Lambda_n$\,.

By a short calculation and a regularity result,
e.g. \cite[Korollar 3.8 (ii)]{paulydiss}, we obtain

\begin{lem}\mylabel{gemdefbereich}
Let $\tau>0$\,. For all $s\in\rz$ and $r_2\leq n\in\nz$
\begin{align*}
\ronqsom\cap\eps^\me\dqsom&\subset\ronqsom\cap\eps_n^\me\dqsom\qquad,\\
\mu^\me\ronqpesom\cap\dqpesom&\subset\mu_n^\me\ronqpesom\cap\dqpesom
\end{align*}
hold with continuous embeddings, whose norms do not depend on $n$\,.
\end{lem}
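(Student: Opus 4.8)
The plan is to reduce both inclusions to a single commutator computation on the far field $A_{r_0}$, where all coefficients are $\pc{1}$, and to read off the uniformity in $n$ from the supports of the cut-offs together with the boundedness and decay of $\epsd$, $\mud$. First I would note that the two inclusions are completely symmetric: the first modifies only $\pdiv(\eps_n\,\cdot)$, the second only $\rot(\mu_n\,\cdot)$. Moreover the $\rot$-component $\ronqsom$ in the first space (resp. the $\pdiv$-component $\dqpesom$ in the second) is literally the \emph{same} on both sides and contributes the identity embedding. Hence it suffices to treat the electric line, i.e. to estimate $\pdiv(\eps_n E)$ for $E\in\ronqsom\cap\eps^\me\dqsom$. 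Writing $\psi_n:=1-\varphi_n=\mbox{\boldmath$\eta$}(r/n)$, the hypothesis $r_2\leq n$ guarantees $\supp\psi_n\subset\ol{A_n}\subset A_{r_0}$ and $\supp\psi_n'\subset\ol{Z_{n,2n}}$, so $\eps_n$ differs from $\eps$ only on $A_{r_0}$, where $\eps$ (hence $\epsd$ and $\eps^\me$) is $\pc{1}$ and boundedly invertible. From $\eps_n E=\eps E-\psi_n\epsd E$ and the commutator relation $C_{\pdiv,\psi_n}=\psi_n'\tilde{T}$ of the introduction (with $\psi_n'(r)=\frac{1}{n}\mbox{\boldmath$\eta$}'(r/n)$) I would expand
$$\pdiv(\eps_n E)=\pdiv(\eps E)-\pdiv(\psi_n\epsd E)=\pdiv(\eps E)-\psi_n\,\pdiv(\epsd E)-\frac{1}{n}\mbox{\boldmath$\eta$}'(r/n)\,\tilde{T}(\epsd E)\quad.$$

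Next I would give meaning to $\pdiv(\epsd E)$ and control it uniformly. Since $\eps E\in\dqsom$ and $\eps$ is $\pc{1}$ and invertible on $A_{r_0}$, the regularity result \cite[Korollar 3.8 (ii)]{paulydiss} yields that $E$ lies in the local domain of $\pdiv$ over $A_{r_0}$, together with a norm bound depending only on the $\pc{1}$-data of $\eps$. Consequently $\epsd E=\eps E-E$ lies in the domain of $\pdiv$ over $A_{r_0}$ and $\pdiv(\epsd E)=\pdiv(\eps E)-\pdiv E$; its weighted $\Lzqsom$-norm over $A_{r_0}$ is bounded by $c\,\normb{E}_{\ronqsom\cap\eps^\me\dqsom}$ with $c$ independent of $n$ (the commutator piece of the Leibniz rule has coefficients controlled by $\sup|\p\epsd|$, and $\tau>0$ makes these bounded).

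Then I would estimate the three terms of the display in the weighted graph norm. The first term $\pdiv(\eps E)$ is part of the left-hand norm and is $n$-independent. For $\psi_n\,\pdiv(\epsd E)$ the bound $|\psi_n|\leq1$ gives $\normb{\psi_n\pdiv(\epsd E)}_{\Lzqsom}\leq\normb{\pdiv(\epsd E)}_{\Lzqsom}\leq c\,\normb{E}_{\ronqsom\cap\eps^\me\dqsom}$, uniformly in $n$, by the previous paragraph. For the last term I would use that $\tilde{T}$ is pointwise bounded and that $|\frac{1}{n}\mbox{\boldmath$\eta$}'(r/n)|\leq c/n\leq c/r_2$ on $\ol{Z_{n,2n}}$, so that $\normb{\frac{1}{n}\mbox{\boldmath$\eta$}'(r/n)\tilde{T}(\epsd E)}_{\Lzqsom}\leq\frac{c}{n}\sup|\epsd|\,\normb{E}_{\Lzqsom}$, again uniform in $n$. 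Together with the trivial estimate $\normb{\eps_n E}_{\Lzqsom}\leq(1+\sup|\epsd|)\normb{E}_{\Lzqsom}$ and the fact that the $\rot$-component is unchanged, this produces the claimed inclusion with an embedding constant depending only on $\eps$ (through $\sup|\epsd|$, $\sup|\p\epsd|$ and $\tau$), not on $n$. The magnetic inclusion follows verbatim, replacing $\pdiv$, $\tilde{T}$, $\eps$, $\epsd$ by $\rot$, $\tilde{R}$, $\mu$, $\mud$ and using $C_{\rot,\psi_n}=\psi_n'\tilde{R}$, the roles of the unchanged component ($\dqpesom$) and the modified one ($\mu_n^\me\ronqpesom$) being interchanged.

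The genuine difficulty is concentrated in the second paragraph: one must justify that $E$ (equivalently $\epsd E$) lies in the local domain of $\pdiv$ on $A_{r_0}$ and that the passage from $\pdiv(\eps E)$ to $\pdiv E$ costs only a constant depending on $\eps$ and not on $n$. This is exactly what the cited regularity result and the accompanying Leibniz/commutator computation supply; everything else is elementary bookkeeping with the supports of $\psi_n$, $\psi_n'$ and the decay exponent $\tau$.
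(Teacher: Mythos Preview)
Your proposal is correct and matches the paper's approach exactly: the paper itself gives no detailed argument, only the sentence ``By a short calculation and a regularity result, e.g.\ \cite[Korollar 3.8 (ii)]{paulydiss}, we obtain'', and you have supplied precisely that short calculation (the commutator expansion $\pdiv(\eps_nE)=\pdiv(\eps E)-\psi_n\pdiv(\epsd E)-\psi_n'\tilde{T}(\epsd E)$ on $A_{r_0}$) together with the same regularity citation to justify $\pdiv E\in\Lz{q-1}{s,\loc}(A_{r_0})$. The identification of the genuine difficulty --- that one must first know $E$ (hence $\epsd E$) lies in the local domain of $\pdiv$ on $A_{r_0}$ before the Leibniz/commutator bookkeeping becomes legitimate --- is exactly why the paper invokes the external regularity result, and your uniformity-in-$n$ argument via $|\psi_n|\leq1$, $|\psi_n'|\leq c/n$, and the $n$-independent $\pc{1}$-bounds on $\epsd$ is the intended one.
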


Let $s,t$ and $\tau$ satisfy the assumptions of Lemma \ref{alleopsBsttau} and $\tau>s-t$\,.
Applying the latter lemma our static operators ${}_\eps\statMax^q_{s-1}$ and ${}_{\eps_n}\statMax^q_{s-1}$
from \paulystatictheoaltmax\, are well defined
on their common domain of definition $D({}_\eps\MAX^q_{s-1})$\,.
A long but straight forward computation shows that ${}_{\eps_n}\statMax^q_{s-1}$ converges to
${}_\eps\statMax^q_{s-1}$ in the operator norm of $B\big(D({}_\eps\MAX^q_{s-1}),\bWom{q}{s}\big)$\,.
Therefore also the inverse operators converge in the operator norm and
clearly the same holds true for ${}^\mu\statMax^{q+1}_{s-1}$ and ${}^{\mu_n}\statMax^{q+1}_{s-1}$\,.

Since $\loesn$ consists of the inverses of ${}_\eps\statMax^q_{s-1}$ and ${}^\mu\statMax^{q+1}_{s-1}$
also ${}_n\loesn$ converges to $\loesn$ in the operator norm. Thus ${}_n\loes$
converges to $\loes$ in the operator norm and the same holds true for their powers.
By Lemma \ref{EsmHgnlemma} and Lemma \ref{loesEsmHgnlemma}
${}_n\Esmk$ resp. ${}_n\Hsmk$ converge to $\Esmk$ resp. $\Hsmk$ in the corresponding $\Lzqqpetom$\,.
Looking at the representations in Lemma \ref{loesEsmHgnlemma} the coefficients
${}_n\xi^{k,\sigma,}_{}$ and ${}_n\zeta^{k,\sigma,}_{}$ of ${}_n\Esmk$ and ${}_n\Hsmk$ converge to
$\xi^{k,\sigma,}_{}$ and $\zeta^{k,\sigma,}_{}$ in $\cz$\,. Hence also the correction operators
${}_n\Gamma_j$ converge to $\Gamma_j$ in the operator norm.
Furthermore, it follows that the projections $\Pi_n$ and $\Pi_{\reg,n}$
converge to $\Pi$ and $\Pi_{\reg}$ in the respective operator norms.

It remains to discuss the time-harmonic solution operator.
For $\omega$ small enough and $\FG\in\Lzqqpeom{>\frac{1}{2}}$
$$\EH:=\loesom\FG\,,\,\EHn:={}_n\loesom\FG\in\Ronqkmehom\times\Dqpekmehom\qquad.$$
Consequently the difference form $\eh:=\EH-\EHn$ satisfies the radiation condition and solves
$$(M+\ie\omega\Lambda)\eh=\ie\omega(\Lambda_n-\Lambda)\EHn\qquad.$$
For $\tau>1$ we have $(\Lambda_n-\Lambda)\EHn\in\Lzqqpeom{>\frac{1}{2}}$ and thus
$$\eh=\ie\omega\loesom(\Lambda_n-\Lambda)\EHn\qquad,$$
i.e.
$$\loesom-\,{}_n\loesom=\ie\omega\loesom(\Lambda_n-\Lambda)\,{}_n\loesom\qquad.$$
\big(Interchanging $\Lambda$ and $\Lambda_n$ yields
$\loesom(\Lambda_n-\Lambda)\,{}_n\loesom=\,{}_n\loesom(\Lambda_n-\Lambda)\loesom$\,.\big)
Since $\Lambda_n\to\Lambda$ in $B_{t,s}$ we obtain ${}_n\loesom\to\loesom$ in $B_{s,t}$\,.

Summing up and using Theorem \ref{asymsatzweighted} we finally achieve

\begin{lem}\mylabel{opskonvergenz}
Let $\fJ\in\nz$ and $s\in(\fJ+1/2,\infty)\ohne\pI$ as well as
$t<\min\{-1/2,N/2-\fJ-2\}$\,. Moreover, let
$$\tau>\max\big\{(N+1)/2,s-t\big\}\qquad.$$
Then for $\omega\in\czpomd\ohne\{0\}$ and $j=-1,\dots,\fJ-1$
\begin{align*}
{}_n\loesom&\xrightarrow{n\to\infty}\loesom&&\text{in}&&B_{s,t}
&&,&\Lambda_n&\xrightarrow{n\to\infty}\Lambda&&\text{in}&&B_{t,s}\qquad,\\
{}_n{\calK}_j&\xrightarrow{n\to\infty}{\calK}_j&&\text{in}&&B_{s,t}
\end{align*}
as well as for all $n$
$$\loesom-\,{}_n\loesom=\ie\omega\loesom(\Lambda_n-\Lambda)\,{}_n\loesom$$
and
$$\norm{{}_n\loes_{\omega,\fJ-1}^{\calK}}_{B_{s,t}}=\calO\big(|\omega|^\fJ\big)\qquad.$$
\end{lem}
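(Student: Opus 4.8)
The plan is to assemble the statement from the perturbation facts collected in the discussion preceding the lemma, treating the three convergence assertions, the resolvent identity, and the asymptotic bound in turn. Throughout, $\omega$ is fixed below the (for the moment $n$-dependent) threshold of \paulytimeharmPzero.

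First I would establish the convergences. Since $\Lambda_n-\Lambda=(\varphi_n-1)\Lambdad$ is supported in $\{|x|>n\}$ and $\p^\alpha\Lambdad$ decays like $r^{-\tau-|\alpha|}$, the hypothesis $\tau>s-t$ forces $\Lambda_n\to\Lambda$ in $B_{t,s}$ (and in $B_{t,t}$ for $\tau>0$). Next, Lemma \ref{gemdefbereich} furnishes a common domain $D({}_\eps\MAX^q_{s-1})$ with $n$-independent embedding constants, on which ${}_{\eps_n}\statMax^q_{s-1}\to{}_\eps\statMax^q_{s-1}$ in operator norm; inverting by a Neumann-series perturbation argument (the limit being a topological isomorphism by \paulystatictheoaltmax) yields ${}_n\loesn\to\loesn$, hence ${}_n\loes\to\loes$ and ${}_n\loes^j\to\loes^j$ in operator norm. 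Feeding this into Lemma \ref{EsmHgnlemma} and Lemma \ref{loesEsmHgnlemma} gives ${}_n\Esmk\to\Esmk$ and ${}_n\Hsmk\to\Hsmk$ in the relevant $\Lzqqpetom$ together with convergence of the coefficients ${}_n\xi,{}_n\zeta$; consequently the correction operators ${}_n\Gamma_j$ and the projections $\Pi_n,\Pi_{\reg,n}$ converge in operator norm, and combining all of these with the definition of $\calK_j$ produces ${}_n\calK_j\to\calK_j$ in $B_{s,t}$ for each $j=-1,\dots,\fJ-1$.

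For the resolvent identity I would argue directly. For $\FG\in\Lzqqpeom{>\frac{1}{2}}$ the difference $\eh:=\loesom\FG-{}_n\loesom\FG$ satisfies the radiation condition and solves $(M+\ie\omega\Lambda)\eh=\ie\omega(\Lambda_n-\Lambda){}_n\loesom\FG$; since $\tau>1$ the right-hand side lies in $\Lzqqpeom{>\frac{1}{2}}$, so applying $\loesom$ gives $\loesom-{}_n\loesom=\ie\omega\loesom(\Lambda_n-\Lambda){}_n\loesom$, as claimed. Reading this as $\loesom=(\id+\ie\omega\loesom(\Lambda_n-\Lambda)){}_n\loesom$ and noting that $\loesom(\Lambda_n-\Lambda)\to0$ in $B_{t,t}$ (again by $\tau>s-t$), the factor $\id+\ie\omega\loesom(\Lambda_n-\Lambda)$ is boundedly invertible on $\Lzqqpetom$ for large $n$ by Neumann series; this simultaneously bounds $\normb{{}_n\loesom}_{B_{s,t}}$ uniformly in $n$ and yields ${}_n\loesom\to\loesom$ in $B_{s,t}$.

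It remains to prove the asymptotic bound. For each fixed $n$ the perturbation $\Lambda_n$ is $\tau$-$\pc{1}$-admissible with compact support in $\{|x|<2n\}$, so Theorem \ref{asymsatzweighted} applies to the truncated problem; since the definitions $\calK_{-1}=-\Pi$, $\calK_j=\loesn\loes^j\Pi_{\reg}$ and $\calK_j=\loesn\loes^j\Pi_{\reg}+\Gamma_{j-N+1}$ make ${}_n\loes_{\omega,\fJ-1}^{\calK}$ coincide exactly with the operator whose norm that theorem estimates, one obtains $\normb{{}_n\loes_{\omega,\fJ-1}^{\calK}}_{B_{s,t}}=\calO\big(|\omega|^\fJ\big)$. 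The main obstacle is that this $\calO$ must be uniform in $n$: the constant and the frequency threshold $\hat{\omega}$ entering Theorem \ref{asymsatzweighted} and \paulytimeharmPzero have to be controlled independently of $n$. I would dispatch this by verifying that these constants depend on $\Lambda_n$ only through the fixed rate $\tau$ and the $\pc{1}$-data of $\varphi_n\Lambdad$, which are uniformly bounded because $\nabla\varphi_n$ is of order $n^{-1}$ and $\Lambdad$ is admissible, together with the $n$-independent embedding constants of Lemma \ref{gemdefbereich} and the uniform bounds already produced above. This uniform-in-$n$ control is exactly what will let the bound pass to the limit $\Lambda$ in the ensuing proof of the Main Theorem.
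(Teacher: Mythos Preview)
Your treatment of the convergences $\Lambda_n\to\Lambda$, ${}_n\loesn\to\loesn$, ${}_n\loes^j\to\loes^j$, ${}_n\Esmk\to\Esmk$, ${}_n\Gamma_j\to\Gamma_j$, $\Pi_n\to\Pi$ and of the resolvent identity $\loesom-{}_n\loesom=\ie\omega\loesom(\Lambda_n-\Lambda){}_n\loesom$, together with the deduction ${}_n\loesom\to\loesom$ in $B_{s,t}$, matches the paper's argument essentially line for line; that part is fine.

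The last paragraph, however, is aimed at the wrong target. The lemma asserts only that \emph{for each fixed} $n$ one has $\norm{{}_n\loes_{\omega,\fJ-1}^{\calK}}_{B_{s,t}}=\calO(|\omega|^\fJ)$; no uniformity in $n$ is claimed, and the paper obtains this simply by invoking Theorem~\ref{asymsatzweighted} for the compactly supported perturbation $\Lambda_n$. Your attempt to track all constants through the proof of Theorem~\ref{asymsatzweighted} and \paulytimeharmPzero\ to secure $n$-uniformity is both unnecessary here and not really substantiated (``I would dispatch this by verifying\dots'' is not a proof). The passage to the limit in the Main Theorem is \emph{not} achieved by an $n$-uniform bound at all: it is the abstract Lemma~\ref{genres} (modelled on \cite[Lemma~12]{complete}) whose hypotheses are precisely the per-$n$ asymptotics together with the convergences $\cN^{(n)}\to0$, $\cK_j^{(n)}\to\cK_j$ and the identity $\cloesom-\cloesom^{(n)}=\omega\cloesom\cN^{(n)}\cloesom^{(n)}$. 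The extra factor $\omega$ in that identity is what makes the limit go through without uniform constants. So drop the uniformity discussion; once you do, your proof coincides with the paper's.
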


Now we have to modify the result \cite[Lemma 12]{complete} slightly.

\begin{lem}\mylabel{genres}
Let $\fJ\in\nzn$\,, $\hat{\omega}>0$\,, $\omega\in\czpomd\ohne\{0\}$ and $X,Y$ be Banach spaces.
Moreover, let $\cloesom$\,, $\cloesom^{(n)}$ and $\cK_j$\,, $\cK_j^{(n)}$
for $j=-1,\dots,\fJ-1$ resp. $\cN^{(n)}$ for $n\in\nz$
be families of bounded linear operators from $X$ to $Y$ resp. $Y$ to $X$\,.
Furthermore, let
$$\cN^{(n)}\to0\qqtext{,}\cK_j^{(n)}\to\cK_j\qqtext{,}j=-1,\dots,\fJ-1$$
with convergence in the respective operator norms and
\beq\cloesom-\cloesom^{(n)}=\omega\cloesom\cN^{(n)}\cloesom^{(n)}\mylabel{omcalN}\eeq
as well as for all $n$
$$\cloesom^{(n)}-\sum_{j=-1}^{\fJ-1}\cK_j^{(n)}=\calO\big(|\omega|^\fJ\big)$$
as $\omega\to0$ with respect to the operator norm in $B(X,Y)$\,. Then also
$$\cloesom-\sum_{j=-1}^{\fJ-1}\cK_j=\calO\big(|\omega|^\fJ\big)$$
holds as $\omega\to0$ with respect to the operator norm in $B(X,Y)$\,.
\end{lem}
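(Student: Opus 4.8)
The plan is to transfer the asymptotic from the approximating family $\cloesom^{(n)}$ to the limit operator $\cloesom$ by passing to the limit $n\to\infty$ at each fixed frequency $\omega$, while exploiting that the remainder estimate for the approximants holds uniformly in $n$. Throughout I would abbreviate
$$\calR_\omega:=\cloesom-\sum_{j=-1}^{\fJ-1}\cK_j\qqtext{,}\calR_\omega^{(n)}:=\cloesom^{(n)}-\sum_{j=-1}^{\fJ-1}\cK_j^{(n)}\qquad.$$
Subtracting these and inserting the perturbation identity \eqref{omcalN} gives the purely algebraic identity
$$\calR_\omega=\calR_\omega^{(n)}+\omega\,\cloesom\cN^{(n)}\cloesom^{(n)}+\sum_{j=-1}^{\fJ-1}(\cK_j^{(n)}-\cK_j)\qquad,$$
valid for every $n$ and every $\omega$. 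The whole argument rests on showing that, for fixed $\omega$, the last two terms tend to zero in $B(X,Y)$ as $n\to\infty$, together with a bound on $\calR_\omega^{(n)}$ that is uniform in $n$.

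First I would fix a small $\omega\neq0$ and control $\cloesom^{(n)}$ uniformly in $n$. Rewriting \eqref{omcalN} as $(\id+\omega\cloesom\cN^{(n)})\cloesom^{(n)}=\cloesom$ and using the operator-norm convergence $\cN^{(n)}\to0$, the operator $\id+\omega\cloesom\cN^{(n)}$ on $Y$ becomes invertible for all sufficiently large $n$, with uniformly bounded inverse by a Neumann series. Hence $\cloesom^{(n)}=(\id+\omega\cloesom\cN^{(n)})^{-1}\cloesom$ stays bounded in $n$ (indeed $\cloesom^{(n)}\to\cloesom$). Consequently the middle term satisfies $\normb{\omega\cloesom\cN^{(n)}\cloesom^{(n)}}\le|\omega|\,\norm{\cloesom}\,\norm{\cN^{(n)}}\,\sup_n\norm{\cloesom^{(n)}}\to0$, while $\sum_j(\cK_j^{(n)}-\cK_j)\to0$ by hypothesis. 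The identity above then shows $\calR_\omega^{(n)}\to\calR_\omega$ in the operator norm of $B(X,Y)$.

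It remains to read off the estimate. Since $\cloesom^{(n)}-\sum_j\cK_j^{(n)}=\calO(|\omega|^\fJ)$ holds uniformly in $n$ — which is exactly what Lemma \ref{opskonvergenz} delivers via the $n$-independent operator bounds established in the fourth step — there is a single constant $c$ and some $\hat\omega>0$ with $\norm{\calR_\omega^{(n)}}_{B(X,Y)}\le c|\omega|^\fJ$ for all $n$ and all $0<|\omega|\le\hat\omega$. Because $\calR_\omega^{(n)}\to\calR_\omega$ in operator norm and the norm is continuous, letting $n\to\infty$ yields $\norm{\calR_\omega}_{B(X,Y)}\le c|\omega|^\fJ$ for $0<|\omega|\le\hat\omega$, i.e. $\cloesom-\sum_{j=-1}^{\fJ-1}\cK_j=\calO(|\omega|^\fJ)$, as claimed.

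The main obstacle, and the reason the naive strategy of simply fixing one large $n$ fails, is the interplay of the two limits $n\to\infty$ and $\omega\to0$: the correction term $\sum_j(\cK_j^{(n)}-\cK_j)$ is only small for large $n$ and carries no gain in $\omega$, whereas the target bound must decay like $|\omega|^\fJ$. This forces one to take $n\to\infty$ first, at fixed $\omega$, and to rely crucially on the fact that the approximant remainders are $\calO(|\omega|^\fJ)$ with a constant \emph{independent of $n$}, so that the estimate is preserved in the limit. Verifying this uniformity, and the uniform invertibility of $\id+\omega\cloesom\cN^{(n)}$ for large $n$, is the only genuinely quantitative point; everything else is the algebraic identity combined with continuity of the operator norm.
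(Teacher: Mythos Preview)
Your argument is correct and follows the natural line of the proof the paper defers to \cite[Lemma 12]{complete}: show $\calR_\omega^{(n)}\to\calR_\omega$ in $B(X,Y)$ at each fixed $\omega$ via the perturbation identity, then inherit the $\calO(|\omega|^\fJ)$ bound in the limit. You also correctly isolate the one genuine subtlety, namely that the bound $\norm{\calR_\omega^{(n)}}\le c\,|\omega|^\fJ$ must hold with a constant \emph{independent of $n$}; this uniformity is not explicit in the abstract hypothesis but is supplied in the application by the $n$-independent constants underlying Theorem \ref{asymsatzweighted} (the $\Lambda_n$ are uniformly $\tau$-admissible and converge to $\Lambda$), which is exactly how the paper intends the lemma to be used.

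One remark on the factor $\omega$ in \eqref{omcalN}, which the paper flags as ``indispensable'' in the Maxwell case: in your fixed-$\omega$ limit this factor plays no special role, but in the alternative strategy closer to \cite{complete}---fix one large $n$ and invert $\id-\omega\cN^{(n)}\cloesom^{(n)}$ on $X$ by a Neumann series \emph{uniformly for small $\omega$}---it is essential, because $\cloesom^{(n)}=\calO(|\omega|^{-1})$ owing to the $j=-1$ term, and only the extra $\omega$ makes $\norm{\omega\cN^{(n)}\cloesom^{(n)}}$ small uniformly in $\omega$. Both routes lead to the same conclusion once the uniform-in-$n$ remainder estimate is available.
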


\begin{proof}
The proof is quite similar to the one of \cite[Lemma 12]{complete} and hence may be omitted here.
We just note that in the Maxwell case it is indispensable that \eqref{omcalN}
contains a term $\omega\cN^{(n)}$ contrary to just $\cN^{(n)}$
in the case of the Helmholtz equation or the equations of linear elasticity.
\end{proof}

We are ready for the

\vspace*{2mm}
\begin{proof}{\bf of the Main Theorem }
If we set $X:=\Lzqqpesom$ and $Y:=\Lzqqpetom$\,, then a combination
of Lemma \ref{opskonvergenz} and Lemma \ref{genres} yields our desired asymptotic
$$\norm{\loes_{\omega,\fJ-1}^{\calK}}_{B_{s,t}}=\calO\big(|\omega|^\fJ\big)\qquad,$$
which proves the main theorem for $\fJ\geq1$\,.
If $\fJ=0$ we have by Lemma \ref{decolem} and \eqref{loesompi}
$$\loes_{\omega,-1}^{\calK}=\loesom+(-\ie\omega)^\me\Pi=\loesom\Pi_{\reg}\qquad.$$
Hence \paulytimeharmPzeroiv\, yields the stated assertion.

To prove the first remark we compute
$M\loes_{\omega,j}^{\calK}=-\ie\omega\Lambda\loes_{\omega,j-1}^{\calK}$ for $0\leq j\leq\fJ-1$ and
$$M\loes_{\omega,-1}^{\calK}=M\loesom=M\loesom\Pi_{\reg}=\Pi_{\reg}
-\ie\omega\Lambda\loesom\Pi_{\reg}\qquad.$$
Moreover, we set $\tilde{s}:=s$\,, $\tilde{t}:=t$ or $\tilde{t}:=t+1$\,.
Then for $\fJ\geq2$ we may utilize the main theorem with $\fJ-1$\,, $\tilde{s}$\,, $\tilde{t}$
and for $\fJ\in\{0,1\}$ once more \paulytimeharmPzeroiv, which completes the proof of Remark A.
\end{proof}

Using the Main Theorem as well as Definitions \ref{asymsatzlokaldef} and \ref{asymsatzlokalzweidef}
and the corresponding remarks we note two final observations concerning the correction operators.

\begin{rem}\mylabel{remcoropreg}
Let $\fJ\in\nzn$ and $s,t,\tau$ be as in the Main Theorem. 
Moreover, let $\FG$ be an element of $\regqnsom$\,.
\begin{itemize}
\item[\bf(i)] For $\fJ\leq N$ we have the noncorrected asymptotic
\beq\normB{\big(\loesom-\sum_{j=0}^{\fJ-1}(-\ie\omega)^j\loesn\loes^{j}\big)\FG}_{\Lzqqpetom}
=\calO\big(|\omega|^\fJ\big)\normb{\FG}_{\Lzqqpesom}\quad.\mylabel{noncorasym}\eeq
\item[\bf(ii)] For $\fJ\geq N+1$ we know by Theorem \ref{Regsatz} and Lemma \ref{Regortho} that the noncorrected asymptotic
\eqref{noncorasym} holds true for $\FG\in\regqJsom$\,, i.e. for $\FG$ perpendicular \big(in $\Lzqqpeom{}$\big)
to all special growing forms $\Lambda^\me\Esmk,\Lambda^\me\Hsnk\in\Lzqqpeom{-s}$ with $1\leq k\leq\fJ$\,.
Albeit this condition is sufficient for \eqref{noncorasym} to hold it is not sharp.
Of course, the noncorrected asymptotic \eqref{noncorasym} holds true,
if and only if we have $\Gamma_j\FG=\hat{\Gamma}_{j-1}\FG=0$ for all $j=1,\dots,\fJ-N$\,,
i.e. $\FG$ must be perpendicular only to all $\Lambda^\me\Esmk,\Lambda^\me\Hsnk\in\Lzqqpeom{-s}$ with $1\leq k\leq\fJ-N$\,.
\end{itemize}
\end{rem}

\begin{rem}\mylabel{remcorop}
Let $\fJ\in\nzn$ and $s,t,\tau$ be as in the Main Theorem as well as $\FG$ be an element of $\Lzqqpesom$\,.
\begin{itemize}
\item[\bf(i)] For $\fJ\leq N-1$ we have the noncorrected asymptotic
\begin{align}
\begin{split}
&\normB{\big(\loesom+(-\ie\omega)^\me\Pi-\sum_{j=0}^{\fJ-1}(-\ie\omega)^j\loesn\loes^{j}\Pi_{\reg}\big)\FG}_{\Lzqqpetom}\\
&\qquad\qquad=\calO\big(|\omega|^\fJ\big)\normb{\FG}_{\Lzqqpesom}\,.
\end{split}\mylabel{noncorasymtwo}
\end{align}
\item[\bf(ii)] For $\fJ\geq N$ the noncorrected asymptotic \eqref{noncorasymtwo} holds true,
if and only if we have $\Gamma_j\FG=0$ for all $j=0,\dots,\fJ-N$\,,
i.e. if and only if $\FG$ is perpendicular to all $\Lambda^\me\Esmk,\Lambda^\me\Hsnk\in\Lzqqpeom{-s}$ with $0\leq k\leq\fJ-N$\,.
\end{itemize}
\end{rem}

\begin{acknow}
This research was supported by the {\it Deutsche Forschungsgemeinschaft}
via the project {\sf `We 2394: Untersuchungen der Spektralschar verallgemeinerter
Maxwell-Operatoren in unbeschr\"ankten Gebieten'}.

The author is particularly indebted to his academic teachers Norbert Weck and Karl-Josef Witsch 
for introducing him to the field and many fruitful discussions.
\end{acknow}

\end{document}